\newtheorem{thm}{Theorem}[section]
\newtheorem{lem}[thm]{Lemma}
\newtheorem{prop}[thm]{Proposition}
\newtheorem{df}[thm]{Definition}
\newtheorem{conj}[thm]{Conjecture}
\newtheorem{cor}[thm]{Corollary}
\newtheorem{rem}[thm]{Remark}
\newtheorem{exam}[thm]{Example}
\numberwithin{equation}{section}
\title{On Orbits of Order Ideals of Minuscule Posets}
\author{David B Rush \and XiaoLin Shi}
\address{Department of Mathematics, Massachusetts Institute of Technology, Cambridge, MA}
\email{dbr@mit.edu}
\address{Department of Mathematics, Massachusetts Institute of Technology, Cambridge, MA}
\email{dannyshi@mit.edu}
\date{\today}
\begin{document}

\begin{abstract}

An action on order ideals of posets considered by Fon-Der-Flaass is analyzed in the case of posets arising from minuscule representations of complex simple Lie algebras.  For these minuscule posets, it is shown that the Fon-Der-Flaass action exhibits the cyclic sieving phenomenon, as defined by Reiner, Stanton, and White.  A uniform proof is given by investigation of a bijection due to Stembridge between order ideals of minuscule posets and fully commutative Weyl group elements.  This bijection is proven to be equivariant with respect to a conjugate of the Fon-Der-Flaass action and an arbitrary Coxeter element.  

If $P$ is a minuscule poset, it is shown that the Fon-Der-Flaass action on order ideals of the Cartesian product $P \times [2]$ also exhibits the cyclic sieving phenomenon, only the proof is by appeal to the classification of minuscule posets and is not uniform.  

\end{abstract}

\maketitle

\section{Introduction} 

The Fon-Der-Flaass action on order ideals of a poset has been the subject of extensive study since it was introduced in its original form on hypergraphs by Duchet in 1974 \cite{Duchet}.  In this article, we identify a disparate collection of posets characterized by properties from representation theory  -- the minuscule posets --  that exhibits consistent behavior under the Fon-Der-Flaass action.  We illustrate the commonality via the cyclic sieving phenomenon of Reiner-Stanton-White \cite{Reiner}, which provides a unifying framework for organizing combinatorial data on orbits arising from cyclic actions.  

If $P$ is a poset, and $J(P)$ is the set of order ideals of $P$, partially ordered by inclusion, the Fon-Der-Flaass action $\Psi$ maps an order ideal $I \in J(P)$ to the order ideal $\Psi(I)$ whose maximal elements are the minimal elements of $P \setminus I$.  Since $\Psi$ is invertible, it generates a cyclic group $\langle \Psi \rangle$ acting on $J(P)$, but the orbit structure is not immediately apparent.  

In \cite{Reiner}, Reiner, Stanton, and White observed many situations in which the orbit structure of the action of a cyclic group $\langle c \rangle$ on a finite set $X$ may be predicted by a polynomial $X(q) \in \mathbb{Z}[q]$. \\

\noindent \textbf{Definition.}  The triple $(X,X(q),\langle c \rangle)$ exhibits the \textit{cyclic sieving phenomenon} if, for any integer $d$, the number of elements $x$ in $X$ fixed by $c^d$ is obtained 
by evaluating $X(q)$ at $q=\zeta^d$, where $n$ is the order of $c$ on $X$ and $\zeta$ is any primitive $n^{th}$ root of unity.  \\

In the case when $X = J(P)$ and $c$ is the Fon-Der-Flaass action, the natural generating function to consider is the rank-generating function for $J(P)$, which we denote by $J(P; q)$.  Here the rank of an order ideal $I \in J(P)$ is given by the cardinality $|I|$ (so that $J(P;q) := \sum_{I \in J(P)} q^{|I|}$).  

The minuscule posets are a class of posets arising in the representation theory of Lie algebras that enjoy some astonishing combinatorial properties.  We give some background.  

Let $\mathfrak{g}$ be a complex simple Lie algebra with Weyl group $W$ and weight lattice $\Lambda$.  There is a natural partial order on $\Lambda$ called the \textit{root order} in which one weight $\mu$ is considered to be smaller than another weight $\omega$ if the difference $\omega-\mu$ may be expressed as a positive linear combination of simple roots.  If $\lambda \in \Lambda$ is dominant and the only weights occuring in the irreducible highest weight representation $V^\lambda$ are the weights in the $W$-orbit $W \lambda$, then $\lambda$ is called \textit{minuscule}, and the restriction of the root order to the set of weights $W \lambda$ (which is called the \textit{weight poset}) has two alternate descriptions:  

\begin{itemize}
\item
Let $W_J$ be the maximal parabolic subgroup of $W$ stabilizing $\lambda$, and let $W^J$ be the set of minimum-length coset representatives for the parabolic quotient $W/W_J$.  Then there is a natural bijection
$$
\begin{array}{rcl}
W^J & \longrightarrow & W \lambda \\
w & \longmapsto & w_0w\lambda \\
\end{array}
$$
(where $w_0$ denotes the longest element of $W$), and this map is an isomorphism of posets between the \textit{strong Bruhat order} on $W$ restricted to $W^J$ and the root order on $W \lambda$.
\item
Let $P$ be the poset of join-irreducible elements of the root order on $W \lambda$.  Then $P$ is called the \textit{minuscule poset for }$\lambda$, $P$ is ranked, and there is an isomorphism of posets between the weight poset and $J(P)$.  
\end{itemize}

If $P$ is minuscule, Proctor showed (\cite{Proctor}, Theorem 6) that $P$ enjoys what Stanley calls the \textit{Gaussian} property (cf. \cite{Stanleyec}, Exercise 25): There exists a function $f: P \rightarrow \mathbb{Z}$ such that, for all positive integers $m$, 
$$
J(P \times [m]; q) = \prod_{p \in P} \frac{1 - q^{m+f(p)+1}}{1 - q^{f(p)+1}}.
$$

This may be verified case-by-case, but it follows uniformly from the \textit{standard monomial theory} of Lakshmibai, Musili, and Seshadri, as is shown in \cite{Proctor}.  Furthermore, all Gaussian posets are ranked, and if $P$ is Gaussian, we may take $f$ to be the rank function of $P$.  

Thus, for all positive integers $m$, we are led to consider the triple $(X, X(q), \langle \Psi \rangle)$, where $X = J(P \times [m])$, $X(q) = J(P \times [m];q)$, and $P$ is any minuscule poset.  We are at last ready to state the first two of our main results, answering a question of Reiner.

\begin{thm}\label{main1}
Let $P$ be a minuscule poset.  If $m=1$, $(X, X(q), \langle \Psi \rangle)$ exhibits the cyclic sieving phenomenon.  
\end{thm}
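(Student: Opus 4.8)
The plan is to transport the entire problem to the Weyl group $W$ via Stembridge's bijection, recognize there a conjugate of the Fon-Der-Flaass action as the action of a Coxeter element, and then deduce the cyclic sieving phenomenon from the theory of regular elements. Since $m=1$, the relevant polynomial is the rank-generating function $J(P;q) = \sum_{I \in J(P)} q^{|I|}$, and under the poset isomorphisms $J(P) \cong W\lambda \cong W^J$ the cardinality $|I|$ equals the Coxeter length $\ell(w_I)$ of the associated minimal coset representative $w_I \in W^J$. Hence $J(P;q)$ is exactly the Poincar\'e polynomial $W^J(q) = \sum_{w \in W^J} q^{\ell(w)}$, and it suffices to exhibit the cyclic sieving phenomenon for the triple $(W^J, W^J(q), \langle \Psi \rangle)$.

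First I would record the fact, due to Stembridge, that every $w \in W^J$ is fully commutative and that its heap is canonically the order ideal $I_w \in J(P)$ corresponding to $w$; this renders the abstract isomorphism $J(P) \cong W^J$ concrete at the level of reduced words. Write $\kappa\colon P \to \{1,\dots,r\}$ for the coloring of the minuscule poset by the simple reflections $s_1,\dots,s_r$. Because two elements of a common color are always separated in the Hasse diagram by an element of an adjacent color, the toggles $t_p$ attached to elements $p$ with $\kappa(p)=i$ pairwise commute, so their product $T_i := \prod_{\kappa(p)=i} t_p$ is a well-defined involution of $J(P)$. The key lemma I would prove is that, under Stembridge's bijection, $T_i$ is carried to left multiplication by $s_i$, followed by projection back to the minimal coset representative; concretely, in any order ideal at most one color-$i$ element is toggleable, and toggling it adds or removes $s_i$ as a left descent of $w_I$ exactly as the heap calculus predicts.

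Granting the lemma, I would assemble the pieces as follows. By the theorem of Cameron and Fon-Der-Flaass, $\Psi$ is the composite of all the toggles $t_p$ taken in a suitable (top-to-bottom) order along a linear extension; reordering so that toggles of a common color are clustered changes $\Psi$ only into a conjugate $\widetilde{\Psi}$, and clustering yields $\widetilde{\Psi} = T_{i_1} T_{i_2} \cdots T_{i_r}$ for some ordering $s_{i_1},\dots,s_{i_r}$ of the simple reflections. By the key lemma this composite is precisely the action on $W^J \cong W/W_J$ of left multiplication by the Coxeter element $c = s_{i_1} s_{i_2} \cdots s_{i_r}$. Thus a conjugate of the Fon-Der-Flaass action is identified with an arbitrary Coxeter element acting on the parabolic quotient, and in particular the order of $\Psi$ on $J(P)$ equals the order of $c$ on $W/W_J$.

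It then remains to invoke the cyclic sieving phenomenon for a Coxeter element acting on a parabolic quotient: since $c$ is a regular element of order equal to the Coxeter number, Springer's theory of regular elements computes the number of cosets fixed by $c^d$ as the evaluation of the Poincar\'e polynomial $W^J(q)$ at $\zeta^d$, where $\zeta$ is a primitive root of unity of order equal to that of the action, which is exactly the cyclic sieving phenomenon for $(W/W_J, W^J(q), \langle c \rangle)$. Transporting back through Stembridge's bijection and the conjugacy $\Psi \sim \widetilde{\Psi}$ then yields the claim uniformly in all types. The main obstacle is the key lemma: making Stembridge's heap bijection interact correctly with the toggle group, i.e.\ verifying that the global color-toggle $T_i$ really does implement a single simple reflection on $W^J$, uniformly across all minuscule posets. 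This is where the special combinatorics of minuscule heaps---in particular the interlacing of adjacent colors that guarantees a unique toggleable element of each color in every order ideal---must be exploited in full.
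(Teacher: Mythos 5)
Your overall route is the same as the paper's: identify $J(P;q)$ with $W^J(q)$, prove that each color toggle $T_i$ realizes left multiplication by $s_i$ on $W^J$ (this is the paper's Theorem~\ref{mainTheorem}\textit{(ii)}, proved via Lemmas~\ref{shortlemma}, \ref{apple}, and \ref{pear}), conclude that a product of the $T_i$ realizes a Coxeter element, and invoke Theorem~\ref{Victheorem} via Springer's theory of regular elements. You also correctly isolate the equivariance lemma as the main technical burden, and your sketch of it (at most one toggleable element of each color, matching the heap calculus of reduced words) is the right idea.

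However, there is a genuine gap in your assembly step: you assert that ``reordering so that toggles of a common color are clustered changes $\Psi$ only into a conjugate $\widetilde{\Psi}$,'' and treat this as a routine consequence of the Cameron--Fon-Der-Flaass theorem. It is not. Their conjugacy result (Theorem~\ref{CameronLemma2} in the paper) applies only to products of the \emph{rank} toggles $t_0, t_1, \ldots, t_R$ under permutations of the ranks; it works because $t_i$ and $t_j$ commute whenever $|i-j|>1$, so the commutation graph of the rank toggles is a path, hence a tree. For reorderings of the individual toggles $t_p$ no such general statement holds: the commutation graph of $\lbrace t_p \rbrace_{p \in P}$ is governed by the Hasse diagram of $P$, which already has a cycle for $[2] \times [2]$, and products of a fixed set of involutions taken in different orders need not be conjugate. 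Nor is a color-clustered order a linear extension order (elements of one color are spread across many ranks), so Theorem~\ref{CameronLemma1} does not apply to it directly either. The paper closes exactly this hole with a parity argument: every covering relation in a minuscule heap corresponds to an edge of the Dynkin diagram, which is acyclic and hence bipartite (Lemma~\ref{bipartiteDiagram}), so all elements of a fixed color have ranks of the same parity. Consequently, for the particular ordering listing all ``even'' colors before all ``odd'' colors, the color-clustered product equals $t_{\text{even}} t_{\text{odd}}$, which \emph{is} conjugate to $\Psi$ by Cameron--Fon-Der-Flaass; arbitrary orderings are then handled by the pairwise conjugacy of Coxeter elements in $W$, transported into the toggle group through the embedding that your key lemma provides. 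Without this parity step (or some substitute for it), your $\widetilde{\Psi}$ is not known to be conjugate to $\Psi$, and the transfer of orbit structure on which your whole argument rests is unsupported.
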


\begin{thm}\label{main2}
Let $P$ be a minuscule poset.  If $m=2$, $(X, X(q), \langle \Psi \rangle)$ exhibits the cyclic sieving phenomenon.  
\end{thm}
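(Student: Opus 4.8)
The plan is to argue case by case, since (in contrast to Theorem \ref{main1}) no single construction appears to cover all minuscule posets at once. I would begin by invoking the classification of minuscule posets: up to isomorphism, $P$ is a product of two chains $[a]\times[b]$ (arising in types $A$ and, as a chain, $C$), a shifted staircase (types $B$ and $D$, from the spin and half-spin weights), the minuscule poset of the vector representation of $D_n$, or one of the two exceptional posets attached to $E_6$ and $E_7$. In every case the polynomial is handed to us by the Gaussian property: taking $f$ to be the rank function of $P$ and specializing $m=2$,
$$
X(q) = J(P\times[2];q) = \prod_{p\in P}\frac{1-q^{f(p)+3}}{1-q^{f(p)+1}}.
$$
What remains, in each family, is to pin down the order $n$ of $\Psi$ on $J(P\times[2])$ and to verify the fixed-point identity $|X^{\Psi^d}| = X(\zeta^d)$ for every integer $d$.

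For the fixed-point side I would exploit the identification of $J(P\times[2])$ with the nested pairs $(I_1,I_2)$ of order ideals of $P$ satisfying $I_2\subseteq I_1$ (the two horizontal slices of an ideal of $P\times[2]$). The right-hand side $X(\zeta^d)$ I would evaluate by the standard root-of-unity manipulation of the product above, grouping the cyclotomic factors so that $X(\zeta^d)$ collapses into a product of box-counts for smaller posets; this is the easy direction. The content is the matching. For the rectangular family, $J([a]\times[b]\times[2])$ is exactly the set of plane partitions in an $a\times b\times 2$ box, and I would realize the cyclic action by conjugating $\Psi$ into promotion inside the toggle group (following work of Striker and Williams), so that the fixed points of $\Psi^d$ are counted by symmetry classes of such plane partitions; here one finds $n=a+b+2$ for an honest three-dimensional box, degenerating to the two-dimensional situation already covered by Theorem \ref{main1} when $a$ or $b$ equals $1$. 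The shifted-staircase and type-$D$ families I would treat on the same philosophy but with the type-specific model (shifted, respectively symmetric, plane partitions of height two), again reducing the fixed-point count to a promotion-symmetry enumeration.

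The exceptional posets for $E_6$ and $E_7$ are fixed and finite, so $J(P\times[2])$ is a finite set of known size; I would compute the orbit structure of $\Psi$ directly, evaluate $X(q)$ at each $n$-th root of unity via the displayed product, and check that the two columns of numbers agree. This is a finite, if tedious, verification, and I expect to delegate it to a machine.

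The main obstacle is the rectangular family. This is precisely the cyclic sieving phenomenon for plane partitions in a box of height two under rowmotion, which is genuinely deeper than the $m=1$ statement and does not reduce to it: the fixed-point enumeration requires either an explicit equivariant bijection with a combinatorial object carrying a transparent cyclic symmetry (pairs of non-crossing lattice paths are the natural candidate) or a representation-theoretic evaluation of the box generating function at roots of unity, paired with the toggle-group conjugacy that turns $\Psi$ into promotion. Getting the equivariance exactly right, and confirming that the promotion fixed-point counts agree term by term with the cyclotomic evaluations, is where essentially all of the work lives; the fact that no single model serves all five families simultaneously is exactly why the proof cannot be made uniform.
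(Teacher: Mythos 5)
Your skeleton---argue case by case through the classification, take the Gaussian product as the sieving polynomial, check $E_6$ and $E_7$ by machine---is the same as the paper's, but the proposal has a genuine gap exactly at the point you yourself flag: the rectangular family is never proved, only described as the place ``where essentially all of the work lives.'' The paper closes that case not by constructing a new equivariant model but by citation: order ideals of $[k]\times[n]\times[2]$ are in bijection with the Cameron--Fon-Der-Flaass bracket sequences \cite{CameronFonderflaass}; Striker and Williams \cite{Jessica} biject these with noncrossing partitions of $\lbrace 1,2,\ldots,k+n+1\rbrace$ into $k+1$ blocks, so that $\Psi$ becomes rotation; and Theorem 7.2 of \cite{Reiner} is precisely the cyclic sieving phenomenon for such noncrossing partitions under rotation with respect to the $q$-Narayana number, which equals MacMahon's formula (Theorem~\ref{MacMahon}) at $m=2$. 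Without this chain, or a fully executed substitute (your ``pairs of non-crossing lattice paths'' is plausible but is not carried out or proven equivariant), the central case remains open. There is also a concrete error in that paragraph: the order of $\Psi$ on $J([a]\times[b]\times[2])$ is $a+b+1$, not $a+b+2$ (already for $[1]\times[1]\times[2]$ the three order ideals form a single $3$-cycle, and rotation of noncrossing partitions of $\lbrace 1,\ldots,k+n+1\rbrace$ has order $k+n+1$); since the sieving statement evaluates $X(q)$ at roots of unity of exactly the order of $\Psi$, this off-by-one would corrupt every verification downstream.

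Your treatment of the other families also diverges from what is actually needed. For the shifted staircase you propose a ``promotion-symmetry enumeration'' with a height-two shifted model, but the paper's argument is a reduction to the rectangle: $\Psi$ on $J(([n]\times[n])/S_2\times[2])$ is the restriction of $\Psi$ on $J([n]\times[n]\times[2])$ to symmetric plane partitions; by Bender--Knuth (Theorem~\ref{benderKnuth}) the sieving polynomial collapses to the single $q$-binomial coefficient $\left[\begin{smallmatrix}2n+1\\ n\end{smallmatrix}\right]_q$, which by Lemma~\ref{rootEval} vanishes at every nontrivial $(2n+1)$-st root of unity, so the CSP is equivalent to all orbits being free of length $2n+1$; a shorter orbit would also be an orbit in the rectangle case, and Proposition~\ref{ddivides} would then force $d\mid n$ or $d\mid n+1$ for some divisor $d>1$ of $2n+1$, contradicting $\gcd(n,2n+1)=\gcd(n+1,2n+1)=1$. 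Finally, the third infinite family $J^{n-3}([2]\times[2])$ (the double-tailed diamond from the vector representation of $D_n$) appears in your classification but receives no argument at all, and it admits no plane-partition model of the kind you invoke; the paper disposes of it by a separate bracket-sequence analysis (in the companion report \cite{RushShi}). As written, then, your proposal establishes only the two finite exceptional cases and the degenerate rectangles already covered by Theorem~\ref{main1}.
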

 
It turns out that the claim analogous to Theorems~\ref{main1} and \ref{main2} is false for $m=3$; computations performed by Kevin Dilks\footnote{Computed using code in the computer algebra package {\tt Maple}.  The authors also thank Dilks for allowing them the use of his code for subsequent computations.} reveal that when $m=3$ and $P$ is the minuscule poset $[3] \times [3]$, the triple $(X, X(q), \langle \Psi \rangle)$ does not exhibit the cyclic sieving phenomenon.  However, if $P$ belongs to the third infinite family of minuscule posets (see the classification at the end of the introduction), the same triple exhibits the cyclic sieving phenomenon for all positive integers $m$.  This was proved in our original REU report \cite{RushShi} but is omitted here.  The rest of this introduction is devoted to a discussion of Theorems~\ref{main1} and \ref{main2} and a brief overview of our approach to their proofs.  

It should be noted that several special cases of Theorem~\ref{main1} already exist in the literature.  When $P$ arises from a Lie algebra with root system of type $A$, for instance, Theorem~\ref{main1} reduces to a result of Stanley in \cite{Stanley} coupled with Theorem 1.1(b) in Reiner-Stanton-White \cite{Reiner}, and it is recorded as Theorem 6.1 by Striker and Williams in \cite{Jessica}.  The case when the root system is of type $B$ turns out to be handled almost identically, and it is recorded as Corollary 6.3 in \cite{Jessica}.  That being said, our theorem is a generalization of these results, and, in relating Theorem~\ref{main1} to a known cyclic sieving phenomenon for finite Coxeter groups (Theorem 1.6 in \cite{Reiner}), we expose the Fon-Der-Flaass action to new algebraic lines of attack.

If $P$ is a finite poset, it is shown by Cameron and Fon-Der-Flaass in \cite{CameronFonderflaass} that the Fon-Der-Flaass action $\Psi$ may be expressed as a product of the involutive generators $\lbrace t_p \rbrace_{p \in P}$ for a larger group acting on the poset of order ideals $J(P)$.  For all $p \in P$ and $I \in J(P)$, $t_p(I)$ is obtained by \textit{toggling $I$ at $p$}, so that $t_p(I)$ is either the symmetric difference $I \Delta \lbrace p \rbrace$, if this forms an order ideal, or just $I$, otherwise.  In \cite{Jessica}, Striker and Williams named this group the \textit{toggle group}. 

On the other hand, there is a natural labeling of the elements of a minuscule poset $P$ by the Coxeter generators $S = \lbrace s_1, s_2, \ldots, s_n \rbrace$ for the Weyl group $W$, which is given by Stembridge in \cite{Stembridge2}.  In particular, if $P$ is a minuscule poset, there exists a labeling of $P$ such that the linear extensions of the labeled poset (which is called a \textit{minuscule heap}) index the reduced words for the fully commutative element of $W$ representing the topmost coset $w_0W_J$.  This labeling is illustrated in Figure~\ref{ExampleA4} (as well as in Figures~\ref{figa}, \ref{figb}, \ref{figc}, \ref{figd1}, \ref{fige6}, and \ref{fige7} of the Appendix) and explained more thoroughly in section 5.  It has the following important properties.  

First, it realizes the poset isomorphism $J(P) \cong W^J$ explicitly.  Given an order ideal $I \in J(P)$ and a linear extension $(x_1, x_2, \ldots, x_t)$ of the partial order restricted to the elements of $I$, if the corresponding sequence of labels is $(i_1, i_2, \ldots, i_t)$, define $\phi(I)$ to be $s_{i_t} \cdots s_{i_2} s_{i_1}$.  Then the map $\phi: J(P) \rightarrow W^J$ is an order-preserving bijection.  

Second, it indicates a correspondence between Coxeter elements in $W$ and sequences of toggles in $G(P)$: The choice of a linear ordering on the Coxeter generators $S=(s_{i_1},\ldots,s_{i_n})$ yields a choice of the following. 
\begin{itemize}
\item
An element $t_{(i_1,\ldots,i_n)}$ in the toggle group that executes the following sequence of toggles: first toggle at all elements of $P$ labeled by $s_{i_n}$, in any order; then toggle at all elements of $P$ labeled by $s_{i_{n-1}}$, in any order;...; then toggle at all elements of $P$ labeled by $s_{i_2}$, and, finally, toggle at all elements of $P$ labeled by $s_{i_1}$, and
\item
A \textit{Coxeter element} $c=s_{i_1} s_{i_2} \cdots s_{i_{n-1}} s_{i_n}$ in the Weyl group, which acts on cosets $W/W_J$ by left translation (i.e., $c(wW_J)=cwW_J$), and thus also acts on $W^J$.
\end{itemize}

The theorems that reduce Theorem~\ref{main1} to the cyclic sieving result of Reiner-Stanton-White \cite{Reiner} are as follows.  

\begin{thm}\label{secondary1}
For any minuscule poset $P$ and any ordering of  $S=(s_{i_1},\ldots,s_{i_n})$, the actions $\Psi$ and $t_{(i_1,\ldots,i_n)}$ are conjugate in $G(P)$.  
\end{thm}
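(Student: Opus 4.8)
The plan is to treat both $\Psi$ and $t_{(i_1,\ldots,i_n)}$ as \emph{full toggle products}: elements of $G(P)$ of the form $t_{p_N}\cdots t_{p_1}$ in which each generator $t_p$ ($p\in P$) occurs exactly once. That $\Psi$ is of this form is the theorem of Cameron and Fon-Der-Flaass, with $(p_1,\ldots,p_N)$ any linear extension of $P$; that $t_{(i_1,\ldots,i_n)}$ is of this form is immediate once one checks it is well defined, and for this I would first record that any two elements of a minuscule heap sharing a label are comparable but never cover one another (between two equal letters of a reduced word lies a non-commuting letter), so the toggles within a single label block pairwise commute. Two basic moves relate full toggle products: transposing two adjacent factors $t_p,t_q$ whose elements are not in a covering relation leaves the product unchanged (since then $t_pt_q=t_qt_p$), while cyclically moving the first factor to the end conjugates the product by that involutive factor. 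I would package these moves geometrically: a full toggle product determines, up to commuting transpositions, an acyclic orientation of the Hasse diagram of $P$ (orient each cover $p\lessdot q$ according to which of $t_p,t_q$ is applied first), a commuting transposition fixes the orientation, and a cyclic shift realizes a source-to-sink flip. Thus $\Psi$ corresponds to the upward orientation $O_\Psi$ and $t_{(i_1,\ldots,i_n)}$ to the label orientation $O_L$ (orient $p\lessdot q$ from the earlier to the later label block), and it suffices to connect $O_\Psi$ and $O_L$ by source-to-sink flips.

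For this I would appeal to Pretzel's theorem: two acyclic orientations of a connected graph are related by source-to-sink flips if and only if they have the same circulation around every cycle. Since circulation is linear on the cycle space, it is enough to check equality on a generating set, and I claim the covering diamonds (rank-two intervals isomorphic to the Boolean lattice $B_2$) suffice. Here the minuscule structure enters decisively: because the Dynkin diagram is a tree, every rank-two interval of $P$ is a chain or a diamond, and every diamond has the shape (bottom label $\alpha$; two distinct side labels $\beta,\gamma$ adjacent to $\alpha$; top label again $\alpha$), the coincidence of top and bottom label being forced by the absence of a second common Dynkin-neighbor of $\beta$ and $\gamma$ in a tree. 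Consequently the two edges on each flank of a diamond carry the same ordered label pair, so $O_L$ orients them in parallel; a direct count then gives circulation zero around every diamond for $O_L$, and of course also for $O_\Psi$. Hence $O_\Psi$ and $O_L$ agree on all diamonds.

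The remaining point, which I expect to be the main obstacle, is that the diamonds actually generate the cycle space of the cover graph of $P$; equivalently, that this cover graph, with its diamonds filled in as $2$-cells, is simply connected. For type $A$ this is transparent, since the cover graph is a planar grid, but in the branching types $D_n,E_6,E_7$ it requires genuine input about the global shape of the heap rather than a purely local computation. I would establish it uniformly from the $d$-complete (equivalently, minuscule heap) structure catalogued by Proctor and Stembridge, which confines every local configuration to a chain or a diamond and rules out longer essential cycles; this is the step most likely to need care. Granting it, Pretzel's theorem yields the desired flip-equivalence, and hence the conjugacy of $\Psi$ and $t_{(i_1,\ldots,i_n)}$ in $G(P)$. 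I note that the same circulation computation shows $O_L$ is balanced on every diamond for \emph{every} ordering of $S$, so all the elements $t_{(i_1,\ldots,i_n)}$ are mutually conjugate; alternatively, this last fact admits an elementary proof, since commuting generators yield commuting toggle blocks and, the Dynkin diagram being a tree, any two orderings of $S$ are connected by transposing adjacent commuting blocks and cyclically shifting, exactly as in the proof that all Coxeter elements of $W$ are conjugate.
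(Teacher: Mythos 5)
Your proposal takes a genuinely different route from the paper's proof, and most of its ingredients are sound: same-label heap elements are comparable but never in a covering relation, so the label blocks are well-defined; covers in the heap carry Dynkin-adjacent labels, and acyclicity of the Dynkin diagram does force every covering diamond to have equal top and bottom labels, so both $O_\Psi$ and $O_L$ have circulation zero around every diamond; and the dictionary between full toggle products modulo commutations and cyclic shifts, acyclic orientations modulo source-to-sink flips, and conjugacy in $G(P)$, together with Pretzel's theorem, is correctly deployed. The genuine gap is exactly the step you flag and then grant yourself: that the covering diamonds generate the cycle space of the cover graph of $P$. Your proposed justification --- that the $d$-complete/minuscule structure ``confines every local configuration to a chain or a diamond and rules out longer essential cycles'' --- is not an argument, because the first clause is local and the second is global, and the first does not imply the second. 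Concretely, the ranked poset whose Hasse diagram is a hexagon ($a \lessdot b_1 \lessdot c_1 \lessdot d$ and $a \lessdot b_2 \lessdot c_2 \lessdot d$) has every rank-two interval a chain, yet its cycle space is generated by a $6$-cycle and it contains no diamonds at all. What you actually need is a completion property: any two distinct elements covered by a common element have a common lower cover (dual $d_3$-completeness). Granting that, an induction closes the gap: take a maximal-rank peak $v$ of a cycle, entering and leaving through $u_1, u_2 \lessdot v$, and add the boundary of the diamond on $w, u_1, u_2, v$ to lower the peak; induction on the rank profile then shows the diamonds generate over $\mathbb{Z}$ (note you need $\mathbb{Z}$- or $\mathbb{Q}$-generation, not mod-$2$ generation, for circulation to be a linear functional). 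Dual $d_3$-completeness does hold for minuscule posets --- for instance, it follows from Proctor's $d$-completeness combined with the self-duality of minuscule posets --- but it is a substantive extra input beyond the tree property of the Dynkin diagram, and supplying it uniformly is precisely the work your sketch defers.

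For comparison, the paper sidesteps this issue entirely. It uses the embedding of $W$ into $G(P)$ from Theorem~\ref{mainTheorem}, together with Theorem~\ref{secondary2} and the pairwise conjugacy of Coxeter elements in $W$, to reduce to exhibiting a single good ordering of $S$; then, since covers in the heap correspond to Dynkin edges and the Dynkin diagram is bipartite (Lemma~\ref{bipartiteDiagram}), all elements with a given label have ranks of the same parity, so ordering $S$ with all even-parity labels before all odd-parity labels makes $t_{(i_1,\ldots,i_n)}$ literally equal to $t_{\mathrm{even}}t_{\mathrm{odd}}$, which is conjugate to $\Psi$ by Theorem~\ref{CameronLemma2}. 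In effect, the paper transfers the flip-connectivity problem from the Hasse diagram of $P$, where cycles are an obstruction you must control, to the path graph on rank levels, where there are no cycles at all. If you supply the dual-$d_3$ input, your argument becomes a self-contained toggle-group proof that never invokes the Weyl group embedding, which has independent interest; as written, it is complete only where diamond-generation of the cycle space is evident, such as the planar grids of type $A$.
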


\begin{thm}\label{secondary2}
For any minuscule poset $P$ and any ordering of  $S=(s_{i_1},\ldots,s_{i_n})$, if \\
$\phi: J(P) \rightarrow W^J$ is the isomorphism described above, then the following diagram is commutative:
$$
\begin{array}{rcl}
J(P) &\overset{\phi}{\longrightarrow}& W^J\\
t_{(i_1,\ldots,i_n)}\downarrow& &c\downarrow \\
J(P) &\overset{\phi}{\longrightarrow}& W^J.\\
\end{array}
$$
\end{thm}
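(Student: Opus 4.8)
The plan is to factor the statement through the single-generator case. Write $t_j$ for the product of toggles at all elements of $P$ labeled $s_j$, so that $t_{(i_1,\ldots,i_n)} = t_{i_1}\circ t_{i_2}\circ\cdots\circ t_{i_n}$, and write $\sigma_j\colon W^J\to W^J$ for the map sending $w$ to the minimum-length representative of $s_jwW_J$, so that $\sigma_j$ is precisely the left action of $s_j$ on $W/W_J\cong W^J$. Since $W$ acts on $W/W_J$, the Coxeter element $c=s_{i_1}\cdots s_{i_n}$ acts as $\sigma_{i_1}\circ\cdots\circ\sigma_{i_n}$. Consequently, once I establish the single-generator intertwining $\phi\circ t_j=\sigma_j\circ\phi$ as an identity of maps $J(P)\to W^J$ for every $j$, the full commutativity follows by telescoping: $\phi\circ t_{i_1}\circ\cdots\circ t_{i_n}=\sigma_{i_1}\circ\phi\circ t_{i_2}\circ\cdots\circ t_{i_n}=\cdots=\sigma_{i_1}\circ\cdots\circ\sigma_{i_n}\circ\phi=c\circ\phi$. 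So everything reduces to the single-generator claim.

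Next I would pin down the effect of $t_j$ on an order ideal $I$. Using the properties of minuscule heaps from section~5, the elements of $P$ labeled $s_j$ form a chain $p_1<p_2<\cdots<p_r$, and no two of them lie in a cover relation (between consecutive occurrences of $s_j$ in a reduced word there must be a non-commuting, hence adjacently labeled, generator). Two consequences follow. First, toggles at comparable non-covering elements commute, so $t_j$ is well defined independently of the order of toggling, justifying the phrase ``in any order.'' Second, $I$ meets the chain in a prefix $\{p_1,\ldots,p_a\}$, only $p_a$ can be maximal in $I$, and only $p_{a+1}$ can be minimal in $P\setminus I$; moreover $p_a$ and $p_{a+1}$ cannot be simultaneously toggleable, since the no-cover property produces an intermediate element $z$ with $p_a<z<p_{a+1}$, and $p_a$ maximal in $I$ would force $z\notin I$ while $p_{a+1}$ minimal in $P\setminus I$ would force $z\in I$. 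Hence $t_j$ performs exactly one of: remove $p_a$ (if $p_a$ is maximal in $I$), adjoin $p_{a+1}$ (if $p_{a+1}$ is minimal in $P\setminus I$), or nothing.

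Finally I would match these three alternatives to $\sigma_j$. From the reduced-word description of $\phi$, the left descents of $\phi(I)$ are exactly the labels of the maximal elements of $I$; since $p_a$ is the only $s_j$-labeled element that can be maximal in $I$, the generator $s_j$ is a left descent of $\phi(I)$ iff $p_a$ is removable, and in that case $\phi(I\setminus\{p_a\})=s_j\phi(I)=\sigma_j(\phi(I))$. If $s_j$ is not a left descent, then $\ell(s_j\phi(I))=\ell(\phi(I))+1$; here $\phi$ sends adjunction of the maximal element $p_{a+1}$ to left multiplication, so if $p_{a+1}$ is addable then $s_j\phi(I)=\phi(I\cup\{p_{a+1}\})\in W^J$ and again $t_j$ and $\sigma_j$ agree, whereas if $p_{a+1}$ is not addable then $s_j\phi(I)\notin W^J$ (otherwise its preimage under $\phi$ would exhibit an addable element), the coset $s_j\phi(I)W_J$ equals $\phi(I)W_J$, and both $\sigma_j$ and $t_j$ fix their inputs. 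These three cases are exhaustive and mutually exclusive on both sides, so the intertwining holds.

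The main obstacle is the structural input in the second step: verifying, uniformly across the minuscule posets, that a full sweep of toggles over a single label class collapses to one elementary add/remove/fix. This is exactly where the heap combinatorics of section~5 are essential --- the non-consecutiveness of equal labels in a reduced word is what simultaneously secures well-definedness of $t_j$ and the mutual exclusion of the remove and add operations. Once that local picture is in hand, the remaining bookkeeping --- left descents versus maximal elements, and closing the neutral case by exclusion using that $\phi$ is a bijection --- is routine.
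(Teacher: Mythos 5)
Your proposal is correct and takes essentially the same route as the paper's: reduce to the single-generator statement (the paper's Theorem~\ref{mainTheorem}(ii)), show that the product of toggles over one label class changes an order ideal in at most one element (the paper's Lemma~\ref{shortlemma}), and match the three alternatives remove/add/fix with the three Bruhat-side cases --- $s_j\phi(I)$ shorter than $\phi(I)$; $s_j\phi(I)$ longer and in $W^J$; $s_j\phi(I)$ longer and not in $W^J$ --- exactly as in the paper's Lemmas~\ref{apple} and~\ref{pear} and the concluding case analysis. The one assertion you leave unjustified, namely that $s_j\phi(I)\notin W^J$ forces $s_j\phi(I)W_J=\phi(I)W_J$ so that both maps fix their inputs in the neutral case, is not automatic (mutual exclusivity of the three cases only shows that the induced involution on $W^J$ permutes the neutral elements, not that it fixes them pointwise); it is the lifting property of parabolic quotients, which is precisely the point where the paper invokes Corollary 2.5.2 of Bj\"orner--Brenti \cite{Bjorner}.
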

\noindent

To see that these theorems suffice to demonstrate Theorem~\ref{main1}, we quote Theorem 1.6 from \cite{Reiner}.

\begin{thm} \label{Victheorem}

Let $W$ be a finite Coxeter group; let $S$ be the set of Coxeter generators, and let $J$ be a subset of $S$.  Let $W^J$ be the set of minimum-length coset representatives, and let $W^J(q) = \sum_{w \in W^J} q^{l(w)}$, where $l(w)$ denotes the length of $w$.  If $c \in W$ is a regular element in the sense of Springer \cite{Springer}, then $(W^J, W^J(q), \langle c \rangle)$ exhibits the cyclic sieving phenomenon.  

\end{thm}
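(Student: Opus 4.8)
The plan is to verify cyclic sieving directly from the definition. Let $d$ be the order of $c$, and let $\zeta$ denote the regular eigenvalue of $c$, which by Springer's theory \cite{Springer} is a primitive $d$th root of unity; since cyclic sieving is insensitive to the choice of primitive root, it suffices to work with this particular $\zeta$. By definition $c$ has a regular eigenvector $v$ with $cv=\zeta v$. I must show that for every integer $k$ the number of cosets in $W^J\cong W/W_J$ fixed by left multiplication by $c^k$ equals $W^J(\zeta^k)$. Because the number of fixed points of a permutation equals the trace of the corresponding permutation matrix, the left-hand side is $\chi_M(c^k)$, where $M=\mathbb{C}[W/W_J]=\mathrm{Ind}_{W_J}^W\mathbf{1}$ is the coset permutation module. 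Thus the theorem is equivalent to the single identity $\chi_M(c^k)=W^J(\zeta^k)$ for all $k$.

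The first step is to realize $W^J(q)$ as a graded multiplicity in the coinvariant algebra $R=\mathrm{Sym}(V^*)/(\mathrm{Sym}(V^*)^W_+)$ of $W$. For an irreducible character $\chi$, let $f_\chi(q)=\sum_{j}\langle R_j,\chi\rangle\, q^j$ be its fake degree. By Frobenius reciprocity the graded multiplicity of $M$ in $R$ is the Hilbert series of the $W_J$-invariants $R^{W_J}$, and a direct computation with the degrees $d_1,\dots,d_n$ of $W$ and $e_1,\dots,e_n$ of $W_J$ gives
$$
\sum_\chi \langle M,\chi\rangle\, f_\chi(q)=\mathrm{Hilb}(R^{W_J};q)=\frac{\prod_i(1-q^{d_i})}{\prod_j(1-q^{e_j})}=\frac{W(q)}{W_J(q)}=W^J(q).
$$
Hence $W^J(q)$ is exactly the sum of the fake degrees of the constituents of $M$, weighted by multiplicity; its constant term is $1$, which pins down the normalization.

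The decisive input is Springer's theory of regular elements. Springer's character formula states that for a $\zeta$-regular element and any irreducible $\chi$ one has $\chi(c)=f_\chi(\zeta)$; moreover the same eigenvector $v$ satisfies $c^k v=\zeta^k v$, so $c^k$ is itself $\zeta^k$-regular and the formula upgrades to $\chi(c^k)=f_\chi(\zeta^k)$ for every $k$. Combining this with the multiplicity identity yields
$$
\chi_M(c^k)=\sum_\chi\langle M,\chi\rangle\,\chi(c^k)=\sum_\chi\langle M,\chi\rangle\,f_\chi(\zeta^k)=W^J(\zeta^k),
$$
which is precisely what cyclic sieving demands.

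I expect the main obstacle to be Springer's character formula itself: it is the arithmetic-geometric heart of the argument, and even granting it as a citation one must still check that the regular eigenvector for $c$ certifies $c^k$ as $\zeta^k$-regular (so that the formula may be applied to every power, not merely to $c$), and confirm that the fake-degree normalization matches the length grading on $W^J$ implicit in the Hilbert-series computation. Once these points are secured, the translation between fixed-point counts, permutation-character traces, and graded multiplicities is routine, and the theorem follows.
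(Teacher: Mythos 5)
There is nothing in the paper to compare your argument against: the paper does not prove Theorem~\ref{Victheorem} at all, but imports it verbatim as Theorem 1.6 of Reiner--Stanton--White \cite{Reiner} and uses it as a black box in the deduction of Theorem~\ref{main1}. What you have written is essentially a reconstruction of the proof given in \cite{Reiner} itself, and it is correct: the three ingredients --- (1) counting fixed points of $c^k$ on $W/W_J$ by the permutation character of $M=\mathrm{Ind}_{W_J}^W\mathbf{1}$, (2) the identity $\sum_\chi\langle M,\chi\rangle f_\chi(q)=\mathrm{Hilb}(R^{W_J};q)=\prod_i(1-q^{d_i})/\prod_j(1-q^{e_j})=W^J(q)$ via Frobenius reciprocity, Chevalley's theorem, and the factorization $W(q)=W_J(q)\,W^J(q)$, and (3) Springer's formula $\chi(c^k)=f_\chi(\zeta^k)$, applicable to every power because the regular eigenvector for $c$ certifies $c^k$ as $\zeta^k$-regular --- are precisely the ingredients of the argument in \cite{Reiner}; and since the theorem here is stated for finite Coxeter groups (real reflection groups, whose characters are real-valued), no complex-conjugation subtleties arise in Springer's formula \cite{Springer}. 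One loose end is worth tying: the paper's definition of the cyclic sieving phenomenon evaluates $X(q)$ at primitive $n$-th roots of unity where $n$ is the order of $c$ \emph{acting on} $X=W^J$, whereas your identity is proved with $d$, the order of $c$ as an element of $W$. These agree except in the degenerate case $J=S$ (where the claim is trivial): if $J\neq S$ then $W^J$ contains some $s_i$ of length $1$, and if $c^k$ acts trivially on $W^J$ your identity gives $W^J(\zeta^k)=|W^J|$, which, since $W^J(q)$ has nonnegative integer coefficients and constant term $1$, forces $\zeta^{k\,l(w)}=1$ for every $w\in W^J$, hence $\zeta^k=1$ and $d\mid k$; therefore $n=d$ and the two formulations coincide. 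With that one-line observation added, your proposal is a complete proof along the same lines as the cited source.
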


In Theorem~\ref{Victheorem}, if $W^J$ is a distributive lattice, then the length function $l$ also serves as a rank function, so $W^J(q)$ is the rank-generating function.  Furthermore, if $c$ is a Coxeter element of $W$, then $c \in W$ is regular (cf. \cite{Springer}).  

The proofs of Theorem~\ref{secondary1} and Theorem~\ref{secondary2} are carried out in section 6; sections 2, 3, 4, and 5 provide the requisite background.  We did not manage to adapt the techniques developed in these sections for the proof of Theorem~\ref{main2}, so in sections 7-11 we adopt a less theoretical approach, suppressing most of the details.  Full proofs may still be found in the REU report \cite{RushShi}.  In section 7, we review ordinary and symmetric plane partitions, which provide a convenient framework for analyzing order ideals of $P \times [2]$.  Then sections 8, 9, and 10 cover the cases corresponding to the first, second, and third infinite families, respectively.  The claim of Theorem~\ref{main2} for the two exceptional cases is checked by computer in section 11, using the software developed by Dilks.  While the proofs of the results which we assemble into Theorem~\ref{main2} are purely combinatorial, we would like to see a uniform resolution of this problem that draws upon the more algebraic techniques of sections 2-6.  That, in particular, may seem like a tall order, but it should be noted that, for minuscule posets $P$, Stembridge found an instance of the $q=-1$ phenomenon (a special case of the cyclic sieving phenomenon for actions of order 2) that holds uniformly for all Cartesian products $P \times [m]$ in \cite{Stembridge1}.  Thus, even though the situation in the case of general cyclic sieving is considerably more complicated, there may still be reason to be optimistic. 

We close the introduction with a description of the three infinite families and two exceptional cases of minuscule posets and the root systems associated to the Lie algebras from which they arise.  Pictures may be found in the appendix.  The following facts are well-known (cf. for instance, \cite{Stembridge1}).  

\begin{itemize}
\item For the root systems of the form $A_n$, there are $n$ possible minuscule weights, which lead to $n$ associated minuscule posets, namely, all those posets of the form $[j] \times [n+1-j]$ such that $1 \leq j \leq n$.  Posets of this form are considered to comprise the \textit{first infinite family}.  Examples are depicted in Figure~\ref{figa}, parts (b), (c), (d), and (e).   

\item For the root systems of the form $B_n$, there is 1 possible minuscule weight, which leads to 1 associated minuscule poset, namely, $[n] \times [n] / S_2$.  Posets of this form are considered to comprise the \textit{second infinite family}.  An example is depicted in Figure~\ref{figb}, part (b).

\item For the root systems of the form $C_n$, there is 1 possible minuscule weight, which leads to 1 associated minuscule poset, namely, $[2n-1]$.  Posets of this form already belong to the first infinite family.  An example is depicted in Figure~\ref{figc}, part (b).  

\item For the root systems of the form $D_n$, there are 3 possible minuscule weights, which, because two of the minuscule weights lead to the same minuscule poset, only lead to 2 associated minuscule posets, namely, $[n-1] \times [n-1] / S_2$ and $J^{n-3}([2] \times [2])$.  Posets of the latter form are considered to comprise the \textit{third infinite family} (it should be clear that posets of the former form already belong to the second infinite family).  An example is depicted in Figure~\ref{figd1}, part (d).    

\item For the root system $E_6$, there are 2 possible minuscule weights, which, because both minuscule weights lead to the same minuscule poset, only lead to 1 associated minuscule poset, namely, $J^2([2] \times [3])$.  This poset is called the \textit{first exceptional case}.  It is depicted in Figure~\ref{fige6}, part (b). 

\item For the root system $E_7$, there is 1 possible minuscule weight, which leads to 1 associated minuscule poset, namely, $J^3([2] \times [3])$.  This poset is called the \textit{second exceptional case}.  It is depicted in Figure~\ref{fige7}, part (b). 

\end{itemize}

No other root systems admit minuscule weights.  

\section{The Fon-Der-Flaass Action}
In this section, we introduce and analyze the Fon-Der-Flaass action.  This action was introduced by Duchet \cite{Duchet} and first studied in its present form by Brouwer and Schrijver \cite{Brouwer}, but it was the late Dmitry Fon-Der-Flaass who first made substantial progress in the case of products of chains \cite{FonDerFlaass}, and it was this work that brought the action to our attention.  The action has no accepted name in the literature, so we are honored to dedicate it to his memory.  

Let $P = (X, <)$ be a partially ordered set, and let $J(P)$ be the set of order ideals of $P$, partially ordered by inclusion.  Following the notation of \cite{CameronFonderflaass}, for all order ideals $I \in J(P)$, let 
$$Z(I) = \lbrace x \in I : y > x \Longrightarrow y \notin I \rbrace,$$
and let 
$$U(I) = \lbrace x \notin I : y < x \Longrightarrow y \in I \rbrace.$$  
Then the \textit{Fon-Der-Flaass action}, which we denote by $\Psi$, is defined as follows.

\begin{df}\label{dfFonderflaass} \rm
For all $I \in J(P)$, $\Psi(I)$ is the unique order ideal satisfying $Z(\Psi(I)) = U(I)$. 
\end{df}
\begin{rem}\rm
It is clear from Definition~\ref{dfFonderflaass} that $\Psi$ permutes the order ideals of $P$. 
\end{rem}

\begin{figure}[htp]
\begin{center}
\includegraphics[scale = 0.8]{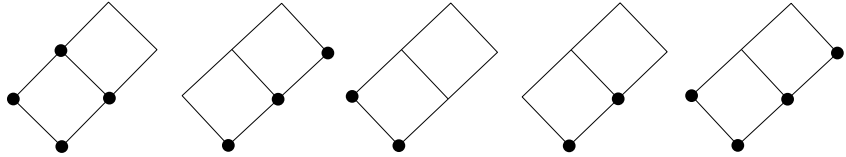} 
\caption{An orbit of order ideals under the Fon-Der-Flaass action}
\label{fig: Fon-Der-Flaass Operation}
\end{center}
\vspace{0in}
\end{figure}

This definition of the Fon-Der-Flaass action is global.  We now give an equivalent definition that decomposes it into a product of local actions, which are more easily understood.  Recall from the introduction that for all $p \in P$ and $I \in J(P)$, we let $t_p: J(P) \rightarrow J(P)$ be the map defined by $t_p(I) = I \setminus \lbrace p \rbrace$ if $p \in Z(I)$, $t_p(I) = I \cup \lbrace p \rbrace$ if $p \in U(I)$, and $t_p(I) = I$ otherwise. The following theorem is equivalent to Lemma 1 in Cameron-Fon-Der-Flaass \cite{CameronFonderflaass}. 

\begin{thm}\label{CameronLemma1}
Let $P$ be a poset.  For all linear extensions $(p_1, p_2, \ldots, p_n)$ of $P$ and order ideals $I \in J(P)$, $\Psi(I) = t_{p_1}t_{p_2}\cdots t_{p_n}(I)$.  
\end{thm}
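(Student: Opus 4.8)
The plan is to show directly that $I' := t_{p_1} t_{p_2} \cdots t_{p_n}(I)$ is an order ideal whose set of maximal elements is exactly $U(I)$; by the uniqueness asserted in Definition~\ref{dfFonderflaass}, this forces $I' = \Psi(I)$. Equivalently, writing $\langle U(I) \rangle = \{x \in P : x \le u \text{ for some } u \in U(I)\}$ for the down-set generated by the antichain $U(I)$, it suffices to prove $I' = \langle U(I) \rangle$, since any order ideal equals the down-set of its set of maximal elements. Thus the target is the element-wise equivalence $p \in I' \iff p \in \langle U(I) \rangle$ for every $p \in P$.

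First I would record two preliminary facts. Each $t_p$ maps $J(P)$ into $J(P)$, so every partial composite is again an order ideal; and since the linear extension lists each element exactly once, each element is toggled exactly once, so the membership of $p_k$ in $I'$ is decided at the instant $t_{p_k}$ is applied and is never altered afterward. I process the toggles in the order $t_{p_n}, t_{p_{n-1}}, \ldots, t_{p_1}$ and let $I^{(k)}$ denote the order ideal present just before $t_{p_k}$ acts (so $I^{(n)} = I$ and $I^{(0)} = I'$). Because $(p_1, \ldots, p_n)$ is a linear extension, every element below $p_k$ has index $< k$ and so is still untoggled in $I^{(k)}$, hence agrees with $I$, while every element above $p_k$ has index $> k$ and so has already reached its final state in $I^{(k)}$, hence agrees with $I'$. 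In particular $p_k \in I^{(k)} \iff p_k \in I$.

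The heart of the argument is an induction on the processing order establishing $p_k \in I' \iff p_k \in \langle U(I)\rangle$, where the inductive hypothesis supplies this equivalence for every element above $p_k$ (all of which are processed earlier). I split into cases by whether $p_k \in I$ and whether $t_{p_k}$ actually moves $p_k$. When $p_k \in I$, the toggle deletes $p_k$ precisely when $p_k$ is maximal in $I^{(k)}$, i.e. when no element above $p_k$ lies in $I^{(k)}$; by the inductive hypothesis this is exactly the condition that no element above $p_k$ lies in $\langle U(I)\rangle$, which I then match against membership of $p_k$ in $\langle U(I)\rangle$. When $p_k \notin I$, the toggle inserts $p_k$ precisely when every element below $p_k$ lies in $I^{(k)}$, i.e. (since those elements agree with $I$) precisely when $p_k \in U(I)$. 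A short check using the defining property of $U(I)$ — that $u \in U(I)$ forces every element strictly below $u$ into $I$ — then confirms the equivalence in each case.

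I expect the main obstacle to be the two ``no toggle'' cases, where $t_{p_k}$ leaves $I^{(k)}$ unchanged: there I must verify that the unaltered membership of $p_k$ nonetheless agrees with $\langle U(I)\rangle$, and this is where the down-set description of $\Psi(I)$ and the antichain property of $U(I)$ do the real work (for instance, ruling out a witness $u > p_k$ in $U(I)$ when some element below $p_k$ escapes $I$). Once the element-wise equivalence is established for every $p_k$, the identity $I' = \langle U(I)\rangle = \Psi(I)$ follows at once.
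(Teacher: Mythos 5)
Your proposal is correct, but note that the paper never proves this statement at all: it simply records it as equivalent to Lemma~1 of Cameron and Fon-Der-Flaass \cite{CameronFonderflaass}, so any comparison is with that cited source rather than with an argument in the text. What you supply is a self-contained replacement, and it goes through as written. The key structural observations are all sound: toggles at distinct elements only affect their own element, so each $p_k$ is settled the moment $t_{p_k}$ acts; and because $(p_1,\ldots,p_n)$ is a linear extension and the composite $t_{p_1}\cdots t_{p_n}$ acts from the top of $P$ downward, at the moment $t_{p_k}$ acts everything strictly above $p_k$ already agrees with $I'$ while everything strictly below (and $p_k$ itself) still agrees with $I$. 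From this the four cases close exactly as you indicate: in the deletion case and the ``stuck inside $I$'' case one uses the inductive hypothesis on elements above $p_k$ together with the fact that $p_k\in I$ excludes $p_k\in U(I)$; in the insertion case one gets $p_k\in U(I)$ directly; and in the ``stuck outside $I$'' case a witness $z<p_k$ with $z\notin I$ kills any potential $u\in U(I)$ with $u\ge p_k$, since $u\in U(I)$ forces $z\in I$. The final identification $I'=\langle U(I)\rangle=\Psi(I)$ is legitimate because $U(I)$ is an antichain, so $Z(\langle U(I)\rangle)=U(I)$ and the uniqueness in Definition~\ref{dfFonderflaass} applies. What your route buys is independence from the literature: the paper's reader must translate between the conventions here (order ideals, $\Psi$ defined via $Z$ and $U$) and those of \cite{CameronFonderflaass} (which is phrased in terms of antichains), whereas your induction works entirely inside the present paper's notation and makes visible the mechanism --- each element's final membership is determined by comparing its neighborhood against $I$ below and against $\Psi(I)$ above --- that the citation leaves implicit.
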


The group $G(P) := \langle t_p \rangle_{p \in P}$ is named the \textit{toggle group} by Striker and Williams \cite{Jessica}.  Note that for all $x$ and $y$, the generators $t_x$ and $t_y$ commute unless $x$ and $y$ share a covering relation.  

In the case that the poset $P$ is ranked, it is natural to consider the linear extensions label the elements of $P$ by order of increasing rank.  For the purposes of this paper, we shall say that $P$ is ranked if there exists an integer-valued function $r$ on $X$ (called the \textit{rank function}) such that $r(p) =0$ for all minimal elements $p \in X$ and, for all $x, y \in X$, if $x$ covers $y$, then $r(x) - r(y) = 1$.  

If $P$ is a ranked poset, let the maximum value of $r$ be $R$.  For all $0 \leq i \leq R$, let $P_i = \lbrace p \in P: r(p) = i\rbrace$, and let $t_i = \prod_{p \in P_i} t_p$.  We see that $t_i$ is always well-defined because, for all $i$, $t_x$ and $t_y$ commute for all $x, y \in P_i$.  By Theorem~\ref{CameronLemma1}, $\Psi = t_0 t_1 \cdots t_R$.  Note that $t_i$ and $t_j$ commute for all $|i-j|>1$.  The following theorem is also a result of Cameron-Fon-Der-Flaass \cite{CameronFonderflaass}. 

\begin{thm}\label{CameronLemma2}
For all permutations $\sigma$ of $\lbrace 0, 1, \ldots, R \rbrace$,  $\Psi_\sigma := t_{\sigma(0)} t_{\sigma(1)} \cdots t_{\sigma(R)}$ is conjugate to $\Psi$ in $G(P)$.
\end{thm}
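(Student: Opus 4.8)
The plan is to reduce the statement to a purely combinatorial fact about rearranging words. Recall from the discussion preceding the theorem that each $t_i$ is a well-defined, invertible element of $G(P)$, and that $t_i$ and $t_j$ commute whenever $|i-j|>1$. I would first record two elementary moves on the word $a=(\sigma(0),\ldots,\sigma(R))$ indexing $\Psi_\sigma=t_{\sigma(0)}t_{\sigma(1)}\cdots t_{\sigma(R)}$. A \emph{commutation move}: if adjacent entries $a_k,a_{k+1}$ satisfy $|a_k-a_{k+1}|>1$, then swapping them leaves the group element $\Psi_\sigma$ literally unchanged, since $t_{a_k}t_{a_{k+1}}=t_{a_{k+1}}t_{a_k}$. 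A \emph{rotation move}: passing from the word $(a_0,a_1,\ldots,a_R)$ to its cyclic shift $(a_1,\ldots,a_R,a_0)$ replaces $\Psi_\sigma$ by $t_{a_0}^{-1}\Psi_\sigma t_{a_0}$, hence by a conjugate. Thus if two orderings are linked by a sequence of commutation and rotation moves, the corresponding products $\Psi_\sigma$ are conjugate in $G(P)$.

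It therefore suffices to prove the combinatorial claim: every ordering of $\{0,1,\ldots,R\}$, viewed as a word using each symbol exactly once, can be transformed into the identity ordering $(0,1,\ldots,R)$ by commutation and rotation moves, where two symbols may commute precisely when they differ by more than $1$. Since the ``non-commuting'' relation is exactly adjacency in the path graph $0-1-\cdots-R$, this amounts to showing that all such words lie in a single equivalence class. I would prove this by induction on $R$, the case $R=0$ being vacuous.

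For the inductive step I single out the symbol $0$, which is a leaf of the path: it commutes with every symbol except $1$. Given an arbitrary word, rotation moves bring $0$ to the front, so the word becomes $0\,u$ with $u$ a word on $\{1,\ldots,R\}$. I would then show that $0\,u$ and $0\,u'$ are equivalent whenever $u$ and $u'$ are equivalent under the moves for the sub-path $1-2-\cdots-R$; granting this, the inductive hypothesis (applied to that sub-path, which after relabeling $i\mapsto i-1$ is a path on $R$ vertices) collapses all words of the form $0\,u$ into one class, completing the step. Commutation moves inside $u$ lift verbatim. The only delicate point is a rotation of $u$: writing $u=x\,v$ and $u'=v\,x$, I must produce $0\,x\,v\equiv 0\,v\,x$. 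If $x\neq 1$ this is immediate, since $0$ and $x$ commute and one subsequent rotation finishes. If $x=1$, I use the key observation that $1$ does not occur in $v$, so $0$ commutes with every letter of $v$; rotating $0\,1\,v$ to $1\,v\,0$, sliding $0$ leftward across all of $v$ to reach $1\,0\,v$, and rotating once more yields $0\,v\,1$, as required.

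The main obstacle is exactly this last case $x=1$: one cannot commute $0$ past its forbidden neighbor $1$, and the argument instead hinges on the fact that deleting the leaf $0$ leaves $1$ as its unique adjacent symbol, which is temporarily rotated out of the way so that $0$ can traverse the remaining letters freely. (For a general poset the corresponding graph need not be a tree, which is precisely why this leaf-stripping works here but would fail without the linear ordering of ranks.) Once the combinatorial claim is established, every $\Psi_\sigma$ is conjugate to $\Psi_{\mathrm{id}}=t_0t_1\cdots t_R=\Psi$, which is the assertion of the theorem.
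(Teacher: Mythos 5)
Your proof is correct, and it is worth noting that the paper itself offers no proof to compare it with: Theorem~\ref{CameronLemma2} is simply quoted as a result of Cameron and Fon-Der-Flaass \cite{CameronFonderflaass}, so your argument fills a gap that the paper delegates to a citation. Your reduction is sound: commutation moves (allowed when $|a_k-a_{k+1}|>1$, which the paper guarantees) leave the group element literally unchanged, a cyclic rotation replaces the product by a conjugate (using that each $t_i$ is invertible, indeed an involution), and conjugacy is transitive, so connectivity of all orderings of $\lbrace 0,1,\ldots,R\rbrace$ under these moves yields the theorem, with $\Psi_{\mathrm{id}} = t_0t_1\cdots t_R = \Psi$ supplied by Theorem~\ref{CameronLemma1}. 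The combinatorial core --- that all words are equivalent when the non-commutation relation is a path --- is exactly the classical argument for the conjugacy of Coxeter elements of a Coxeter group whose diagram is a tree, specialized to the path $0 - 1 - \cdots - R$, and your leaf-stripping induction handles the one delicate case correctly: when the rotated letter $x$ equals $1$, the letter $1$ cannot be commuted past $0$, but since $1$ does not recur in $v$, rotating $0\,1\,v$ to $1\,v\,0$, sliding $0$ freely across $v$, and rotating once more produces $0\,v\,1$ as required. What your write-up buys over the paper's citation is a self-contained and elementary proof; it also makes transparent why the rank structure matters (the $t_i$ interact along a path, the simplest tree), which connects this lemma conceptually to the Coxeter-element conjugacy that the paper later invokes in the proof of Theorem~\ref{secondary1}.
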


\begin{cor}
The action $\Psi_\sigma$ has the same orbit structure as $\Psi$ for all $\sigma$.  
\end{cor}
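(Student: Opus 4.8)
The plan is to deduce this immediately from Theorem~\ref{CameronLemma2} via the standard observation that conjugate permutations share the same cycle type, and hence the same orbit structure. By Theorem~\ref{CameronLemma2}, there exists $g \in G(P)$ with $\Psi_\sigma = g \Psi g^{-1}$. Since $G(P)$ acts on the finite set $J(P)$, both $\Psi$ and $\Psi_\sigma$ are permutations of $J(P)$, and I claim that $g$ restricts to a cardinality-preserving bijection between the orbits of $\Psi$ and the orbits of $\Psi_\sigma$.

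First I would verify that $g$ carries each $\Psi$-orbit onto a $\Psi_\sigma$-orbit. Given $I \in J(P)$, observe that for every integer $k$,
$$
\Psi_\sigma^k(g(I)) = (g \Psi g^{-1})^k (g(I)) = g \Psi^k g^{-1} g(I) = g(\Psi^k(I)),
$$
so the $\Psi_\sigma$-orbit of $g(I)$ is precisely the image under $g$ of the $\Psi$-orbit of $I$. Because $g$ is a bijection of $J(P)$, these two orbits have equal cardinality.

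To conclude, I would note that the assignment $I \mapsto g(I)$ descends to a well-defined bijection at the level of orbits: two elements lie in a common $\Psi$-orbit if and only if their $g$-images lie in a common $\Psi_\sigma$-orbit, again since $g$ is a bijection and conjugation commutes with taking powers. Therefore the multiset of orbit cardinalities of $\Psi_\sigma$ coincides with that of $\Psi$, which is exactly the assertion that the two actions have the same orbit structure.

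There is no genuine obstacle here; the entire content is supplied by Theorem~\ref{CameronLemma2}, and the corollary merely translates conjugacy in $G(P)$ into the language of orbits. The sole point demanding any care is to fix the interpretation of ``same orbit structure'' as equality of the multiset of orbit sizes, which is precisely what the cycle-type argument above delivers.
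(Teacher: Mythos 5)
Your proof is correct and matches the paper's approach: the paper treats this corollary as an immediate consequence of Theorem~\ref{CameronLemma2} (conjugate permutations of a finite set have the same cycle type), which is exactly the argument you spell out. Nothing further is needed.
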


Let $t_{\text{even}} = \prod_{i \text{ even}} t_i$, and let $t_{\text{odd}} = \prod_{i \text{ odd}} t_i$.  It should be clear that $t_{\text{even}}$ and $t_{\text{odd}}$ are well-defined, and it follows from Theorem~\ref{CameronLemma2} that $t_{\text{even}}t_{\text{odd}}$ is conjugate to $\Psi$ in $G(P)$, as noted in the second paragraph of section 4 in \cite{CameronFonderflaass}.  This means that the action of toggling at all the elements of odd rank, followed by toggling at all the elements of even rank, is conjugate to the Fon-Der-Flaass action in the toggle group.  As we shall see, this holds the key to demonstrating that the induced action of every Coxeter element of $W$ on $J(P)$ under $\phi$ is conjugate to the Fon-Der-Flaass action as well.  Striker and Williams made use of the same argument to obtain the conjugacy of promotion and rowmotion (their name for the Fon-Der-Flaass action) in section 6 of \cite{Jessica}, so it should be no surprise that our induced actions reduce to promotion in types $A$ and $B$.  In this sense, our proof of Theorem~\ref{main1} may be considered to be a continuation of their work.  

\section{Minuscule Posets}
In this section, we introduce the primary objects of study for this paper -- the minuscule posets.  We begin with some notation, following Stembridge \cite{Stembridge1}.  Let $\mathfrak{g}$ be a complex simple Lie algebra; let $\mathfrak{h}$ be a Cartan subalgebra; choose a set $\Phi^+$ of positive roots $\alpha$ in $\mathfrak{h}^*$, and let $\Delta = \lbrace \alpha_1, \alpha_2, \ldots, \alpha_n \rbrace$ be the set of simple roots.  Let $(\cdot, \cdot)$ be the inner product on $\mathfrak{h}^*$, and, for each root $\alpha$, let $\alpha^\vee = 2\alpha/(\alpha,\alpha)$ be the corresponding coroot.  Finally, let $\Lambda = \lbrace \lambda \in \mathfrak{h}^* : \alpha \in \Phi \rightarrow (\lambda,\alpha^\vee) \in \mathbb{Z} \rbrace$ be the weight lattice.  

For all $1 \leq i \leq n$, let $s_i$ be the simple reflection corresponding to the simple root $\alpha_i$, and let $W =  \langle s_i \rangle_{1 \leq i \leq n}$ be the Weyl group of $\mathfrak{g}$. If $s$ is conjugate to a simple reflection $s_i$ in $W$, we refer to $s$ as an (abstract) reflection. 

Let $V$ be a finite-dimensional representation of $\mathfrak{g}$.  For each $\lambda \in \Lambda$, let 
$$V_\lambda = \lbrace v \in V : h \in \mathfrak{h} \Longrightarrow hv = \lambda(h)v \rbrace$$
be the weight space corresponding to $\lambda$, and let $\Lambda_V$ be the (finite) set of weights $\lambda$ such that $V_\lambda$ is nonzero.  Recall that there is a standard partial order on $\Lambda$ called the \textit{root order} defined to be the transitive closure of the relations $\mu < \omega$ for all weights $\mu$ and $\omega$ such that $\omega-\mu$ is a simple root.  

\begin{df}\label{weightPoset} \rm
The \textit{weight poset} $Q_V$ of the representation $V$ is the restriction of the root order on $\Lambda$ to $\Lambda_V$.  
\end{df}

If $V$ is irreducible, $Q_V$ has a unique maximal element, which is called the highest weight of $V$.  This leads to the following definition.  

\begin{df}\label{minusculeRep} \rm
Let $V$ be a nontrivial, irreducible, finite-dimensional representation of $\mathfrak{g}$. $V$ is a \textit{minuscule representation} if the action of $W$ on $\Lambda_V$ is transitive.  In this case, the highest weight of $V$ is called the \textit{minuscule weight}.  
\end{df}

\begin{thm}
If $V$ is minuscule, the weight poset $Q_V$ is a distributive lattice.  
\end{thm}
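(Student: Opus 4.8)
The plan is to exhibit $Q_V$ as a sublattice of a product of chains, the essential input being the defining feature of minuscularity that every weight pairs with every coroot to a value in $\{-1,0,1\}$. First I would record this property: since $\Lambda_V = W\lambda$ and $\lambda$ is dominant minuscule, the standard characterization gives $(\lambda,\beta^\vee)\in\{0,1\}$ for all $\beta\in\Phi^+$, so for $\mu=w\lambda$ and any root $\alpha$ one has $(\mu,\alpha^\vee)=(\lambda,(w^{-1}\alpha)^\vee)\in\{-1,0,1\}$. Consequently each $\alpha_i$-string through a weight has length at most two, and $s_i$ either fixes $\mu$ or exchanges $\mu$ with $\mu-(\mu,\alpha_i^\vee)\alpha_i$; in particular the covering relations of $Q_V$ are exactly the pairs $\mu\lessdot\mu+\alpha_i$ for which $(\mu,\alpha_i^\vee)=-1$.

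Next I would coordinatize. For $\mu\in\Lambda_V$ write $\lambda-\mu=\sum_i c_i(\mu)\alpha_i$ with $c_i(\mu)\in\mathbb{Z}_{\ge 0}$, and consider the map $\mu\mapsto(c_1(\mu),\dots,c_n(\mu))$. Directly from the definition of the root order (whose restriction to $\Lambda_V$ is $Q_V$), one has $\mu\le\nu$ if and only if $\nu-\mu$ is a nonnegative integer combination of simple roots, i.e.\ if and only if $c_i(\nu)\le c_i(\mu)$ for every $i$. Thus, with no use of minuscularity yet, this map is an order-embedding of $Q_V$ into the product of chains $\mathbb{Z}^n$ equipped with the reverse of the coordinatewise order; so $Q_V$ is order-isomorphic to a subposet of a distributive lattice.

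The heart of the argument, and the step I expect to be the main obstacle, is showing that the image is \emph{closed} under the coordinatewise meet and join of $\mathbb{Z}^n$ — that is, for $\mu,\nu\in\Lambda_V$ the vectors $\big(\min(c_i(\mu),c_i(\nu))\big)_i$ and $\big(\max(c_i(\mu),c_i(\nu))\big)_i$ are again the coordinate vectors of weights in $\Lambda_V$. The local tool I would use is a diamond lemma: if $(\mu,\alpha_i^\vee)=(\mu,\alpha_j^\vee)=-1$ with $i\ne j$, then since $(\mu+\alpha_i,\alpha_j^\vee)=-1+(\alpha_i,\alpha_j^\vee)$ must again lie in $\{-1,0,1\}$ while $(\alpha_i,\alpha_j^\vee)\le 0$, one is forced to have $(\alpha_i,\alpha_j^\vee)=0$ and $\mu+\alpha_i+\alpha_j\in\Lambda_V$, reached identically by raising in either order. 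I would then promote this local confluence to the global closure statement by inducting on a suitable measure of the discrepancy $\sum_i\lvert c_i(\mu)-c_i(\nu)\rvert$, raising whichever weight has a coordinate exceeding the coordinate-min and using the diamond lemma to guarantee a valid raising exists and that the endpoint is path-independent. The order-reversing anti-automorphism $\mu\mapsto w_0\mu$ of $Q_V$ interchanges the meet and join operations, so it suffices to carry this out for one of them; the genuine difficulty is in establishing that a raising at the required coordinate is always available, which is precisely where the length-at-most-two structure of the weight strings must be exploited globally rather than merely locally.

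Finally, a subposet of a distributive lattice that is closed under the ambient meet and join is itself a distributive lattice, and an order-embedding onto such a sublattice is a lattice isomorphism. Hence $Q_V$ is a distributive lattice, completing the proof.
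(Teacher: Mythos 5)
Your framework is sound up to its last step: the $\{-1,0,1\}$ property, the order-embedding $\mu \mapsto (c_1(\mu),\dots,c_n(\mu))$ of $Q_V$ into $\mathbb{Z}^n$ with the reversed componentwise order, and the diamond lemma are all correct, and the closure statement you aim for (coordinatewise min and max of coordinate vectors of weights are again coordinate vectors of weights) is true and would indeed give the theorem. But the proof has a genuine gap exactly where you flag it, and flagging it does not fill it. Your induction on $\sum_i \lvert c_i(\mu)-c_i(\nu)\rvert$ requires an \emph{existence} statement: if $A=\lbrace i : c_i(\mu)>c_i(\nu)\rbrace \neq \emptyset$, then some $k\in A$ has $(\mu,\alpha_k^\vee)=-1$ (or symmetrically for $\nu$ and $B=\lbrace i : c_i(\nu)>c_i(\mu)\rbrace$), so that raising moves $\mu$ toward the target vector without overshooting it. The diamond lemma cannot supply this: it says two raisings \emph{already known to be available} commute; local confluence never produces an available raising. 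The natural length argument also falls short: writing $\nu-\mu=\beta_A-\beta_B$ with $\beta_A=\sum_{k\in A}(c_k(\mu)-c_k(\nu))\alpha_k$ and $\beta_B=\sum_{k\in B}(c_k(\nu)-c_k(\mu))\alpha_k$, one has $(\beta_A,\beta_A)>0$ and $(\beta_A,\beta_B)\le 0$, hence $(\beta_A,\nu)>(\beta_A,\mu)$, which yields only the disjunction ``$(\mu,\alpha_k^\vee)=-1$ for some $k\in A$, \emph{or} $(\nu,\alpha_k^\vee)=+1$ for some $k\in A$''; the second horn lets you lower $\nu$, which changes the coordinatewise minimum and destroys the induction target. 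Worse, the missing claim is essentially equivalent to the theorem itself: granting the theorem, the join of $\mu$ and $\nu$ realizes the min vector, and the first step of a saturated chain from $\mu$ up to that join is precisely the required raising. So as written, the core step is circular.

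For comparison, the paper does not attempt this direct weight-theoretic route: it records the theorem with a remark citing Proctor's original case-by-case verification, and notes that it follows uniformly by combining Theorem~\ref{ProctorLemma1} ($Q_V \cong (W^J,<_B)$) with Stembridge's Theorem~\ref{minusculelattice} ($<_B$ and $<_L$ coincide on a minuscule quotient, and $(W^J,<_L)$ is a distributive lattice); distributivity there is automatic because, by Theorem~\ref{fullycommutes}, $(W^J,<_L)$ is isomorphic to $J(P_{w_0^J})$, a poset of order ideals. Note that in the heap picture your closure statement becomes transparent: $c_i(\mu)$ is the number of heap elements labeled $s_i$ in the corresponding order ideal, the elements with a fixed label form a chain, and so unions and intersections of order ideals realize exactly the coordinatewise min and max. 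If you want to complete a uniform proof, the efficient fix is to establish full commutativity (equivalently, the chain property of the label classes) and pass through the heap, rather than to fight the induction inside the weight lattice directly.
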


\begin{rem}\rm
This result, due to Proctor (cf. \cite{Proctor}, Propositions 3.2 and 4.1), was originally verified by exhaustive search, but it is also a consequence of Theorem~\ref{minusculelattice}, for which a case-free proof was given using Bruhat-theoretic techniques by Stembridge in \cite{Stembridge2}. 
\end{rem}

\begin{df}\label{minusculedefn} \rm
If $V$ is minuscule, let $P_V$ be the poset of join-irreducible elements of the weight poset $Q_V$, so that $P_V$ is the unique poset satisfying $J(P_V) \cong Q_V$.  Then $P_V$ is the \textit{minuscule poset} of $V$, and posets of this form comprise the \textit{minuscule posets}.  
\end{df}

\begin{rem} \rm
If $V$ is a minuscule representation and $\lambda$ is the highest weight of $V$, we refer to $P_V$ as the \textit{minuscule poset for $\lambda$}.               
\end{rem}
 
\section{Bruhat Posets}

In this section, we develop the framework for the proofs of Theorems~\ref{secondary1} and \ref{secondary2}.  We begin by discussing the Bruhat posets.  Then we establish the connection between these objects and the weight posets of minuscule representations.

We continue with the notation of the previous section.  Given a Weyl group $W$, we define a length function $l$ on the elements of $W$ as follows.  For all $w \in W$, we let $l(w)$ be the minimum length of a word of the form $s_{i_1}s_{i_2} \ldots s_{i_\ell}$ such that $w = s_{i_1}s_{i_2} \ldots s_{i_\ell}$ and $s_{i_j}$ is a simple reflection for all $1 \leq j \leq \ell$.  This allows us to introduce a well-known partial order on $W$, known as the (strong) Bruhat order, for which $l$ also serves as a rank function.  The Bruhat order is defined to be the transitive closure of the relations $w <_B sw$ for all Weyl group elements $w$ and (abstract) reflections $s$ satisfying $l(w) < l(sw)$.  

What is of interest is not the Bruhat order on $W$, but the restrictions of the Bruhat order to parabolic quotients of $W$, for these are the orders that give rise to the Bruhat posets.  

\begin{df} \rm
If $J$ is a subset of $\lbrace 1, 2, \ldots, n \rbrace$, then $W_J := \langle s_i \rangle_{i \in J}$ is the \textit{parabolic subgroup} of $W$ generated by the corresponding simple reflections, and $W^J := W/W_J$ is the \textit{parabolic quotient}.  
\end{df}

It is well-known that each coset in $W^J$ has a unique representative of minimum length, so the quotient $W^J$ may be regarded as the subset of $W$ containing only the minimum-length coset representatives.  This fact facilitates the definition of an analogous partial order on $W^J$.  

\begin{df} \rm
The \textit{Bruhat order} $<_B$ on the parabolic quotient $W^J$ is the restriction of the Bruhat order on $W$ to $W^J$.  Posets of the form $(W^J,<_B)$ comprise the \textit{Bruhat posets}. 
\end{df}

We may also define the left (weak) Bruhat order on $W$ to be the transitive closure of the relations $w <_L sw$ for all Weyl group elements $w$ and \textit{simple} reflections $s$ satisfying $l(w) < l(sw)$.  The analogous partial order on $W^J$ is defined in precisely the same way: $(W^J, <_L)$ is the restriction of $(W, <_L)$ to the minimum-length coset representatives $W^J$.  While the left Bruhat order is not necessary to establish the connection between the minuscule posets and the Bruhat posets, we introduce it here so that our work in this section may be compatible with the theory of fully commutative elements developed in section 5 and exploited in section 6.  

We are now ready to state the following theorem, which appears as Proposition 4.1 in Proctor \cite{Proctor}.  

\begin{thm}\label{ProctorLemma1}
Let $V$ be a minuscule representation with minuscule weight $\lambda$, and let $J = \lbrace i: s_i \lambda = \lambda \rbrace$.  Then $W_J$ is the stabilizer of $\lambda$ in the Weyl group $W$, and the weight poset $Q_V$ is isomorphic to the Bruhat poset $(W^J, <_B)$.   
\end{thm}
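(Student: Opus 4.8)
The plan is to prove Theorem~\ref{ProctorLemma1} by exhibiting an explicit poset isomorphism between the weight poset $Q_V$ and the Bruhat poset $(W^J, <_B)$, and then verifying that it respects the order relations on each side. First I would establish that $W_J$ is the full stabilizer of $\lambda$. Since $\lambda$ is minuscule, for each simple root $\alpha_i$ the pairing $(\lambda, \alpha_i^\vee)$ lies in $\{-1, 0, 1\}$; because $\lambda$ is dominant it lies in $\{0, 1\}$, and the reflection $s_i$ fixes $\lambda$ precisely when $(\lambda, \alpha_i^\vee) = 0$, i.e.\ exactly when $i \in J$. The containment $W_J \subseteq \operatorname{Stab}_W(\lambda)$ is then immediate, and the reverse containment follows from the standard fact that the stabilizer of a dominant weight is the parabolic subgroup generated by the simple reflections fixing it (this is the usual parabolic-stabilizer lemma for the action of $W$ on the dominant chamber).

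Next I would construct the bijection. Since $W$ acts transitively on $\Lambda_V = W\lambda$ by the definition of a minuscule representation, and $W_J$ is exactly the stabilizer of $\lambda$, the orbit map $w \mapsto w\lambda$ descends to a bijection between the coset space $W/W_J$ and $\Lambda_V$. Identifying $W^J$ with the minimum-length coset representatives, this gives a bijection $\Phi \colon W^J \to \Lambda_V$, $w \mapsto w\lambda$. It remains to check that $\Phi$ is an isomorphism of posets from $(W^J, <_B)$ to $Q_V$. The key computational content is to match a Bruhat covering relation $w \lessdot_B w'$ in $W^J$ with a root-order covering $w\lambda < w'\lambda$ in $Q_V$. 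Here I would use the characterization of Bruhat covers in a minuscule quotient: a cover $w \lessdot_B s_\beta w$ (with $s_\beta$ a reflection in an abstract root $\beta$) corresponds to changing the weight by a single root, so that $w'\lambda - w\lambda$ is a positive root, which is exactly a covering step in the root order. Conversely, because $\lambda$ is minuscule, every root-order relation on $\Lambda_V$ arises this way, since the minusculeness forces the difference of any two comparable weights to decompose into single-root steps staying inside $W\lambda$.

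I expect the main obstacle to be this last equivalence between Bruhat covers and root-order covers, where the minuscule hypothesis does the essential work and must be invoked carefully. The subtle point is that, for a general parabolic quotient, the orbit map need not be order-preserving in both directions; it is precisely the minuscule condition, which guarantees that all weights lie in a single $W$-orbit with all multiplicities one and all pairings in $\{-1,0,1\}$, that rules out order-reversals and degeneracies. Concretely, one must show that if $w, w' \in W^J$ satisfy $w <_B w'$, then $w'\lambda - w\lambda$ is a nonnegative integer combination of simple roots, and that the induced partial order is the transitive closure of the single-root covers. The forward direction follows by tracking how length-increasing left multiplication by reflections moves the weight down the $W$-orbit; the reverse direction uses that in a minuscule orbit the weights are totally controlled by the sequence of simple reflections applied, so that root-order comparability lifts to Bruhat comparability of the minimum-length representatives. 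Since this is exactly the content of Proctor's Proposition 4.1, I would cite \cite{Proctor} for the verification of the covering correspondence and present the argument above as the conceptual skeleton, treating the weight-tracking estimates as the routine (if technical) core.
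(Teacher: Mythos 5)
Your argument for the first claim is fine: the observation that $(\lambda,\alpha_i^\vee)\in\{0,1\}$ for a dominant minuscule weight, together with the standard parabolic-stabilizer lemma, does give $W_J=\mathrm{Stab}_W(\lambda)$. The gap is in the second part: the orbit map $\Phi\colon W^J\to\Lambda_V$, $w\mapsto w\lambda$, is \emph{not} order-preserving from $(W^J,<_B)$ to $Q_V$; it is order-\emph{reversing} (an anti-isomorphism). Concretely, the identity coset $e\in W^J$ is the Bruhat minimum, yet $\Phi(e)=\lambda$ is the highest weight, i.e.\ the maximum of $Q_V$; dually, the longest representative $w_0^J$ maps to the lowest weight $w_0\lambda$. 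More precisely, if $w\in W^J$ and $s_i$ is a simple reflection with $l(s_iw)>l(w)$, then $w^{-1}\alpha_i>0$, so $(w\lambda,\alpha_i^\vee)=(\lambda,(w^{-1}\alpha_i)^\vee)\geq 0$, and hence $s_iw\lambda=w\lambda-(w\lambda,\alpha_i^\vee)\alpha_i\leq w\lambda$ in root order: going up in Bruhat order moves the weight \emph{down} the orbit. So your claimed covering correspondence (a Bruhat cover $w<_B w'$ yielding $w'\lambda-w\lambda$ a positive root) has the sign backwards, and the assertion that $w<_B w'$ forces $w'\lambda-w\lambda$ to be a nonnegative combination of simple roots is false as stated.

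This is exactly the subtlety the paper's remark isolates. The theorem remains true because the anti-isomorphism can be corrected by an order-reversing involution: the paper (following its introduction) uses $w\mapsto w_0w\lambda$, i.e.\ composes $\varphi\colon w\mapsto w\lambda$ with the involution $\omega\mapsto w_0\omega$ of $Q_V$; alternatively one may precompose $\varphi$ with $w\mapsto w_0w(w_0^J)^{-1}w_0$ on $W^J$. (Proctor himself sidesteps the issue by ordering the weights opposite to the root order, which is why a direct transcription of his Proposition 4.1 into the paper's conventions needs this adjustment.) Your proposal is salvageable: keep the stabilizer argument and the cover-matching analysis, but carry it out for the map $w\mapsto w_0w\lambda$ (or prove the anti-isomorphism and then apply the involution). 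As written, however, the central claim that $\Phi$ itself is a poset isomorphism is incorrect, and the weight-tracking step you describe would fail at the first covering relation.
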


\begin{rem}\rm
There is a small subtlety in the proof Theorem~\ref{ProctorLemma1} because the natural map $\varphi: W^J \rightarrow Q_V$ to consider, $w \mapsto w \lambda$, is order-reversing, rather than order-preserving.  (In other words, for all $u, v \in W^J$, $u \lambda < v \lambda$ if and only if $v <_B u$.)  However, composing $\varphi$ with the order-reversing involution of $Q_V$ given by $\omega \mapsto w_0 \omega$, where $w_0$ is the unique longest element of $W$, yields a suitable isomorphism (as noted in the introduction).  Alternatively, $\varphi$ may be precomposed with the corresponding order-reversing involution of $W^J$ given by $w \mapsto w_0 w (w_0^J)^{-1} w_0$, where $w_0^J$ denotes the unique longest element of $W^J$.  In Proctor's proof of Theorem~\ref{ProctorLemma1}, he circumvents this step by defining the partial order on the weights opposite to the root order.  We avoid his approach here because it leads to unnecessary confusion over terms such as ``highest weight.'' 
\end{rem}

\begin{df} \rm
The parabolic quotient $W^J$ is \textit{minuscule} if $W_J$ is the stabilizer of a minuscule weight $\lambda$.  
\end{df}

The assumption that $\mathfrak{g}$ be simple implies that $\lambda$ is fundamental (recall that the fundamental weights $\omega_1, \omega_2, \ldots, \omega_n$ are defined by the condition $(\omega_i, \alpha_j^\vee)= \delta_{ij}$ for all $1 \leq i, j \leq n$, where $\delta_{ij}$ is the Kronecker delta).  Hence if $\lambda = \omega_j$, then $s_i \lambda = \lambda$ for all $i \neq j$.  It follows that if $W^J$ is minuscule, $J = \lbrace 1, 2, \ldots, n \rbrace \setminus \lbrace j \rbrace$, so $W_J$ is a maximal parabolic subgroup of $W$.  In general, a minuscule Bruhat poset is obtained precisely when the ``missing'' element of $J$ is the index of a fundamental weight for which there exists a representation of $\mathfrak{g}$ in which that fundamental weight is minuscule.  

We note that Bruhat posets $W^J$ provide a natural setting for identifying instances of the cyclic sieving phenomenon because they come equipped with a group action, namely that of $W$, and a rank-generating function $W^J(q) := \sum_{w \in W^J} q^{l(w)}$, which is what motivated us to consider them in the first place.  We now turn our attention to the labeling of the minuscule poset $P_V$ and the construction of the isomorphism $\phi: J(P_V) \rightarrow W^J$, which lie behind the proofs of Theorems~\ref{secondary1} and \ref{secondary2}.  

\section{Fully Commutative Elements}

In this section we borrow from Stembridge's theory of fully commutative elements of Weyl groups.  In the next section, we shall see how the theory enables us to characterize the relationship between the action of the Weyl group on the elements of these lattices and the action of the toggle group on the order ideals of the corresponding minuscule posets.

\begin{df} \rm
Let $W$ be a Weyl group, and let $S = \lbrace s_1, s_2, \ldots, s_n \rbrace$ be the set of Coxeter generators.  An element $w \in W$ is \textit{fully commutative} if every reduced word for $w$ can be obtained from every other by means of commuting braid relations only (i.e., via relations of the form $s_js_{j'} = s_{j'}s_j$ for commuting Coxeter generators $s_j$ and $s_{j'}$).  
\end{df}

Given a fully commutative element $w$, we can define a labeled poset $P_w$ that generates all the reduced words of $w$ in the sense that putting labels in the place of poset elements gives a bijection between the linear extensions of $P_w$ and the reduced words of $w$.  

\begin{df}\label{heap} \rm
Let $s_{i_1} s_{i_2} \cdots s_{i_\ell}$ be a reduced word for $w$.  Let $P_w = (\lbrace 1, 2, \ldots, \ell \rbrace, <)$ be a partially ordered set, where the partial order on $\lbrace 1, 2, \ldots, \ell \rbrace$ is defined to be the transitive closure of the relations $j > j'$ for all $j < j'$ in integers such that $s_{i_j}$ and $s_{i_{j'}}$ do not commute.   Then $P_w$ is the \textit{heap} of $w$, and, for all $1 \leq j \leq \ell$, $s_{i_j}$ is the \textit{label} of the heap element $j \in P_w$.  An example is given in Figure~\ref{ExampleA4}.  
\end{df}

\begin{figure}[htp]
\begin{center}
\includegraphics[scale = 0.7]{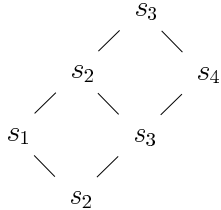} 
\caption{If $W$ is the Weyl group arising from the root system $A_4$, then the element $w := s_3s_2s_4s_1s_3s_2$ is fully commutative, and the heap $P_w$ is as displayed above.  }
\label{ExampleA4}
\end{center}
\vspace{0in}
\end{figure}

Let $\mathcal{L}(P_w):= \lbrace \pi: \pi(1) \geq \pi(2) \geq \ldots \geq \pi(\ell) \rbrace$ be the set of \textit{reverse} linear extensions of $P_w$, and let $\mathcal{L}(P_w,w)$ be the set of labeled reverse linear extensions of $P_w$, i.e., 
$$\mathcal{L}(P_w,w):= \lbrace s_{i_{\pi(1)}} s_{i_ {\pi(2)}} \cdots s_{i_ {\pi(\ell)}} : \pi \in \mathcal{L}(P_w) \rbrace. $$

As alluded to above, the set $\mathcal{L}(P_w,w)$ is significant for the following reason.  

\begin{prop}\label{linearReduced}
$\mathcal{L}(P_w,w)$ is the set of reduced words for $w$ in $W$.  
\end{prop}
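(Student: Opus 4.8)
The plan is to show a bijection between $\mathcal{L}(P_w, w)$ and the set of reduced words of $w$, which amounts to proving two containments together with injectivity. The heap $P_w$ is constructed from a \emph{fixed} reduced word $s_{i_1}s_{i_2}\cdots s_{i_\ell}$, and I must verify that the labeled reverse linear extensions recover precisely the reduced words. First I would establish the easier direction: every element of $\mathcal{L}(P_w, w)$ is a reduced word for $w$. The key observation is that a reverse linear extension $\pi$ differs from the identity extension $(1,2,\ldots,\ell)$ by a sequence of transpositions swapping \emph{adjacent} incomparable elements of $P_w$. Two heap elements $j, j'$ are incomparable only when their labels $s_{i_j}, s_{i_{j'}}$ commute (this is essentially the definition of the partial order via noncommuting generators), so each such swap corresponds to applying a commuting braid relation $s_j s_{j'} = s_{j'} s_j$ to the word. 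Since commuting braid relations preserve both the product and reducedness, every labeled linear extension yields a reduced word for the same element $w$.

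The harder direction is the reverse containment: every reduced word for $w$ arises from some reverse linear extension of $P_w$. Here I would invoke Matsumoto's theorem (the word property for Coxeter groups), which guarantees that any two reduced words for $w$ are connected by a sequence of braid relations. Because $w$ is \emph{fully commutative} by hypothesis, Definition of full commutativity tells us that only \emph{commuting} braid relations are needed: no long braid relation $s_j s_{j'} s_j \cdots = s_{j'} s_j s_{j'} \cdots$ ever occurs. Thus any reduced word is obtained from the fixed word $s_{i_1}\cdots s_{i_\ell}$ by a sequence of adjacent transpositions of commuting generators. I would then argue that each such transposition corresponds to swapping two incomparable adjacent elements in the current ordering of $P_w$, hence stays within the set of linear extensions, so the resulting word lies in $\mathcal{L}(P_w, w)$.

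For injectivity of the labeling map $\pi \mapsto s_{i_{\pi(1)}}\cdots s_{i_{\pi(\ell)}}$, I would use the standard fact that in a heap, distinct elements carrying the same label are always comparable. Indeed, if two positions $j < j'$ share a label $s_{i_j} = s_{i_{j'}}$, then $s_{i_j}$ and $s_{i_{j'}}$ do not commute (a generator never commutes with itself in the relevant sense), forcing $j > j'$ in $P_w$; consequently the relative order of equally-labeled elements is fixed across all linear extensions. This means the label sequence determines the underlying permutation $\pi$ uniquely, so no two distinct reverse linear extensions produce the same word.

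The main obstacle I anticipate is the reverse containment, specifically the bookkeeping needed to show that a commuting braid move on a word translates cleanly into a transposition of incomparable neighbors in the heap and that one never leaves the family of linear extensions. The subtlety is that full commutativity must be used \emph{globally} — it is what rules out long braid moves that would otherwise take us outside the heap structure — and I would want to track carefully that at each intermediate step the positions being swapped are genuinely adjacent and incomparable, so that the correspondence between ``braid moves on words'' and ``swaps in linear extensions'' is exact. The forward direction and injectivity are comparatively routine once the order-theoretic translation of commuting relations is set up.
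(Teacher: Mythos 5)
Your proposal is correct, but it takes a genuinely different route from the paper: the paper's entire proof of Proposition~\ref{linearReduced} is a citation to Proposition 2.2 of Stembridge \cite{Stembridge2}, whereas you give a self-contained argument (which is, in substance, the standard one underlying Stembridge's result). Your forward containment is fine: any two reverse linear extensions of a finite poset are connected by swaps of adjacent incomparable entries, and incomparable entries carry commuting labels since the heap order of Definition~\ref{heap} relates every noncommuting pair. Your reverse containment is also fine, and note that Matsumoto's theorem is not actually needed -- the paper \emph{defines} full commutativity as the property that all reduced words are connected by commuting braid moves, so you may quote the definition directly. The bookkeeping obstacle you flag does have a clean resolution, which is worth making explicit: if two entries adjacent in a (reverse) linear extension were comparable in $P_w$, they would have to form a covering pair (any intermediate element of a chain would have to be listed between them, but there is no room), and a covering relation cannot arise from transitivity alone, so it comes from a generating relation and hence from noncommuting labels -- contradicting that a commuting braid move swaps distinct commuting generators. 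So each move stays within the set of labeled reverse linear extensions, closing the induction. Two caveats. First, injectivity of the labeling map is more than the proposition asserts (it is a set equality), so that portion is optional. Second, your injectivity argument assumes a generator ``does not commute with itself'' in the heap order; the paper's literal convention is the opposite -- it derives the comparability of equally-labeled heap elements a posteriori, in the proof of Theorem~\ref{fullycommutes}, by invoking Proposition~\ref{linearReduced} itself (two incomparable equal labels would yield a reduced word with consecutive equal letters). If you retain the injectivity claim under the paper's conventions, derive total orderedness of the label fibers that way rather than from the definition. In sum, the paper's citation buys brevity; your argument buys self-containedness at the modest cost of writing out the covering-pair lemma.
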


\begin{proof}
See Proposition 2.2 in \cite{Stembridge2}.  
\end{proof}

\begin{rem}\rm
We define the partial order on $P_w$ to be the reverse of Stembridge's order and consider reverse linear extensions rather than linear extensions.  Furthermore, Stembridge defines heaps for all words in $W$, whereas our definition is only correct for reduced words of fully commutative elements $w$.  The implications for the theory are rather cosmetic; we make these deviations for the sake of convenience only.
\end{rem}

It follows from Proposition~\ref{linearReduced} that, if $w$ is fully commutative, the heaps of the reduced words for $w$ are all equivalent, so we may refer to the heap of $w$ unambiguously.  This is also noted in \cite{Stembridge2}.   The crucial claim is the next theorem.  

\begin{thm}\label{fullycommutes}
Let $w \in W$ be fully commutative.  Then $J(P_w) \cong \lbrace x \in W: x \leq_L w \rbrace$ is an isomorphism of posets.  
\end{thm}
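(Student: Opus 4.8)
The plan is to construct the isomorphism explicitly as the map $\phi \colon J(P_w) \to \{ x \in W : x \leq_L w \}$ that reads the labels off an order ideal. Given $I \in J(P_w)$, let $(j_1, \ldots, j_{|I|})$ be any reverse linear extension of the induced subposet $P_w|_I$, listing maximal elements of $I$ first, and set $\phi(I) := s_{i_{j_1}} s_{i_{j_2}} \cdots s_{i_{j_{|I|}}}$. First I would check that $\phi$ is well defined: any two reverse linear extensions of $P_w|_I$ differ by transpositions of adjacent incomparable elements, and incomparable elements of a heap carry commuting labels by Definition~\ref{heap}, so all these products agree. Since $I$ is an order ideal, its complement is a filter, and a reverse linear extension of that filter may be concatenated with one of $P_w|_I$ to yield a reverse linear extension of $P_w$; by Proposition~\ref{linearReduced} the resulting word is reduced for $w$, so its length-$|I|$ suffix $\phi(I)$ is reduced. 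This exhibits $w = u\,\phi(I)$ with $\ell(w) = \ell(u) + \ell(\phi(I))$, whence $\phi(I) \leq_L w$.

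Two further properties follow from the same bookkeeping. If $I \subseteq I'$, then $I$ is an order ideal of $P_w|_{I'}$, so a reverse linear extension of $I'$ may be chosen listing $I' \setminus I$ first and $I$ last; reading labels gives $\phi(I') = v\,\phi(I)$ with lengths adding, so $\phi(I) \leq_L \phi(I')$ and $\phi$ is order-preserving. Conversely, given $x \leq_L w$, the left weak order furnishes $u$ with $w = u x$ and $\ell(w) = \ell(u) + \ell(x)$; concatenating reduced words for $u$ and $x$ yields a reverse linear extension of $P_w$ whose final $\ell(x)$ entries form an order ideal $I$ with $\phi(I) = x$, so $\phi$ is surjective. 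Note also $|I| = \ell(\phi(I))$, so $\phi$ is rank-preserving.

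The crux, and the step I expect to be the main obstacle, is injectivity --- equivalently, that the order ideal attached to $x$ is independent of the factorization chosen above. This is where full commutativity is genuinely used, and I would argue by induction on $|I|$ using two structural facts from the heap theory of Section~5: (i) elements of a heap sharing a label form a chain; and (ii) every order ideal $I$ of $P_w$ is itself the heap of the fully commutative element $\phi(I)$, since an order ideal is convex and any factor of a fully commutative element is again fully commutative (a forbidden braid factor in a reduced word of $\phi(I)$ would persist in the extended reduced word of $w$). By (ii) and Proposition~\ref{linearReduced}, the reverse linear extensions of $P_w|_I$ realize every reduced word of $\phi(I)$, so each left descent $s$ of $x = \phi(I)$ labels a maximal element of $I$, unique by (i). Now if $\phi(I) = \phi(I') = x \neq e$, choose a left descent $s$ of $x$ and let $p, p'$ be the unique maximal elements of $I, I'$ labeled $s$; then $\phi(I \setminus \{ p \}) = sx = \phi(I' \setminus \{ p' \})$, so $I \setminus \{ p \} = I' \setminus \{ p' \} =: K$ by induction. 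Finally $p, p'$ are both minimal in $P_w \setminus K$ and labeled $s$, so by (i) they are comparable, and minimality forces $p = p'$; hence $I = I'$.

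It remains to check that $\phi^{-1}$ is order-preserving. Here I would pass to covers: if $I \lessdot I'$ in $J(P_w)$, then $I' = I \cup \{ p \}$ with $p$ minimal in $P_w \setminus I$, hence maximal in $I'$, so $\phi(I') = s_{i_p}\,\phi(I)$ covers $\phi(I)$ in the left weak order, and $\phi$ sends covers to covers. The inductive recipe for $\phi^{-1}$ extracted above reverses this: for a cover $x \lessdot_L sx$, removing the maximal element labeled $s$ gives $\phi^{-1}(x) \subseteq \phi^{-1}(sx)$, so along a saturated chain from $x$ to $y$ the associated order ideals are nested, yielding $\phi^{-1}(x) \subseteq \phi^{-1}(y)$ whenever $x \leq_L y$. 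With bijectivity in hand, this proves $I \subseteq I' \iff \phi(I) \leq_L \phi(I')$, so $\phi$ is the desired isomorphism of posets.
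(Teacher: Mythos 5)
Your construction of $\phi$ is the same as the paper's, but your verification of its properties runs along genuinely different lines, and it is correct. The paper handles well-definedness by a splicing trick: it attaches the complement part of one linear extension of $P_w$ to the ideal part of the other, so that one and the same word is reduced for both $wx^{-1}$ and $wx'^{-1}$, forcing $x = x'$; you instead use that linear extensions of a finite poset are connected by adjacent transpositions of incomparable elements, whose labels commute. For bijectivity the paper writes down an explicit inverse, $x \mapsto \bigcup_{k=1}^{n}\lbrace \gamma_{k,h} : 1 \leq h \leq \nu(k,x)\rbrace$, taking the bottom $\nu(k,x)$ elements of each label-chain $C_k$, where $\nu(k,x)$ counts occurrences of $s_k$ in a reduced word for $x$; you instead prove surjectivity directly (a suffix of a reduced word for $w$ picks out an order ideal) and injectivity by induction on length via left descents, resting on your fact (ii) that an order ideal of a heap is the heap of the corresponding element. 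Your route is longer but buys two things the paper elides: an explicit proof that $\phi^{-1}$ is order-preserving (the paper declares the remaining verifications ``a trivial matter''), and the descent analysis itself, which anticipates precisely the content of the paper's later Lemmas~\ref{apple} and~\ref{pear} relating left descents of $\phi(I)$ to maximal elements of $I$ with a given label. Two caveats. First, your fact (i), that equal-labelled heap elements form a chain, is not actually available as a citation --- in the paper it is proved inside this very proof --- so you should supply its one-line argument: two incomparable elements with equal labels would occur consecutively in some reverse linear extension, putting $s_k s_k$ inside a reduced word. Second, your justification of fact (ii) implicitly invokes Stembridge's characterization of full commutativity by avoidance of long braid factors in reduced words, which the paper never states; it is standard, but deserves an explicit reference.
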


\begin{proof}
A proof is found in \cite{Stembridge2} (cf. Lemma 3.1), but because our definitions are different from Stembridge's, and because the map between the two posets will be of importance in its own right for our proof of Theorem ~\ref{secondary2}, we provide our own adaptation of Stembridge's proof.

For all $1 \leq k \leq n$, let $C_k := \lbrace j: s_{i_j} = s_k \rbrace$ be the set of all heap elements labeled by $s_k$.  We first note that each $C_k$ is a totally ordered subset of $P_w$.  The proof is by contradiction.  Suppose that there exist incomparable elements $j, j' \in P_w$ such that $s_{i_j}=s_{i_{j'}}=s_k$.  Then there exists a reverse linear extension of $P_w$ in which $j$ and $j'$ occur consecutively, which implies that the corresponding reduced word for $w$ contains two consecutive instances of $s_k$.  This is of course impossible.  Thus, we may write $C_k$ in the form $\lbrace \gamma_{k,1} < \gamma_{k,2} < \ldots < \gamma_{k,{\nu(k,w)}} \rbrace$, where $\nu(k,w)$ denotes the number of instances of $s_k$ in a reduced word for $w$, and $\nu$ is well-defined because $w$ is fully commutative.  

We are now ready to define the bijection between $J(P_w)$ and $\lbrace x \in W : x \leq_L w \rbrace$.  Given an order ideal $I \in J(P_w)$, let $\rho$ be a linear extension of $P_w$ such that $\rho(j) \in I$ for all $1 \leq j \leq |I|$ and $\rho(j) \notin I$ otherwise.   

\begin{df}\label{mapPhi1} \rm
$$
\begin{array}{rcl}
\phi: J(P_w) & \longrightarrow & \lbrace x \in W: x \leq_L w \rbrace \\ 
\end{array}
$$
is defined by
$$
\begin{array}{rcl}
I & \longmapsto & s_{i_{\rho(|I|)}} \cdots s_{i_{\rho(2)}} s_{i_{\rho(1)}}.  \\
\end{array}
$$
\end{df}

\begin{rem} \rm
The choice of the symbol $\phi$ to denote this map is deliberate, for when the heap $P_w$ is minuscule (see Definition~\ref{minusculeheap}), $\phi$ is the map described in the introduction.  
\end{rem}

It is not immediately clear that $\phi$ is well-defined.  However, if $\rho$ and $\rho'$ are both linear extensions of $P_w$ such that $\rho(j), \rho'(j) \in I$ for all $1 \leq j \leq |I|$ and $\rho(j), \rho'(j) \notin I$ otherwise, then let $x = s_{i_{\rho(|I|)}} \cdots s_{i_{\rho(2)}} s_{i_{\rho(1)}}$ and $x' = s_{i_{\rho'(|I|)}} \cdots s_{i_{\rho'(2)}} s_{i_{\rho'(1)}}$.  Since 
$$(\rho(\ell), \ldots, \rho(|I|+1), \rho(|I|), \ldots, \rho(2), \rho(1))$$
is a reverse linear extension of $P_w$, 
$$s_{i_{\rho(\ell)}} \cdots s_{i_{\rho(|I|+1)}} s_{i_{\rho(|I|)}} \cdots s_{i_{\rho(2)}} s_{i_{\rho(1)}}$$
is a reduced word for $w$, so $s_{i_{\rho(\ell)}} \cdots s_{i_{\rho(|I|+1)}}$ is a reduced word for $wx^{-1}$.  However, 
$$(\rho(\ell), \ldots, \rho(|I|+1), \rho'(|I|), \ldots, \rho'(2), \rho'(1))$$
is also a reverse linear extension of $P_w$.  It follows that $s_{i_{\rho(\ell)}} \cdots s_{i_{\rho(|I|+1)}}$ is a reduced word for $wx'^{-1}$, so $x=x'$, as desired.  

To see that $\phi$ is bijective, we define the inverse map $\phi^{-1}: \lbrace x \in W: x \leq_L w \rbrace \rightarrow J(P_w)$ by $x \mapsto \cup_{k=1}^{n} \lbrace \gamma_{k,h}: 1 \leq h \leq \nu(k,x) \rbrace$.  Because every reduced word for $x$ is the final segment of a reduced word for $w$, it should be clear that $\phi^{-1}(x)$ is an order ideal of $P_w$ for all $x \leq_L w$.  It is a trivial matter to verify that $\phi^{-1} \phi$ is the identity on $J(P_w)$ and $\phi \phi^{-1}$ is the identity on $\lbrace x \in W: x \leq_L w \rbrace$, so this completes the proof.  
\end{proof}

The following theorem demonstrates the relevance of the theory of fully commutative elements to our main results.  

\begin{thm}\label{minusculelattice}
If $W^J$ is minuscule, then the following three claims hold:

\begin{enumerate}[label=(\roman{*})]
\item If $w \in W^J$, $w$ is fully commutative;
\item $(W^J, <_L)$ is a distributive lattice;
\item $(W^J, <_B) = (W^J, <_L)$.    
\end{enumerate}
\end{thm}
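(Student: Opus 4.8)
The plan is to treat the three assertions with a common set of tools drawn from the root-order description of $Q_V$, reserving the full-commutativity statement (i) for last, as it is the most delicate. Throughout, write $\lambda = \omega_j$ for the minuscule weight, so that $J = \{1,\dots,n\}\setminus\{j\}$ and, crucially, $\langle \mu, \alpha_i^\vee\rangle \in \{-1,0,1\}$ for every $\mu \in W\lambda$ and every $i$; this is the defining feature of minuscule weights that all three parts exploit. I would first record two auxiliary facts. The first is that $W^J$ is closed under passing to right factors: if $w = yx$ reducedly with $w \in W^J$, then $x \in W^J$, since a right descent $s_i$ ($i \in J$) of $x$ would force $\ell(ws_i) \le \ell(y)+\ell(x)-1 < \ell(w)$, contradicting minimality of $w$. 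The second, \emph{intrinsic}, fact is that the covering relations of $Q_V$ are exactly the simple-root steps $\mu \lessdot \nu$ with $\nu - \mu = \alpha_i$: given a cover $\mu \lessdot \nu$, the $W$-invariance of $(\cdot,\cdot)$ gives $|\mu| = |\nu|$, whence $(\nu - \mu, \nu) = \tfrac12 |\nu-\mu|^2 > 0$, so some simple root $\alpha_i$ occurring with positive coefficient in $\nu - \mu$ satisfies $\langle \nu, \alpha_i^\vee\rangle = 1$; then $s_i\nu = \nu - \alpha_i$ lies in $W\lambda = \Lambda_V$ and still dominates $\mu$ (its difference from $\mu$ has nonnegative simple-root coefficients), so $\mu \le s_i\nu < \nu$ and the cover forces $s_i\nu = \mu$, i.e. $\nu - \mu = \alpha_i$.

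With the cover lemma in hand, I would prove (iii) directly by matching covering relations, the point being that $<_L$ is always contained in $<_B$, so it suffices to show every Bruhat cover in $W^J$ is a weak cover. Identify $(W^J, <_B)$ with $Q_V$ via the order-\emph{reversing} bijection $x \mapsto x\lambda$ of Theorem~\ref{ProctorLemma1} (cf. the remark following it). A Bruhat cover $u \lessdot_B v$ then corresponds to a cover $v\lambda \lessdot u\lambda$ of $Q_V$, so by the cover lemma $u\lambda - v\lambda = \alpha_i$ for a simple root, i.e. $u\lambda = s_i(v\lambda)$ with $\langle v\lambda, \alpha_i^\vee\rangle = -1$. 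Since $\lambda$ is dominant and $\langle v\lambda, \alpha_i^\vee\rangle = \langle \lambda, (v^{-1}\alpha_i)^\vee\rangle = -1 < 0$, the root $v^{-1}\alpha_i$ is negative, so $\ell(s_iv) = \ell(v) - 1 = \ell(u)$ and $s_iv$ is a right factor of $v$; by the first auxiliary fact $s_iv \in W^J$, and since it has the same image $u\lambda$ under the bijection, $u = s_iv$. Thus $v = s_iu$ is a weak cover over $u$, so $<_B$ and $<_L$ have the same covers and hence coincide.

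For (ii), note that $(W^J, <_B) \cong Q_V$ is a distributive lattice by the theorem following Definition~\ref{minusculeRep} (proved independently by Proctor), so (iii) immediately yields that $(W^J,<_L)$ is a distributive lattice as well. One can equally deduce this from the machinery of this section: (i) makes the top element $w_\top$ of $W^J$ fully commutative, right-factor-closedness together with (iii) identifies $W^J$ with the weak-order ideal $\{x : x \le_L w_\top\}$, and Theorem~\ref{fullycommutes} realizes the latter as $J(P_{w_\top})$, a distributive lattice by construction.

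The main obstacle is (i). Here I would use the \emph{build-up} property of minuscule quotients: for $w \in W^J$ with reduced word $s_{a_1}\cdots s_{a_\ell}$, each suffix $v_k = s_{a_k}\cdots s_{a_\ell}$ lies in $W^J$ (first auxiliary fact), and one checks that $\langle v_{k+1}\lambda, \alpha_{a_k}^\vee\rangle = 1$ (it equals $\langle \lambda, (v_{k+1}^{-1}\alpha_{a_k})^\vee\rangle \ge 0$ by dominance, is at most $1$ by minusculeness, and is nonzero because $v_k \ne v_{k+1}$ represent distinct cosets), so the partial products trace a chain of weights descending by one simple root at a time. I would then invoke the standard criterion that a non-fully-commutative element has a reduced word containing a full braid factor $\underbrace{s_as_bs_a\cdots}_{m_{ab}}$ with $m_{ab} \ge 3$, and derive a contradiction by tracking $\langle \cdot, \alpha_a^\vee\rangle$ and $\langle \cdot, \alpha_b^\vee\rangle$ along such a factor: the pairing-$1$ condition propagated by the build-up property, together with the Cartan integers $\langle \alpha_b, \alpha_a^\vee\rangle \le -1$, forces one of these pairings out of $\{-1,0,1\}$. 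The delicate point, and the part requiring genuine case analysis, is the doubly-laced case ($m_{ab} = 4$, occurring in types $B$ and $C$), where the length-three subword $s_as_bs_a$ survives but the full length-four braid $s_as_bs_as_b$ produces the required value outside $\{-1,0,1\}$; the simply-laced bonds ($m_{ab}=3$) are already excluded at length three.
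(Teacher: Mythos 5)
Your proposal is correct in substance, but be aware that it is not a variant of the paper's proof, because the paper offers none: Theorem~\ref{minusculelattice} is dispatched in a single sentence by citing Theorems 6.1 and 7.1 of Stembridge \cite{Stembridge2}. What you have written is a self-contained weight-theoretic proof, and its main ingredients all check out: closure of $W^J$ under right factors; the lemma that covers of $Q_V$ are simple-root steps (the equal-norm computation $(\nu-\mu,\nu)=\tfrac12|\nu-\mu|^2>0$ is right); the deduction of (iii) by showing each Bruhat cover $u \lessdot_B v$ in $W^J$ satisfies $v=s_iu$, using Proctor's anti-isomorphism $x \mapsto x\lambda$ from Theorem~\ref{ProctorLemma1}; and the build-up property $\langle v_{k+1}\lambda,\alpha_{a_k}^\vee\rangle = 1$ along any reduced word (nonnegativity from reducedness and dominance, the bound from minusculeness, nonvanishing from uniqueness of minimum-length coset representatives). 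This is close in spirit to Stembridge's own cited arguments via $\lambda$-minuscule elements: your route buys independence from \cite{Stembridge2} at the price of redoing its rank-two analysis, while the paper's citation buys exactly the avoidance of that work. For (ii), prefer your second deduction (via (i), (iii), right-factor closure, and Theorem~\ref{fullycommutes}): the first leans on Proctor's distributivity of $Q_V$, which the paper's remark after Definition~\ref{minusculeRep} notes was originally verified by exhaustive search, so only the second route is consistent with the goal of a uniform proof.

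Two local repairs are needed in (i). First, the contradiction along a braid factor is not that a pairing leaves $\{-1,0,1\}$. Running your bookkeeping for $m_{ab}=3$ with the factor applied right to left, the build-up conditions force $\langle \mu,\alpha_a^\vee\rangle = 1$ and $\langle \mu,\alpha_b^\vee\rangle = 0$, and then the third letter requires $\langle \mu-\alpha_a-\alpha_b,\alpha_a^\vee\rangle = 1$, whereas the Cartan integers give the value $1-2+1=0$; for $m_{ab}=4$ the failure (at the third or fourth letter, depending on which root is short) is likewise a forced value $0$ in place of $1$, with all pairings remaining inside $\{-1,0,1\}$ throughout. So the contradiction is with the nonvanishing clause of the build-up property (two distinct elements of $W^J$ would represent the same coset), not with the minuscule bound on pairings; the argument is just as effective, but it should be stated that way. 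Second, a complete case analysis must also dispose of $m_{ab}=6$: either run the identical computation (it collapses by the third letter, and there the value genuinely does leave $\{-1,0,1\}$), or justify separately that $G_2$ admits no minuscule weight; as written, your case list $m_{ab}\in\{3,4\}$ silently invokes the classification of which Weyl groups carry minuscule weights, which a uniform proof should avoid.
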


This theorem is a consequence of Theorems 6.1 and 7.1 in \cite{Stembridge2}, for which Stembridge's proofs are uniform.  We will see how it enables us to apply our knowledge of fully commutative heaps to the minuscule setting in the next section.

\section{The Main Results}

In this section, we prove Theorems~\ref{secondary1} and \ref{secondary2}.  We start with the following definition and subsequent theorem.  

\begin{df}\label{minusculeheap} \rm
If $W^J$ is minuscule, and $w_0^J$ is the longest element of $W^J$, then the heap $P_{w_0^J}$ is \textit{minuscule}, and heaps of this form comprise the \textit{minuscule heaps}.  
\end{df}

\begin{rem} \rm
Some of the minuscule heaps appear in Wildberger \cite{Wildberger}, but his construction differs from ours.  In particular, he introduces a set of heaps that he calls \textit{two-neighbourly}, and he observes that these are precisely the minuscule heaps arising from complex simple Lie algebras whose root systems are simply laced.  
\end{rem}

\begin{thm}\label{mainTheorem}
Let $V$ be a minuscule representation of a complex simple Lie algebra $\mathfrak{g}$ with minuscule weight $\lambda$ and Weyl group $W$.  If $S = \lbrace s_1, s_2, \ldots, s_n \rbrace$ is the set of Coxeter generators and $W_J$ is the maximal parabolic subgroup stabilizing $\lambda$, then the following claims hold:

\begin{enumerate}[label=(\roman{*})]
\item If $w_0^J$ is the longest element of $W^J$, then the poset $\lbrace x \in W: x \leq_L w_0^J \rbrace$ and the lattice $(W^J, <_L)$ are identical, and, furthermore, the minuscule heap $P_{w_0^J}$ and the minuscule poset $P_V$ are isomorphic as posets.  
\item The isomorphism $\phi: J(P_{w_0^J}) \rightarrow \lbrace x \in W: x \leq_L w_0^J \rbrace \cong (W^J, <_L) \cong (W^J, <_B)$ defined in Definition~\ref{mapPhi1} satisfies the following property: For all $1 \leq k \leq n$, the induced action of the Coxeter generator $s_k$ on $J(P_{w_0^J})$ in the toggle group $G(P_{w_0^J})$ may be expressed in the form $\displaystyle\prod_{\substack{p \in P_{w_0^J} \\ p \text{ is labeled by }s_k}} t_p$.
\end{enumerate}

\end{thm}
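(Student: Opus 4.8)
The plan is to treat parts (i) and (ii) separately: part (i) by splicing together the poset isomorphisms of Sections~4 and~5, and part (ii) by exploiting the reduced-word formula for $\phi$ to identify the induced $s_k$-action with a product of toggles along a single label-chain.

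For part (i) I would first prove the set-theoretic identity $\{x \in W : x \leq_L w_0^J\} = W^J$. The inclusion $\supseteq$ is immediate: by Theorem~\ref{minusculelattice}(ii) the poset $(W^J,<_L)$ is a finite distributive lattice with top element $w_0^J$, so every $x \in W^J$ satisfies $x \leq_L w_0^J$. For $\subseteq$ I would show $W^J$ is a lower order ideal of the left weak order on $W$: if $x \leq_L w$, a reduced word for $w$ factors as a reduced word for $wx^{-1}$ followed by a reduced word for $x$, so a right descent of $x$ lying in $J$ would be a right descent of $w$ in $J$, contradicting $w \in W^J$; taking $w=w_0^J$ gives the claim. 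Since $w_0^J \in W^J$ is fully commutative by Theorem~\ref{minusculelattice}(i), I may then chain $J(P_{w_0^J}) \cong \{x \leq_L w_0^J\}$ (Theorem~\ref{fullycommutes}) $= (W^J,<_L) = (W^J,<_B)$ (Theorem~\ref{minusculelattice}(iii)) $\cong Q_V \cong J(P_V)$ (Theorem~\ref{ProctorLemma1} and Definition~\ref{minusculedefn}) into an isomorphism of distributive lattices. The isomorphism $P_{w_0^J} \cong P_V$ then follows from Birkhoff's theorem, as a finite distributive lattice determines its poset of join-irreducibles up to isomorphism.

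For part (ii) the induced action of $s_k$ is $\sigma_k := \phi^{-1}\circ(s_k\cdot)\circ\phi$, where $s_k\cdot$ sends $x\in W^J$ to the minimum-length representative of $s_k x W_J$. Using $\phi^{-1}(x)=\bigcup_k\{\gamma_{k,h}:1\le h\le\nu(k,x)\}$ from the proof of Theorem~\ref{fullycommutes}, an order ideal $I$ meets each chain $C_k=\{\gamma_{k,1}<\cdots<\gamma_{k,\nu(k,w_0^J)}\}$ in an initial segment $\{\gamma_{k,1},\ldots,\gamma_{k,m_k}\}$ with $m_k=\nu(k,\phi(I))$. Since two consecutive same-label heap elements are never a covering pair (a non-commuting neighbor always lies strictly between them), the toggles indexed by $C_k$ pairwise commute and $\prod_{p\in C_k}t_p$ is well-defined; moreover only the boundary elements $\gamma_{k,m_k}$ and $\gamma_{k,m_k+1}$ can lie in $Z(I)\cup U(I)$, and they cannot both be toggleable, lest the image fail to meet $C_k$ in an initial segment. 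Thus both $\sigma_k$ and $\prod_{p\in C_k}t_p$ alter $I$ only within $C_k$, changing $m_k$ by $\pm1$ or fixing it, so it remains to match three cases.

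To match them I would use that appending a maximal element to $I$ corresponds under $\phi$ to left-multiplying $\phi(I)$ by that element's label: adding $\gamma_{k,m_k+1}$ realizes $x\mapsto s_k x$ with length increasing, and removing $\gamma_{k,m_k}$ realizes $x\mapsto s_k x$ with length decreasing. The removal case is then immediate, since $\gamma_{k,m_k}\in Z(I)$ exactly when some $s_k$-labeled element is maximal in $I$, that is, exactly when $k$ is a left descent of $x$ and $l(s_k x)=l(x)-1$. The remaining dichotomy between adding and fixing is, on the $W^J$ side, governed by the minuscule hypothesis through the pairing $(x\lambda,\alpha_k^\vee)\in\{1,0,-1\}$ of Theorem~\ref{ProctorLemma1}: the value $0$ is precisely when $s_k$ stabilizes the coset $xW_J$, forcing $\sigma_k(x)=x$. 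The step I expect to be the main obstacle is reproducing this dichotomy on the toggle side — proving that, when $k$ is not a left descent of $x$, the element $\gamma_{k,m_k+1}$ is addable to $I$ (its lower covers all lying in $I$) if and only if $s_k x\in W^J$, with non-addability matching the fixed case. I would establish this by examining the lower covers of $\gamma_{k,m_k+1}$ in the minuscule heap, which carry labels adjacent to $s_k$ in the Coxeter diagram, and converting the condition that these all lie in $I$ into the no-right-descent-in-$J$ characterization of membership in $W^J$, using the interleaving of neighboring labels forced by full commutativity (Theorem~\ref{minusculelattice}(i)). Assembling the add, remove, and fix cases then yields $\sigma_k=\prod_{p\in C_k}t_p$ on all of $J(P_{w_0^J})$, completing (ii).
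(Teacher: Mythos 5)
Your part (i) is correct and is essentially the paper's own argument: the containment $W^J \subseteq \lbrace x : x \leq_L w_0^J \rbrace$ via maximality of $w_0^J$ in $(W^J,<_L)$, the reverse containment via a reduced-word/descent argument (the paper uses the criterion that $x \in W^J$ iff $xx_0$ is reduced, $x_0$ the longest element of $W_J$; your right-descent version is an equivalent standard fact), then the chain of isomorphisms and Birkhoff. The genuine problem is in part (ii), at exactly the step you yourself flag as ``the main obstacle.'' The crux of the whole theorem is the biconditional: for $x=\phi(I)$ with $l(s_kx)=l(x)+1$, the element $\gamma_{k,m_k+1}$ exists and lies in $U(I)$ if and only if $s_kx \in W^J$ (together with its dual: $\gamma_{k,m_k}\in Z(I)$ iff $l(s_kx)<l(x)$ --- whose ``if'' direction is also not ``immediate,'' contrary to your claim, though it is the easier half). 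This is precisely the content of the paper's Lemmas~\ref{apple} and~\ref{pear}, and you do not prove it; you propose to prove it by a local analysis of the lower covers of $\gamma_{k,m_k+1}$ (Dynkin-adjacent labels, ``interleaving''), but that plan is not carried out, and it is doubtful it can be completed as stated: converting ``some lower cover of $\gamma_{k,m_k+1}$ avoids $I$'' into ``$s_kx$ has a right descent in $J$'' requires global information linking the heap to the Weyl group, and neither you nor the paper develops the structural theory of minuscule heaps (e.g., Wildberger's two-neighbourly property) on which such a purely local argument would have to lean.

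For comparison, the paper proves the hard direction globally, exploiting the fact that every reduced word for $w_0^J$ yields the same heap: if $s_kx$ is reduced and lies in $W^J$, then $s_k$ followed by a reduced word for $x$ is a reduced word for $s_kx$, which (since $s_kx \leq_L w_0^J$ by part (i)) extends to a reduced word for $w_0^J$; rebuilding the heap with respect to \emph{that} word exhibits an $s_k$-labeled vertex in $U(I)$. Ironically, your own machinery closes the gap more cheaply than your stated plan: when $l(s_kx)=l(x)+1$ and $s_kx\in W^J$ (hence fully commutative and $\leq_L w_0^J$), one has $\nu(k,s_kx)=\nu(k,x)+1\leq \nu(k,w_0^J)$ and $\nu(j,s_kx)=\nu(j,x)$ for $j\neq k$, so the explicit formula for $\phi^{-1}$ from the proof of Theorem~\ref{fullycommutes} gives $\phi^{-1}(s_kx)=I\cup\lbrace \gamma_{k,m_k+1}\rbrace$; since $\phi^{-1}$ of any element of $\lbrace y : y\leq_L w_0^J\rbrace$ is an order ideal, $\gamma_{k,m_k+1}\in U(I)$. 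This also settles the edge case you never address, $m_k=\nu(k,w_0^J)$ (no $\gamma_{k,m_k+1}$ exists): then $s_kx\notin W^J$, so by the parabolic-quotient lemma (Corollary 2.5.2 in \cite{Bjorner}, which the paper invokes) $s_kx=xs_j$ with $j\in J$, the coset is fixed, and this matches the toggles fixing $I$. With that lemma supplied, your remaining case-matching (remove/add/fix, and the $\lbrace -1,0,+1\rbrace$ weight-pairing trichotomy on the $W^J$ side) assembles correctly.
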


\begin{exam} \rm
In the case when the root system is $A_4$ and the minuscule weight is $\omega_2$, Figure~\ref{fig: mainTheorem} shows the minuscule heap $P_{s_3s_2s_4s_1s_3s_2}$ (on the left) and the corresponding Bruhat poset $(W^J, <_B)$ (on the right).  If $I$ is the order ideal encircled by the solid line, then $\phi(I)$ is the coset representative encircled by the solid line, and $\prod_{p \in P_{s_3s_2s_4s_1s_3s_2} \text{ is labeled by }s_2} t_p(I)$ is the order ideal encircled by the dotted line.  Furthermore, $\phi( \prod_{p \in P_{s_3s_2s_4s_1s_3s_2} \text{ is labeled by }s_2} t_p(I)) = s_2 \phi(I)$ is the coset representative encircled by the dotted line, thus illustrating the statement \textit{(ii)} in Theorem~\ref{mainTheorem}.    
\end{exam}

\begin{figure}[htp]
\begin{center}
\subfigure[]{\includegraphics[scale = 0.7]{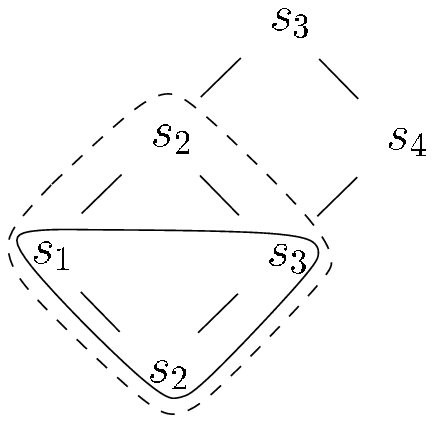}} \hspace{0.7in}
\subfigure[]{\includegraphics[scale = 0.7]{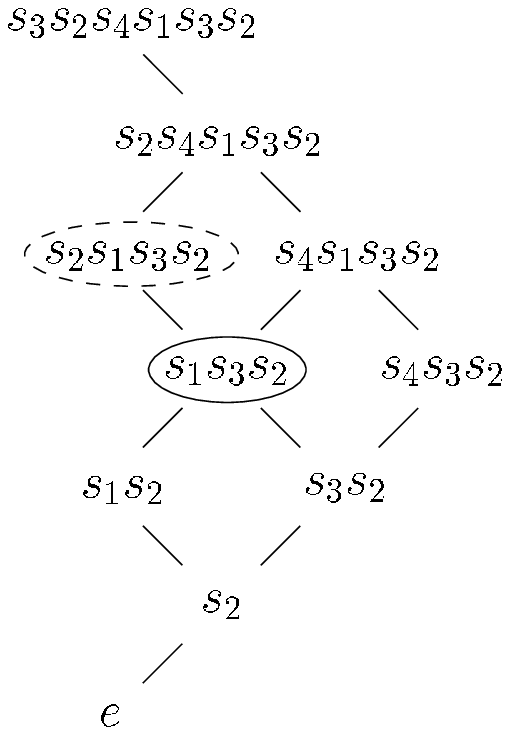}}
\caption{The map $\phi$ sends the indicated order ideals to the indicated coset representatives. }
\label{fig: mainTheorem}
\end{center}
\vspace{0in}
\end{figure}

\begin{proof}

\noindent \textit{(i)} By Proposition 2.6 in \cite{Stembridge2}, $w_0^J$ is the unique maximal element of $(W^J, <_L)$.  It follows that if $x \in W^J$, $x \leq_L w_0^J$.  To see that the converse also holds, let $x_0$ be the longest element of $W_J$, and note that $x \in W^J$ if and only if $xx_0$ is reduced (i.e. if and only if the product of a reduced word for $x$ and a reduced word for $x_0$ is necessarily a reduced word for $xx_0$).  If $x \leq_L w_0^J$, then there exists a reduced word for $x$ that is the final segment of a reduced word for $w_0^J$.  Since $w_0^J x_0$ is reduced, there exists a reduced word for $x$ and a reduced word for $x_0$ such that their product is a reduced word for $xx_0$, and it follows from the fact that all reduced words for the same element are of the same length that $xx_0$ is reduced.  We may conclude that $x \in W^J$, so, in general, $\lbrace x \in W: x \leq_L w_0^J \rbrace = (W^J, <_L)$.  However, $J(P_{w_0^J}) \cong \lbrace x \in W: x \leq_L w_0^J \rbrace$, and $(W^J, <_L) = (W^J, <_B) \cong J(P_V)$ by Definition~\ref{minusculedefn} and Theorems~\ref{ProctorLemma1} and \ref{minusculelattice}, so $J(P_{w_0^J}) \cong J(P_V)$, and it follows that $P_{w_0^J} \cong P_V$ is an isomorphism of posets, as desired.   \\ 

\noindent \textit{(ii)} Because $(W^J, <_L) = (W^J, <_B)$, it suffices to prove the claim with $(W^J, <_L)$ in place of \\
$(W^J, <_B)$.  Following the notation in the proof of Theorem~\ref{fullycommutes}, for all $1 \leq k \leq n$, let $C_k$ be the set of all heap elements labeled by $s_k$, and let $t'_k$ be the toggle group element defined by $t'_k = \prod_{p \in C_k} t_p$.  From section 5, we know that $C_k$ is totally ordered, and, by definition of $P_{w_0^J}$, no two elements of $C_k$ share a covering relation, so it follows that $t'_k$ is well-defined for all $k$.  Now let $I$ be an order ideal in $J(P_{w_0^J})$, let $w = \phi(I)$, and let $\ell$ denote the length of the longest coset representative $w_0^J$.  Consider the following lemmas: 

\begin{lem}\label{shortlemma}
The order ideal $t'_k(I)$ disagrees with $I$ on at most one vertex of $P_{w_0^J}$.  
\end{lem}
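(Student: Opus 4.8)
The plan is to exploit the fact that $C_k$ is a chain whose elements are ``spread out'' in $P_{w_0^J}$, so that the individual toggles comprising $t'_k$ cannot interfere with one another. Following the notation from the proof of Theorem~\ref{fullycommutes}, write $C_k = \lbrace \gamma_{k,1} < \gamma_{k,2} < \cdots < \gamma_{k,m} \rbrace$, where $m = \nu(k, w_0^J)$. Since $t'_k$ toggles only at elements of $C_k$, the symmetric difference $I \Delta\, t'_k(I)$ is contained in $C_k$, and the assertion of the lemma is equivalent to $\lvert I \Delta\, t'_k(I)\rvert \leq 1$. Because $C_k$ is totally ordered and $I$ is an order ideal, $I \cap C_k$ is a down-set of the chain, so $I \cap C_k = \lbrace \gamma_{k,1}, \ldots, \gamma_{k,b} \rbrace$ for some $0 \leq b \leq m$.

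First I would record the key structural fact that no element of $C_k$ covers another: between any two consecutive occurrences $\gamma_{k,h}, \gamma_{k,h+1}$ of $s_k$ in a reduced word for $w_0^J$ there must lie a generator not commuting with $s_k$ (otherwise the two copies of $s_k$ could be brought together, contradicting reducedness), so there is an element $p \in P_{w_0^J}$ with $\gamma_{k,h} < p < \gamma_{k,h+1}$. Consequently every upper cover and every lower cover of any $\gamma_{k,h}$ lies outside $C_k$.

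The heart of the argument is then that the toggles $t_{\gamma_{k,h}}$ act independently. Whether $\gamma_{k,h}$ is removable (lies in $Z$ of the current ideal) or addable (lies in $U$ of the current ideal) is governed entirely by the membership of $\gamma_{k,h}$ together with the membership of its covers; since all those covers lie outside $C_k$, and $t'_k$ never alters any element outside $C_k$, the status of each cover is fixed throughout the application of $t'_k$. Hence toggling one chain element cannot change the removability or addability of any other chain element, and I may analyze the toggles one at a time against the fixed external data $I \setminus C_k$. Using the intermediate element $p$ from the previous paragraph, I would show that for $h < b$ the element $\gamma_{k,h}$ fails to be maximal in $I$ (hence is not removable) and for $h > b+1$ the element $\gamma_{k,h}$ has an element below it outside $I$ (hence is not addable); thus the only candidates to be toggled are $\gamma_{k,b}$ and $\gamma_{k,b+1}$.

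Finally I would rule out the possibility that $\gamma_{k,b}$ and $\gamma_{k,b+1}$ are both toggleable. Taking an intermediate element $p$ with $\gamma_{k,b} < p < \gamma_{k,b+1}$: if $\gamma_{k,b+1}$ were addable, then every element below it, in particular $p$, would lie in $I$, forcing $\gamma_{k,b}$ to fail to be maximal in $I$ and hence to be non-removable. Therefore at most one element of $C_k$ is toggled, so $t'_k(I)$ disagrees with $I$ on at most one vertex of $P_{w_0^J}$. I expect the main obstacle to be the independence step, namely making precise that executing one toggle within $C_k$ cannot activate another; this is exactly where the absence of covering relations among the elements of $C_k$ is indispensable.
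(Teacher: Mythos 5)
Your proof is correct, but it takes a genuinely different route from the paper's. The paper's argument is dynamic: it applies the toggles in increasing order along the chain $C_k$, notes that if $t'_k(I) \neq I$ then some $p_0 \in C_k$ must lie in $Z(I) \cup U(I)$ (without loss of generality $p_0 \in Z(I)$), and then checks that $p_0$ itself blocks every other toggle --- while $p_0$ is still in the ideal, no $p < p_0$ can be removed (and each such $p$, lying in $I$, cannot be added), and once $p_0$ has been removed, no $p > p_0$ can be added (and each such $p$, lying outside $I$, cannot be removed). That argument needs nothing beyond the total ordering of $C_k$ established in the proof of Theorem~\ref{fullycommutes}. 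Your argument is static: you first prove that the toggles constituting $t'_k$ act independently, because every cover of an element of $C_k$ lies outside $C_k$ and is therefore untouched by $t'_k$, and you then show that at most one element of $C_k$ belongs to $Z(I) \cup U(I)$, using the element $p \notin C_k$ strictly between consecutive members of the chain. Your route is longer, but it buys two things the paper leaves implicit: it supplies a proof (via the commutation argument on consecutive occurrences of $s_k$ in a reduced word) that no two elements of $C_k$ share a covering relation --- a fact the paper asserts essentially without justification, and which is also what makes $t'_k$ well defined --- and it yields the stronger, order-free conclusion that at most one element of $C_k$ is toggleable in $I$ itself, with the candidates pinned down as $\gamma_{k,b}$ and $\gamma_{k,b+1}$. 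This stronger conclusion genuinely requires your intermediate-element fact: in the critical case where $\gamma_{k,b}$ is removable, the paper's sequential argument merely observes that $\gamma_{k,b+1}$ ceases to be addable after $\gamma_{k,b}$ is removed, whereas you rule out the possibility that the two are simultaneously active in $I$ at all.
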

\begin{proof}
It suffices to show that if there exists one vertex on which the two disagree, then there cannot exist any other such vertices.  If there exists a vertex on which the two disagree, then there must exist a vertex $p_0$ labeled by $s_k$ such that $p_0 \in Z(I)$ or $p_0 \in U(I)$.  Without loss of generality, let $p_0 \in Z(I)$, and assume that the toggles $t_p$ are applied to $I$ in order of increasing $p$.  Then for all $p \neq p_0$, the toggle at $p$ has no effect, for $p_0$ is in the order ideal when $t_p$ is applied if and only if $p< p_0$.  
\end{proof}

\begin{lem}\label{apple}
There exists an element $p_0 \in P$ such that $p_0 \in Z(I)$ if and only if $s_kw$ is not reduced.  In this case, if $s_{i_{l(s_kw)}} \cdots s_{i_2} s_{i_1}$ is a reduced word for $s_kw$, then $s_k s_{i_{l(s_kw)}} \cdots s_{i_2} s_{i_1}$ is a reduced word for $w$, and $\phi(I \setminus \lbrace p_0 \rbrace) = s_{i_{l(s_kw)}} \cdots s_{i_2} s_{i_1}$.  
\end{lem}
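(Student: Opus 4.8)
The plan is to translate the statement into the language of left weak order and to exploit the explicit description of $\phi^{-1}$ recorded in the proof of Theorem~\ref{fullycommutes}, namely $\phi^{-1}(x) = \bigcup_{k} \lbrace \gamma_{k,h} : 1 \le h \le \nu(k,x) \rbrace$. Throughout I will write $w = \phi(I)$ and use that $\phi$ is a rank- and order-preserving bijection, so that $l(w) = |I|$ and, for $x \leq_L w_0^J$, one has $x \leq_L w$ if and only if $\phi^{-1}(x) \subseteq I$. Since the elements of $W^J$ are fully commutative (Theorem~\ref{minusculelattice}), the counts $\nu(k,\cdot)$ are well-defined, and since $C_k$ is a chain, $I \cap C_k$ is the initial segment $\lbrace \gamma_{k,1}, \ldots, \gamma_{k,\nu(k,w)} \rbrace$. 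In particular at most one element of $C_k$ can be maximal in $I$, which already gives uniqueness of the candidate $p_0$ and locates it as $\gamma_{k,\nu(k,w)}$.

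For the equivalence I would argue both directions around the observation that $s_k w$ fails to be reduced exactly when $s_k w <_L w$. First I would handle the easier implication: if there is $p_0 \in Z(I)$ labeled $s_k$, choose a linear extension of $I$ placing $p_0$ last; then $w = \phi(I)$ factors as a reduced word $s_k \cdot \phi(I \setminus \lbrace p_0 \rbrace)$, exhibiting $s_k$ as a left descent of $w$, so $s_k w$ is not reduced. For the converse, if $s_k w$ is not reduced then $w = s_k \cdot (s_k w)$ is reduced, whence $s_k w \leq_L w \leq_L w_0^J$ lies in $W^J$ (using Theorem~\ref{mainTheorem}\textit{(i)}) and $\phi^{-1}(s_k w) \subseteq I$. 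Comparing the explicit inverses, and using that passing from $w$ to $s_k w$ lowers only the $s_k$-count, so that $\nu(k, s_k w) = \nu(k,w) - 1$ while $\nu(k', s_k w) = \nu(k', w)$ for $k' \neq k$, I conclude that $I \setminus \phi^{-1}(s_k w)$ is the single vertex $\gamma_{k,\nu(k,w)}$, which is labeled $s_k$ and, being the one element whose removal from $I$ still leaves an order ideal, lies in $Z(I)$. This produces the required $p_0$.

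Finally, for the ``in this case'' clause I would note that $I \setminus \lbrace p_0 \rbrace = \phi^{-1}(s_k w)$, so $\phi(I \setminus \lbrace p_0 \rbrace) = s_k w$; then, given any reduced word $s_{i_{l(s_kw)}} \cdots s_{i_1}$ for $s_k w$, prepending $s_k$ yields a word for $s_k \cdot s_k w = w$ of length $1 + l(s_k w) = l(w)$, hence a reduced word, and the displayed equality for $\phi(I \setminus \lbrace p_0 \rbrace)$ is immediate. I expect the only delicate point to be the forward implication, specifically verifying that the vertex singled out by the drop in the $s_k$-count is genuinely maximal in $I$, so that it lies in $Z(I)$ rather than merely being the topmost element of $C_k \cap I$; this is exactly where the order-compatibility of $\phi$ — that $\phi^{-1}(s_k w)$ is itself an order ideal contained in $I$ with one fewer element — does the real work.
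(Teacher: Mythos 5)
Your proof is correct, and your forward implication (from $p_0 \in Z(I)$ to $s_kw$ not reduced) is essentially the paper's: both place $p_0$ last in a linear extension of $I$ and read off a reduced word for $w$ beginning with $s_k$. The two arguments diverge on the converse, which is where the real content lies. The paper exploits the freedom to re-choose the reduced word from which the heap is built: it prepends $s_k$ to a reduced word for $s_kw$, extends to a reduced word for $w_0^J$, assumes without loss of generality that $P_{w_0^J}$ is constructed from this word, and then observes directly that the vertex in position $l(w)$ is maximal in $I = \phi^{-1}(w)$. You instead keep the heap fixed and work entirely inside the machinery of the proof of Theorem~\ref{fullycommutes}: since $s_kw <_L w \leq_L w_0^J$, the element $s_kw$ lies in $W^J$ by Theorem~\ref{mainTheorem}\textit{(i)} (hence is fully commutative by Theorem~\ref{minusculelattice}, so the counts $\nu(\cdot, s_kw)$ are well-defined), the explicit formula $\phi^{-1}(x) = \bigcup_{k'}\lbrace \gamma_{k',h} : 1 \leq h \leq \nu(k',x)\rbrace$ shows that $I \setminus \phi^{-1}(s_kw)$ is the single vertex $\gamma_{k,\nu(k,w)}$, and the fact that $\phi^{-1}(s_kw)$ is itself an order ideal forces that vertex to lie in $Z(I)$. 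What your route buys is that the maximality of $p_0$ becomes a formal consequence of the poset isomorphism (removing a non-maximal element never leaves an order ideal), and uniqueness of the candidate $p_0$ comes for free from $C_k$ being a chain; the cost is heavier reliance on the explicit inverse map and on part \textit{(i)} of Theorem~\ref{mainTheorem} — which is legitimate, since that part is established before the lemma is invoked. The paper's route is more hands-on but silently leans on the equivalence of heaps of different reduced words and on the invariance of the claim under that identification. One small slip in your write-up: the ``delicate point'' you flag at the end (that the vertex singled out by the drop in the $s_k$-count is genuinely maximal in $I$) belongs to the converse direction, not the forward one; your own argument resolves it correctly in any case.
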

\begin{proof}
If $p_0 \in Z(I)$, let $s_{i_\ell} \cdots s_{i_2} s_{i_1}$ be a reduced word for $w_0^J$, and assume that the heap $P_{w_0^J}$ is built with reference to this particular reduced word (recall that the heaps of every reduced word for $w_0^J$ are equivalent).  Since $p_0 \in Z(I)$, $I \setminus \lbrace p_0 \rbrace$ is an order ideal of $P_{w_0^J}$.  Let $(\rho(1), \rho(2), \ldots, \rho(|I|-1))$ be a linear extension of $I \setminus \lbrace p_0 \rbrace$ (i.e. a linear extension of the poset with vertices in $I \setminus \lbrace p_0 \rbrace$ and partial order given by the restriction of the partial order on $P_{w_0^J}$ to $I \setminus \lbrace p_0 \rbrace$).  Then $(\rho(1), \rho(2), \ldots, \rho(|I|-1), p_0)$ is a linear extension of $I$.  Let $\rho(|I|) = p_0$, and extend this linear extension to a linear extension of $P_{w_0^J}$, $(\rho(1), \rho(2), \ldots, \rho(\ell))$.  By definition of $\phi$, $s_{i_{\rho(|I|)}} s_{i_{\rho(|I|-1)}} \cdots s_{i_{\rho(2)}} s_{i_{\rho(1)}}$ is a reduced word for $w$.  Since $p_0$ is labeled by $s_k$, $s_{i_{p_0}} = s_k$, so it follows that $s_kw = s_{i_{\rho(|I|-1)}} \cdots s_{i_{\rho(2)}} s_{i_{\rho(1)}}$.  This implies that $s_kw$ is not reduced.  

If $s_kw$ is not reduced, let $s_{i_{l(s_kw)}} \cdots s_{i_2} s_{i_1}$ be a reduced word for $s_kw$.  Note that $s_k s_{i_{l(s_kw)}} \cdots s_{i_2} s_{i_1}$ is a reduced word for $w$, else $l(w) = l(s_k s_{i_{l(s_kw)}} \cdots s_{i_2} s_{i_1}) < l(s_{i_{l(s_kw)}} \cdots s_{i_2} s_{i_1}) = l(s_kw) < l(w)$, which is absurd.  Let $i_{l(w)}=k$, and extend this word to a reduced word for $w_0^J$, $s_{i_\ell} \cdots s_{i_2} s_{i_1}$.  Without loss of generality, we may assume that the heap $P_{w_0^J}$ is built with reference to this particular reduced word, in which case the vertex corresponding to $s_{i_{l(w)}}$ is maximal over the vertices in the order ideal $\phi^{-1}(w)=I$.  This implies that there exists an element of $P_{w_0^J}$ labeled by $s_k$ that belongs to $Z(I)$ and $\phi(I \setminus \lbrace p_0 \rbrace) = s_{i_{l(s_kw)}} \cdots s_{i_2} s_{i_1}$.    
\end{proof}

\begin{lem}\label{pear}
There exists an element $p_0 \in P$ such that $p_0 \in U(I)$ if and only if $s_kw$ is reduced and $s_kw \in W^J$.  In this case, if $s_{i_{l(w)}} \cdots s_{i_2} s_{i_1}$ is a reduced word for $w$, then $s_k s_{i_{l(w)}} \cdots s_{i_2} s_{i_1}$ is a reduced word for $s_kw$, and $\phi(I \cup \lbrace p_0 \rbrace) = s_k s_{i_{l(w)}} \cdots s_{i_2} s_{i_1}$.  
\end{lem}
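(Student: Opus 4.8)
The plan is to argue dually to Lemma~\ref{apple}, exploiting the fact that adjoining to $I$ a new maximal element labeled $s_k$ corresponds, under $\phi$, to left-multiplication of $\phi(I)$ by $s_k$ (in contrast to Lemma~\ref{apple}, where \emph{deleting} such an element corresponded to passing from $w$ to the shorter word $s_k w$).

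First I would handle the forward direction. Suppose some $p_0 \in U(I)$ is labeled by $s_k$. Then $I \cup \lbrace p_0 \rbrace$ is an order ideal of $P_{w_0^J}$, and since $p_0 \in U(I)$ every element below $p_0$ already lies in $I$. Choosing a linear extension $(\rho(1), \ldots, \rho(|I|))$ of $I$ and appending $p_0$ therefore yields a linear extension of $I \cup \lbrace p_0 \rbrace$, so by Definition~\ref{mapPhi1} we get $\phi(I \cup \lbrace p_0 \rbrace) = s_{i_{p_0}} s_{i_{\rho(|I|)}} \cdots s_{i_{\rho(1)}} = s_k w$. Because $\phi$ takes values in $W^J$ and always produces reduced words, it follows at once that $s_k w$ is reduced and $s_k w \in W^J$. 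The reduced-word formula is then immediate: since $l(s_k w) = l(w) + 1$, prepending $s_k$ to any reduced word $s_{i_{l(w)}} \cdots s_{i_1}$ for $w$ yields a reduced word for $s_k w$, and $\phi(I \cup \lbrace p_0 \rbrace) = s_k s_{i_{l(w)}} \cdots s_{i_1}$.

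For the converse, I would invoke the explicit inverse $\phi^{-1}(x) = \bigcup_{j} \lbrace \gamma_{j,h} : 1 \leq h \leq \nu(j,x) \rbrace$ constructed in the proof of Theorem~\ref{fullycommutes}. Assuming $s_k w$ is reduced and $s_k w \in W^J$, prepending $s_k$ to a reduced word for $w$ gives one for $s_k w$, so $\nu(k, s_k w) = \nu(k, w) + 1$ while $\nu(j, s_k w) = \nu(j, w)$ for $j \neq k$. Hence $\phi^{-1}(s_k w) = I \cup \lbrace \gamma_{k, \nu(k,w)+1} \rbrace$, and setting $p_0 = \gamma_{k, \nu(k,w)+1}$ exhibits the desired vertex: it is labeled by $s_k$, and, being the unique element of the order ideal $\phi^{-1}(s_k w) \supsetneq I$ not already in $I$, it lies in $U(I)$. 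The equality $\phi(I \cup \lbrace p_0 \rbrace) = s_k w$ follows.

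The step I expect to be the main obstacle is the proper handling of the hypothesis $s_k w \in W^J$, which has no analogue in Lemma~\ref{apple} (there, $s_k w$ is \emph{shorter} than $w$, hence automatically remains $\leq_L w_0^J$). I would need this hypothesis precisely to guarantee that the candidate vertex $\gamma_{k, \nu(k,w)+1}$ genuinely exists in the chain $C_k$ of the heap: since $s_k w \in W^J$ means $s_k w \leq_L w_0^J$ by Theorem~\ref{mainTheorem}(\textit{i}), every generator occurs at most as often in $s_k w$ as in $w_0^J$, so $\nu(k,w) + 1 = \nu(k, s_k w) \leq \nu(k, w_0^J) = |C_k|$ and the index is legitimate. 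Without the $W^J$ condition, $s_k w$ could be reduced yet fail to lie below $w_0^J$, in which case no such $p_0$ exists; making this dependence explicit is what distinguishes the present lemma from its predecessor.
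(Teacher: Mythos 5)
Your proof is correct, and it fills in what the paper leaves implicit: the paper's entire proof of this lemma is the single sentence that it is ``analogous to Lemma~\ref{apple},'' with a remark claiming the $Z(I)$ and $U(I)$ cases are symmetric. Your forward direction is exactly the intended dualization (append $p_0$ to a linear extension of $I$, read off the word, and use that final segments of reduced words for $w_0^J$ are reduced and that the image of $\phi$ is $W^J$). Your converse, however, takes a genuinely different route from the analogue of Lemma~\ref{apple}: where the paper's template would prepend $s_k$ to a reduced word for $w$, extend it to a reduced word for $w_0^J$, and rebuild the heap with reference to that word so that the new vertex sits visibly atop $I$, you instead invoke the explicit inverse $\phi^{-1}(x)=\bigcup_{j}\lbrace \gamma_{j,h}: 1\leq h\leq \nu(j,x)\rbrace$ and compute $\phi^{-1}(s_kw)=I\cup\lbrace\gamma_{k,\nu(k,w)+1}\rbrace$ directly. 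This buys two things: it avoids the (harmless but slightly delicate) re-choosing of the heap's defining word, and it forces you to confront exactly where the hypothesis $s_kw\in W^J$ enters --- namely, to guarantee $s_kw\leq_L w_0^J$ so that $\phi^{-1}$ applies and the index $\nu(k,w)+1\leq |C_k|$ is legitimate. That last observation is a real contribution: the paper's assertion that the two lemmas' arguments are ``identical'' by symmetry elides precisely this asymmetry (in Lemma~\ref{apple} the shorter element $s_kw\leq_L w$ automatically lies below $w_0^J$, whereas here it must be assumed), and your write-up makes the dependence explicit.
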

\begin{proof}
This result is analogous to Lemma~\ref{apple}.    
\end{proof} 

Now we take up the proof of Theorem~\ref{mainTheorem}.  

\begin{itemize}

\item If $s_kw$ is not reduced, then, by Lemma~\ref{apple}, there exists an element $p_0 \in P$ labeled by $s_k$ such that $p_0 \in Z(I)$, so, by Lemma~\ref{shortlemma}, $\prod_{p \in C_k} t_p(I) = I \setminus \lbrace p_0 \rbrace$.  If $s_{i_{l(s_kw)}} \cdots s_{i_2} s_{i_1}$ is a reduced word for $s_kw$, it follows from Lemma~\ref{apple} that $s_k s_{i_{l(s_kw)}} \cdots s_{i_2} s_{i_1}$ is a reduced word for $w$ and $\phi(I \setminus \lbrace p_0 \rbrace) = s_{i_{l(s_kw)}} \cdots s_{i_2} s_{i_1}$, as desired.  
\item If $s_kw$ is reduced and $s_kw \in W^J$, then, by Lemma~\ref{pear}, there exists an element $p_0 \in P$ labeled by $s_k$ such that $p_0 \in U(I)$, so, by Lemma~\ref{shortlemma}, $\prod_{p \in C_k} t_p(I) = I \cup \lbrace p_0 \rbrace$.  If $s_{i_{l(w)}} \cdots s_{i_2} s_{i_1}$ is a reduced word for $w$, it follows from Lemma~\ref{pear} that $s_k s_{i_{l(w)}} \cdots s_{i_2} s_{i_1}$ is a reduced word for $s_kw$ and $\phi(I \cup \lbrace p_0 \rbrace) = s_k s_{i_{l(w)}} \cdots s_{i_2} s_{i_1}$, as desired.  
\item If $s_kw$ is reduced and $s_kw \notin W^J$, it follows from Lemmas~\ref{apple} and \ref{pear} that no elements of $P_{w_0^J}$ labeled by $s_k$ belong to $Z(I)$ or $U(I)$, so, by Lemma~\ref{shortlemma}, $\prod_{p \in C_k} t_p(I) = I$.  However,  $s_kw$ covers $w$ in the left Bruhat order on $W$, so, by Corollary 2.5.2 in Bj\"orner-Brenti \cite{Bjorner}, it follows that $s_kw=ws_j$, where $j \in J$.  Since $s_j \in W_J$, we may conclude that $s_kwW_J = ws_jW_J = wW_J$, as desired.  

\end{itemize}

\end{proof}

\begin{rem}\rm
In the proofs of Lemmas~\ref{shortlemma} and \ref{pear}, we suppressed the cases in which elements of $P_{w_0^J}$ were given or shown to belong to $U(I)$ rather than $Z(I)$ because the conditions are symmetric, so the arguments are identical.  However, we originally wrote out proofs that address both cases independently, and these may be found in the REU report \cite{RushShi}.    
\end{rem}

We proceed to the proofs of Theorems~\ref{secondary1} and \ref{secondary2}.  

\subsection{Proof of Theorem~\ref{secondary2}}

Let $P_V$ be a minuscule poset, and label each element of $P_V$ by the label of the corresponding element of $P_{w_0^J}$.  From Theorem~\ref{mainTheorem}, it follows that, for all $1 \leq k \leq n$, the following diagram is commutative:

$$
\begin{array}{rcl}
J(P_V) &\overset{\phi}{\longrightarrow}& W^J\\
t'_k \downarrow& & s_k \downarrow \\
J(P_V) &\overset{\phi}{\longrightarrow}& W^J.\\
\end{array}
$$

For any ordering of $S = (s_{i_1}, s_{i_2}, \ldots, s_{i_n})$, $c = s_{i_1} s_{i_2} \cdots s_{i_n}$ and $t_{(i_1,i_2, \ldots, i_n)} = t'_{i_1} t'_{i_2} \cdots t'_{i_n}$, so Theorem~\ref{secondary2} follows immediately.  \qed

\subsection{Proof of Theorem~\ref{secondary1}}

Let $P_V$ be a minuscule poset, and again label each element of $P_V$ by the label of the corresponding element of $P_{w_0^J}$.  Theorem~\ref{mainTheorem} embeds the Weyl group $W$ as a subgroup of the toggle group $G(P_V)$.  In light of Theorem~\ref{secondary2}, since the Coxeter elements are known to be pairwise conjugate in $W$, it suffices to exhibit a particular ordering $S= (s_{i_1}, s_{i_2}, \ldots, s_{i_n})$ such that $t_{(i_1,i_2, \ldots, i_n)} = t'_{i_1} t'_{i_2} \cdots t'_{i_n}$ is conjugate to $\Psi$ in $G(P_V)$.  However, in section 2, we saw that $t_{\text{even}} t_{\text{odd}}$ is conjugate to $\Psi$ in $G(P_V)$.  What we prove here is that there exists an ordering $(s_{i_1}, s_{i_2}, \ldots, s_{i_n})$ such that the toggle group elements $t'_{i_1} t'_{i_2} \cdots t'_{i_n}$ and $t_{\text{even}} t_{\text{odd}}$ are equal.  

We start with two lemmas:

\begin{lem}\label{minusculeRanked}
If $P$ is a minuscule poset, then $P$ is a ranked poset.  
\end{lem}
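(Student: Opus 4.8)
The plan is to transport the problem to the minuscule heap and rank it by the \emph{heights} of the roots attached to its elements. By Theorem~\ref{mainTheorem}\textit{(i)}, the minuscule poset $P_V$ is isomorphic to the heap $P_{w_0^J}$, so it suffices to exhibit a rank function on $P_{w_0^J}$. Fix a reduced word $s_{i_1} s_{i_2} \cdots s_{i_\ell}$ for $w_0^J$ with respect to which the heap is built, and to the heap element $j$ attach the positive root $\beta_j := s_{i_1} s_{i_2} \cdots s_{i_{j-1}}(\alpha_{i_j})$. These are precisely the inversions of $w_0^J$, and since $w_0^J$ is reduced they are pairwise distinct; writing $\beta_j = \sum_k c_{j,k}\,\alpha_k$ in the basis of simple roots, each $c_{j,k}$ is a nonnegative integer, so $\mathrm{ht}(\beta_j) := \sum_k c_{j,k} \geq 1$ is well defined.

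I would then define the candidate rank function by $r(j) := \mathrm{ht}(\beta_j) - 1$ and verify the two defining properties. The boundary condition is direct: the minimal elements of $P_{w_0^J}$ are exactly the heap elements $j$ for which no non-commuting letter precedes $s_{i_j}$, and for such $j$ one has $\beta_j = \alpha_{i_j}$, a simple root of height $1$; hence $r(j) = 0$ on every minimal element, as required. It is also worth recording, as a supporting step, that the heap order agrees with the dominance order on the attached roots, i.e.\ that $j <_{P_{w_0^J}} j'$ forces $\beta_{j'} - \beta_j$ to be a nonnegative combination of simple roots; this is a standard feature of heaps of fully commutative elements and is consistent with the bijection $\phi$ of Definition~\ref{mapPhi1} and the length-additivity that its construction records.

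The crux is the covering step: if $j \lessdot j'$ is a covering relation in $P_{w_0^J}$, I must show that $\beta_{j'} - \beta_j$ is a \emph{single} simple root, so that $r(j') - r(j) = 1$. This is exactly where minusculeness is indispensable — it is false for general fully commutative heaps — and I expect it to be the main obstacle. A covering relation $j \lessdot j'$ joins heap elements whose labels $s_{i_j}, s_{i_{j'}}$ fail to commute, and the argument should push the minuscule hypothesis (equivalently, $\langle \mu, \alpha^\vee \rangle \in \{-1,0,1\}$ for every weight $\mu$ and root $\alpha$, or, via Theorem~\ref{minusculelattice} and Theorem~\ref{ProctorLemma1}, the fact that $P_{w_0^J}$ is the poset of join-irreducibles of the distributive lattice $(W^J,<_L)\cong Q_V$) to force the two roots to differ by precisely one simple root rather than by a longer positive combination. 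Granting this, $r$ satisfies both conditions in the paper's definition of a rank function, so $P_{w_0^J} \cong P_V$ is ranked.

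Finally, I would note that this particular rank function is the one to carry forward: because the Dynkin diagram of $W$ is a tree, and hence bipartite, the generators $s_k$ split into two color classes, and the height grading is compatible with this splitting in a way that should assign a single rank parity to all heap elements sharing a fixed label. This is the compatibility that will ultimately permit realizing the even/odd toggle decomposition $t_{\mathrm{even}} t_{\mathrm{odd}}$ by the generator-wise toggles $t'_k$ in the proof of Theorem~\ref{secondary1}.
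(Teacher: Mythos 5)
Your proposal does not complete the proof: the step you yourself identify as ``the crux'' --- that a covering relation $j \lessdot j'$ in the minuscule heap forces $\beta_{j'} - \beta_j$ to be a \emph{single} simple root --- is never argued, only deferred with ``Granting this.'' But that step is exactly where the entire content of the lemma lives; everything else in your write-up (attaching inversion roots to heap elements, checking minimal elements, noting heights are well defined) is routine and would apply verbatim to non-minuscule fully commutative elements, for which the conclusion fails. A proof must actually convert the minuscule hypothesis (say, $\langle \mu, \alpha^\vee\rangle \in \{-1,0,1\}$ for all weights $\mu$ of $V$ and all roots $\alpha$) into the statement that consecutive inversion roots along a heap cover differ by one simple root, and that argument is not sketched beyond naming which hypotheses one would ``push.'' There is also a concrete orientation error: with the paper's heap convention (Definition~\ref{heap} declares $j > j'$ in the heap when $j < j'$ as integers and the labels do not commute, so \emph{earlier} letters of the word sit \emph{higher}), the minimal elements of $P_{w_0^J}$ are the positions with no non-commuting letter \emph{after} them, and for those positions $\beta_j = s_{i_1}\cdots s_{i_{j-1}}(\alpha_{i_j})$ is generally not simple. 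Already for $A_2$, $\lambda = \omega_1$, $w_0^J = s_2 s_1$: the bottom heap element carries $\beta_2 = s_2(\alpha_1) = \alpha_1 + \alpha_2$ of height $2$, and your $r$ decreases along covers. One must instead use $\beta_j := s_{i_\ell} s_{i_{\ell-1}} \cdots s_{i_{j+1}}(\alpha_{i_j})$ (or read the word in the reverse order), which is fixable but shows the ``direct'' boundary check was not actually carried out against the paper's conventions.

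For comparison, the paper's proof is a two-line citation and avoids all of this: minuscule posets are Gaussian by Proctor (\cite{Proctor}, Theorem 6), and every Gaussian poset is ranked (Stanley \cite{Stanleyec}, Exercise 25(b)), with the rank function taken to be the one making the Gaussian product formula work. Your root-theoretic route is a legitimate and more self-contained alternative --- it is essentially the standard fact that minuscule posets are graded by root height --- and, as you note, it would produce the specific rank function whose parity behavior feeds into the $t_{\mathrm{even}} t_{\mathrm{odd}}$ decomposition later; but as written it assumes the one assertion that needed proving.
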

\begin{proof}
As discussed in the introduction, minuscule posets are Gaussian (cf. Proctor \cite{Proctor}, Theorem 6).  The fact that all Gaussian posets are ranked is recorded as Exercise 25(b) in Stanley \cite{Stanleyec}.  
\end{proof}

\begin{lem}\label{bipartiteDiagram}
If $W$ is the Weyl group of a complex simple Lie algebra $\mathfrak{g}$, then the Dynkin diagram of the associated root system is acyclic and therefore bipartite.  
\end{lem}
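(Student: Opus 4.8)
The plan is to separate the two assertions. The implication ``acyclic $\Rightarrow$ bipartite'' is purely graph-theoretic and essentially immediate: a finite acyclic graph is a forest, a forest contains no cycles and hence in particular no cycles of odd length, and a finite graph is bipartite if and only if it contains no odd cycle. (Concretely, one may 2-color each tree by the parity of graph distance to a fixed root.) So the genuine content of the lemma is the claim that the Dynkin diagram is acyclic, and that is where I would concentrate the argument.

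To prove acyclicity I would argue directly from the positive-definiteness of the root-system inner product rather than invoke the classification. Recall the standard facts: the simple roots $\alpha_1, \ldots, \alpha_n$ are linearly independent in $\mathfrak{h}^*$; the form $(\cdot,\cdot)$ restricts to a positive-definite form on their real span; and distinct simple roots satisfy $(\alpha_i, \alpha_j) \leq 0$, with $(\alpha_i, \alpha_j) \neq 0$ precisely when $i$ and $j$ are joined by an edge of the Dynkin diagram. Moreover the crystallographic (finite-type) condition forces, for any edge $\lbrace i,j \rbrace$, the normalized inner product $\left(\alpha_i/|\alpha_i|,\, \alpha_j/|\alpha_j|\right) \leq -\tfrac{1}{2}$, since $4\cos^2\theta_{ij} \in \lbrace 1,2,3 \rbrace$ whenever $\theta_{ij} \neq \pi/2$.

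The key step would then be a short quadratic-form computation. Suppose for contradiction that the diagram contains a cycle on the vertices $i_1, i_2, \ldots, i_k$ (indices read cyclically, so that $\lbrace i_j, i_{j+1} \rbrace$ is an edge for each $j$). Set $e_j := \alpha_{i_j}/|\alpha_{i_j}|$ and $v := e_1 + e_2 + \cdots + e_k$. Expanding,
$$
(v,v) = \sum_{j=1}^{k} (e_j, e_j) + 2 \sum_{1 \leq j < l \leq k} (e_j, e_l) = k + 2 \sum_{1 \leq j < l \leq k} (e_j, e_l).
$$
The $k$ cycle edges each contribute a summand $\leq -\tfrac{1}{2}$ to the sum over $j<l$, while all remaining cross terms are $\leq 0$ because distinct simple roots have non-positive inner product; hence $(v,v) \leq k + 2 \cdot k \cdot (-\tfrac{1}{2}) = 0$. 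But $v \neq 0$, as the $e_j$ are nonzero scalar multiples of the linearly independent simple roots, so positive-definiteness forces $(v,v) > 0$, a contradiction. Therefore no cycle exists and the diagram is acyclic.

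I expect the only delicate point to be the uniform justification of the $-\tfrac{1}{2}$ edge bound, which rests on the integrality of the Cartan integers (equivalently, the classification of rank-two root subsystems). If one prefers to bypass even this, the entire lemma follows at once from the classification of Dynkin diagrams of complex simple Lie algebras, each of which is manifestly a tree; but the positive-definiteness argument above has the advantage of being case-free.
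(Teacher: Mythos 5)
Your proof is correct, but it is genuinely different from what the paper does: the paper offers no argument at all for this lemma, simply citing Chapter 1, Exercise 4 of Bj\"orner--Brenti \cite{Bjorner}, where acyclicity of the Coxeter graph of a finite Coxeter group is posed as an exercise. What you supply is the standard self-contained argument behind that exercise: reduce bipartiteness to acyclicity (a forest has no odd cycles), then rule out cycles by the quadratic-form computation showing that $v = e_1 + \cdots + e_k$ would satisfy $(v,v) \leq k + 2k\left(-\tfrac{1}{2}\right) = 0$, contradicting positive-definiteness and the linear independence of the simple roots. Your computation is sound --- each of the $k$ cycle edges contributes a distinct cross term at most $-\tfrac{1}{2}$ (since adjacent simple roots meet at angle $2\pi/3$, $3\pi/4$, or $5\pi/6$), and all other cross terms are nonpositive --- and in fact your argument needs only the weaker bound $\cos\theta_{ij} \leq -\tfrac{1}{2}$, not the full crystallographic restriction $4\cos^2\theta_{ij} \in \lbrace 1,2,3 \rbrace$; for a general finite Coxeter group one would instead use $\cos\theta_{ij} = -\cos(\pi/m_{ij}) \leq -\tfrac{1}{2}$ for $m_{ij} \geq 3$, so the same proof covers the Bj\"orner--Brenti setting. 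What your approach buys is a case-free, classification-independent proof, very much in the spirit of the uniform arguments this paper favors; what the paper's citation buys is brevity, at the cost of deferring the content to a reference where it is left as an exercise.
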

\begin{proof}
See Chapter 1, Exercise 4 in Bj\"orner-Brenti \cite{Bjorner}.  
\end{proof}

Let $r$ be the rank function for $P_V$.  For all $1 \leq k \leq n$, we claim that the ranks of all the vertices labeled by $s_k$ are of the same parity.  

For the proof, the key observation is that each covering relation in the heap $P_{w_0^J}$ corresponds to an edge of the Dynkin diagram of the associated root system.  (Recall that the partial order on $P_{w_0^J}$ is the transitive closure of the relations $j > j'$ for all $j < j'$ in integers such that $s_{i_j}$ and $s_{i_{j'}}$ do not commute; cf. Definition~\ref{heap}.)  Assume for the sake of contradiction that there exists a $k$ such that $j, j' \in P_{w_0^J}$ are both labeled by $s_k$ but $r(j')-r(j)$ is odd.  Without loss of generality, let $r(j')>r(j)$.  Since $C_k$ is totally ordered, it follows that $j'>j$, and that there exists a set of vertices $\lbrace j_1, j_2, \ldots, j_{2u} \rbrace$ such that $j'$ covers $j_1$, $j_{2u}$ covers $j$, and $j_i$ covers $j_{i+1}$ for all $1 \leq i \leq 2u-1$.  We may conclude that there exists a path of odd length in the Dynkin diagram from the vertex corresponding to the $k^{\text{th}}$ simple root to itself.  However, by Lemma~\ref{bipartiteDiagram}, the graph of the Dynkin diagram is bipartite, so this is impossible.  

Note that if $p, p' \in P_V$ and $r(p) \equiv r(p') \pmod{2}$, then $t_p$ and $t_{p'}$ commute in $G(P_V)$.  Let $S_{\text{odd}}$ be the set of all $k$ such that $s_k$ is a simple reflection and the rank of $p$ is odd for all vertices $p \in P_{w_0^J}$ labeled by $s_k$.  Similarly, let $S_{\text{even}}$ be the set of all $k'$ such that $s_{k'}$ is a simple reflection and the rank of $p$ is even for all vertices $p \in P_{w_0^J}$ labeled by $s_{k'}$.  It follows that $t_{\text{even}} t_{\text{odd}} = \prod_{k' \in S_{\text{even}}} t'_{k'} \prod_{k \in S_{\text{odd}}} t'_k$.  This completes the proof.  \qed  

\section{Plane Partitions, Preliminaries}

For the remainder of the paper, we shift the focus from the representation-theoretic aspects of the minuscule posets to their combinatorial properties.  We begin by recalling the Gaussian criterion from the introduction.  Since all Gaussian posets are ranked (as noted in the proof of Lemma~\ref{minusculeRanked}), we state it for ranked posets.

\begin{df}\label{Gaussian} \rm
Let $P$ be a ranked poset with rank function $r$.  $P$ is \textit{Gaussian} if, for all positive integers $m$, the following equality holds:
$$J(P \times [m];q) = \prod_{p \in P} \frac{1 - q^{m+r(p)+1}}{1 - q^{r(p)+1}}.$$

\end{df}  

\begin{rem}\rm
Not only are all minuscule posets Gaussian, but, interestingly enough, it is conjectured that all Gaussian posets are minuscule.
\end{rem}

The two most important known families of Gaussian posets are the first two infinite families of minuscule posets.  The order ideals of Cartesian products of these posets with chains may be identified with the combinatorial objects that we refer to as plane partitions.  Thus, in establishing Theorem~\ref{main2} for these cases, we are implicitly formulating new combinatorial identities for plane partitions that come already organized and explained.  

\begin{df} \label{planePartition} \rm
A \textit{plane partition} of an integer $x$ is a two-dimensional array of nonnegative integers $\lbrace x_{i, j} \rbrace_{i, j \geq 1}$ satisfying $x = \sum_{i, j \geq 1} x_{i,j}$ and $x_{i,j} \geq  x_{i, j+1}, x_{i+1, j} $ for all $i, j \geq 1$. 
\end{df}

We say that a plane partition $\lbrace x_{i, j} \rbrace$ is inside $k \times n \times m$ if $0 \leq x_{i, j} \leq m$ for all $1 \leq i \leq k$, $1 \leq j \leq n$, and $x_{i, j} = 0$ otherwise.  If we think of a plane partition as a polyhedron composed of stacks of unit cubes, with $x_{i,j}$ cubes stacked on the $(i, j)^\text{th}$ square for all $(i,j)$, then it should be clear that any such plane partition corresponds to an order ideal of $[k] \times [n] \times [m]$.  Conversely, any order ideal of $[k] \times [n] \times [m]$ corresponds to a plane partition inside $k \times n \times m$.  In other words, there is a canonical bijection between the plane partitions inside $k \times n \times m$ and the order ideals of $[k] \times [n] \times [m]$. 

\begin{thm}\label{MacMahon}
(MacMahon) The following generating function counts plane partitions $\pi$ inside $k \times n \times m$ by their cardinality: 
$$\sum_{\pi \subset k \times n \times m} q^{|\pi|} = \prod_{\substack{1 \leq i \leq k \\ 1 \leq j \leq n \\1\leq l \leq m}} \frac{[i + j + l - 1]_q}{[i + j + l - 2]_q}. $$
\end{thm}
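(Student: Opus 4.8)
The plan is to prove this classical generating-function identity by the method of nonintersecting lattice paths combined with the Lindström–Gessel–Viennot lemma, which converts the count of plane partitions into a determinant evaluation. First I would recall the standard bijection between plane partitions $\pi$ inside $k \times n \times m$ and families of nonintersecting lattice paths: reading the plane partition along its diagonals (or equivalently, slicing the solid cube-stack into its horizontal level sets) produces a tuple of $k$ monotone lattice paths in a grid of width $n$ and height $m$, whose non-crossing condition is exactly the column-and-row weak-monotonicity of Definition~\ref{planePartition}. The total volume $|\pi| = \sum_{i,j} x_{i,j}$ translates into a weight on each path family, recorded by the variable $q$ via the area enclosed beneath the paths.

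Next I would apply the Lindström–Gessel–Viennot lemma to express the weighted sum $\sum_{\pi \subset k \times n \times m} q^{|\pi|}$ as a $k \times k$ determinant whose entries are $q$-binomial coefficients $\genfrac{[}{]}{0pt}{}{n+m}{\cdot}_q$ counting single weighted paths between fixed endpoints. The bulk of the argument then reduces to evaluating this determinant in closed form. I would carry out this evaluation by recognizing the matrix as a $q$-analogue of a Cauchy-type or Vandermonde-type determinant, extracting common factors from the rows and columns so that what remains is (a $q$-deformation of) a Vandermonde determinant in suitable variables. After the Vandermonde factor is computed and recombined with the extracted prefactors, the result should telescope into the displayed product $\prod \frac{[i+j+l-1]_q}{[i+j+l-2]_q}$; the telescoping is most transparent if the indices are grouped so that the factor $[i+j+l-2]_q$ in the denominator cancels against the numerator of an adjacent term.

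The main obstacle I anticipate is the determinant evaluation itself: matching the closed-form $q$-binomial determinant against the triple product requires a careful bookkeeping of how the hook-length-type factors distribute over the three axes $i, j, l$, and it is easy to misalign an index shift and obtain a formula off by a power of $q$ or by a boundary factor. An alternative, and perhaps cleaner, route would bypass lattice paths entirely and instead proceed by induction on $m$, peeling off the top layer of the plane partition: one shows the generating function satisfies a recurrence in $m$ obtained by summing over the possible top-level order ideals, and then verifies that the proposed product satisfies the same recurrence with the same base case $m=0$ (where both sides equal $1$). This inductive approach trades the determinant evaluation for a $q$-binomial summation identity, and I would be inclined to present whichever of the two keeps the index manipulations most contained. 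Either way, the combinatorial heart — the bijection of Definition~\ref{planePartition} with nonintersecting paths or layered ideals — is routine; the genuine work is purely the $q$-series manipulation that produces the stated product.
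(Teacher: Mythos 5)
The paper does not actually prove this theorem: it is quoted as MacMahon's classical formula, and its only role here is recorded in the remark immediately following it --- the identity is equivalent to the statement that $[k]\times[n]$ is Gaussian, which the paper already has, uniformly for all minuscule posets, from Proctor's Theorem 6 (via standard monomial theory). So the route most consistent with the paper's own logic is a two-line appeal to that equivalence, and any self-contained combinatorial proof you supply is automatically a different route.

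Your proposed route --- nonintersecting lattice paths plus Lindstr\"om--Gessel--Viennot, reducing the generating function to a $q$-binomial determinant --- is a standard and legitimate way to prove the formula, and the fallback induction on $m$ can also be made to work. But as written it is a plan rather than a proof: the determinant is never displayed, the factor extraction and Vandermonde-type evaluation are never carried out, and the telescoping into $\prod [i+j+l-1]_q/[i+j+l-2]_q$ is asserted rather than verified. You correctly identify the determinant evaluation as ``the genuine work,'' and that is exactly the part that is missing; it is also the step where, as you yourself warn, index shifts and stray powers of $q$ derail such arguments, so it cannot be waved through. Two smaller points also need to be addressed explicitly: first, the $q$-weighted LGV lemma requires the weight to be multiplicative over path steps and preserved by the path-swapping involution (true for the volume statistic, but it must be checked, not assumed); second, converting $|\pi|=\sum_{i,j}x_{i,j}$ into an area-under-paths statistic introduces a global power of $q$ whose cancellation should be tracked. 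If you complete the determinant evaluation (for instance along the lines of Krattenthaler's determinant calculus), or replace the whole scheme by the principal specialization of a Schur function together with the hook-content formula --- arguably the shortest complete write-up --- the argument closes; until then there is a genuine gap.
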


\begin{rem}\rm
It follows that the MacMahon formula is also the rank-generating function for the poset of order ideals of $[k] \times [n] \times [m]$.  Hence Theorem~\ref{MacMahon} is equivalent to the claim that the minuscule poset $[k] \times [n]$ is Gaussian.  
\end{rem} 

\begin{df} \rm
A \textit{symmetric plane partition} of an integer $x$ is a plane partition as defined in Definition~\ref{planePartition}, subject to the additional condition $x_{i, j} = x_{j, i}$ for all $i, j \geq 1$.  
\end{df}

As expected, there is a canonical bijection between the symmetric plane partitions inside $n \times n \times m$ and the order ideals of the poset $([n] \times [n])/S_2 \times [m]$.  The analogue to the MacMahon formula for symmetric plane partitions is called the Bender-Knuth formula (cf. \cite{Andrews}).  

\begin{thm}\label{benderKnuth} 
The generating function that counts symmetric plane partitions $\pi$ inside $n \times n \times m$ by their cardinality is as follows:
$$\sum_{\substack{{\pi \subset n \times n \times m} \\ { \pi \text{ symmetric}}}  } q^{|\pi|} = \prod_{i = 1}^n \left(\frac{[m+2i-1]_q}{[2i-1]_q} \prod_{h = i+1}^n \frac{[2(m+i+h-1)]_q}{[2(i+h-1)]_q}  \right). $$
\end{thm}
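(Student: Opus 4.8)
The plan is to prove the Bender--Knuth formula by reducing it, via the bijection recorded just above the statement, to a trace-refined enumeration over the minuscule poset $P = ([n]\times[n])/S_2$, and then to evaluate that enumeration by non-intersecting lattice paths. First I would observe that, under the canonical bijection between symmetric plane partitions inside $n\times n\times m$ and order ideals of $P\times[m]$, the volume statistic $|\pi| = \sum_{i,j}x_{i,j}$ does \emph{not} pull back to the cardinality of the order ideal: a cube lying on the main diagonal $i=j$ contributes $1$, whereas an off-diagonal cube is paired with its mirror image and the resulting orbit contributes $2$. Hence the left-hand side equals a \emph{weighted} rank-generating function in which the elements of $P \times [m]$ coming from diagonal cells carry weight $1$ and those coming from off-diagonal cells carry weight $2$. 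In particular this is not the ordinary rank-generating function $J(P\times[m];q)$, so the Gaussian property of $P$ does not apply verbatim; rather, the target product is visibly the $S_2$-folded analogue of the MacMahon product of Theorem~\ref{MacMahon}: the $n$ diagonal cells $(i,i)$ contribute their ordinary factors $\frac{[m+2i-1]_q}{[2i-1]_q}$, while each antidiagonal pair $\{(i,h),(h,i)\}$ with $i<h$ fuses into a single factor $\frac{[2(m+i+h-1)]_q}{[2(i+h-1)]_q}$ with doubled exponents.

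To evaluate the weighted sum, I would encode a symmetric plane partition by its diagonal slices, obtaining a family of lattice paths that is invariant under reflection across the main diagonal, and apply the Pfaffian form of the Lindstr\"om--Gessel--Viennot lemma (de Bruijn, Stembridge) for self-reflective path families. Reflection across the diagonal becomes reflection of the paths against a wall, and the weighted count becomes the Pfaffian of an antisymmetric matrix whose entries are weighted counts of single (possibly wall-touching) paths; these entries are ratios of $q$-factorials. I would then evaluate this Pfaffian in closed form and match the result to the stated product.

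The hard part will be the Pfaffian evaluation together with the attendant $q$-factorial bookkeeping: it is precisely the contribution of the diagonal wall that must produce the doubled exponents $[2(m+i+h-1)]_q/[2(i+h-1)]_q$ for the off-diagonal pairs while leaving the diagonal factors $[m+2i-1]_q/[2i-1]_q$ undoubled, and arranging for this factor of two to land on exactly the correct indices is the conceptual crux. I would attempt the evaluation either by invoking a known minor/Pfaffian identity or, failing a clean reference, by inducting on $n$---peeling off the outermost hook of the shifted staircase underlying $P$---and reconciling each step with a $q$-binomial identity. Since this is the classical Bender--Knuth identity, the entire argument may alternatively be replaced by a citation to Andrews, or to Macdonald's symmetry-class formula; the latter derives the same product as a principal specialization of an orthogonal character via the Weyl denominator formula, which fits the representation-theoretic spirit of the preceding sections.
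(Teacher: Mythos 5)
The first thing to know is that the paper contains no proof of this theorem at all: it is quoted as the classical Bender--Knuth formula with a bare citation to Andrews. So your closing remark---that the entire argument may be replaced by a citation to Andrews or to Macdonald---coincides exactly with the paper's own treatment. Everything before that is a genuinely different route, namely an actual proof sketch: fold the symmetric plane partition onto its diagonal half, encode it by a reflection-invariant family of nonintersecting lattice paths, and evaluate the resulting Pfaffian in the de Bruijn--Stembridge style. That strategy is classical and workable, but as written it is only a plan: the step you yourself single out as the crux---the Pfaffian evaluation that must produce the doubled factors $[2(m+i+h-1)]_q/[2(i+h-1)]_q$ off the diagonal while leaving the diagonal factors $[m+2i-1]_q/[2i-1]_q$ undoubled---is deferred, so the sketch on its own is not a complete proof. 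What the citation buys is brevity; what your sketch would buy, if completed, is a self-contained combinatorial proof in the spirit of the paper's Sections 7--10.

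Your opening observation deserves emphasis, because it is correct and is in fact sharper than the paper's own commentary. Under the canonical bijection with order ideals of $([n]\times[n])/S_2\times[m]$, the volume $|\pi|$ pulls back to exactly the weighted statistic you describe (diagonal boxes count once, off-diagonal $S_2$-orbits count twice); equivalently, the Bender--Knuth product is the MacMahon/Gaussian product of Theorem~\ref{MacMahon} with $q$ replaced by $q^2$ in the off-diagonal factors, and it is \emph{not} the rank-generating function $J(([n]\times[n])/S_2\times[m];q)$. The Remark that follows the theorem in the paper, asserting that the Bender--Knuth formula is that rank-generating function (equivalently, that it expresses Gaussianness of $([n]\times[n])/S_2$), is incorrect as literally stated: already for $n=2$, $m=1$ the right-hand side of the theorem is $1+q+q^3+q^4$, whereas the cardinality generating function is $1+q+q^2+q^3$. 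Both identities are true, but they are different statements, and it is the Gaussian property (Proctor), not Theorem~\ref{benderKnuth} as stated, that yields the formula $\binom{2n+1}{n}_q$ invoked in Section 9. So do not be deterred by the apparent conflict between your first paragraph and the paper---on this point your proposal has it right.
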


\begin{rem}\rm
By similar reasoning, the Bender-Knuth formula is the rank-generating function for the poset of order ideals of $([n] \times [n])/S_2 \times [m]$.  Hence Theorem~\ref{benderKnuth} is equivalent to the claim that the minuscule poset $([n] \times [n])/S_2$ is Gaussian.  
\end{rem} 

\section{Proof of Theorem~\ref{main2} for the First Infinite Family}\label{mn2}


\subsection{Proof of Theorem~\ref{main2}}

In the REU report \cite{RushShi}, we presented a proof of Theorem~\ref{main2} for the first infinite family by direct analysis of a collection of \textit{bracket sequences} introduced by Cameron and Fon-Der-Flaass, which are shown in \cite{CameronFonderflaass} to be in bijection with the order ideals of $[k] \times [n] \times [2]$.  Inspired by our work, Striker and Williams obtained a bijection in \cite{Jessica} between these bracket sequences and non-crossing partitions of $\lbrace 1, 2, \ldots, k+n+1 \rbrace$ into $k+1$ parts.  In Theorem 7.2 of Reiner-Stanton-White \cite{Reiner}, it is proven that the latter exhibits the cyclic sieving phenomenon with respect to the $q$-analogue of the Narayana number, which is the same as the $q$-analogue of the MacMahon formula.  It follows that Theorem~\ref{main2} holds in this case.  \qed 

In light of the Striker-Williams simplifications, we omit our proof in this article.  However, we remain hopeful that the techniques we used to analyze the bracket sequences can lead to a direct approach to establishing the cyclic sieving phenomenon in more complicated posets, e.g., $([n] \times [n])/S_2 \times [3]$, for which we conjecture that the cyclic sieving phenomenon holds.  The original proof may still be found online in \cite{RushShi}.


\subsection{Substituting Roots of Unity into the MacMahon Formula} \label{plugRoots1}

Knowing that the desired cyclic sieving phenomenon holds in the case $[k] \times [n] \times [2]$, we investigate some of its predictions.  In \cite{RushShi}, this data was used to verify that the cyclic sieving phenomenon holds for the first infinite family, but here its primary purpose is actually to help verify that the cyclic sieving phenomenon holds for the second infinite family, which will be shown in section~\ref{second}.  

Letting $m = 2$ in Theorem~\ref{MacMahon}, we see that the rank-generating function for $J([k] \times [n] \times [2])$ becomes 
$$ J([k] \times [n] \times [2]; q) =  \begin{bmatrix} k + n + 1 \\ n \end{bmatrix}_q \begin{bmatrix} k + n \\ n \end{bmatrix}_q\frac{[1]_q}{[n+1]_q}. $$

\begin{lem}\label{rootEval}
Let $n = n'd + r$ and $k = k'd + s$, where $0 \leq r, s \leq d-1$. Then 
$$\begin{bmatrix}n \\ k \end{bmatrix}_{q = e^{\frac{2 \pi i}{d}}} = \binom{n'}{k'} \begin{bmatrix}r \\ s \end{bmatrix}_{q = e^{\frac{2 \pi i }{d}}}$$
\end{lem}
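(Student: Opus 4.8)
The statement is the $q$-analogue of Lucas's theorem, and the plan is to prove it by extracting coefficients from a generating function in which $q$-binomial coefficients appear naturally. The starting point is the finite $q$-binomial theorem
$$\prod_{i=0}^{N-1}(1 + q^i t) = \sum_{j \geq 0} q^{\binom{j}{2}} \begin{bmatrix} N \\ j \end{bmatrix}_q t^j,$$
which I would take as known (it is a standard consequence of the $q$-Pascal recurrence). Writing $\zeta = e^{2\pi i/d}$, the idea is to specialize $q = \zeta$ and $N = n$ and exploit the fact that the factors $1 + \zeta^i t$ are periodic in $i$ with period $d$, since $\zeta^i$ depends only on $i \bmod d$.

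First I would evaluate a single period by substituting $x = -1/t$ into $\prod_{i=0}^{d-1}(x - \zeta^i) = x^d - 1$, which gives
$$\prod_{i=0}^{d-1}(1 + \zeta^i t) = 1 - (-1)^d t^d.$$
Using $n = n'd + r$ with $0 \leq r \leq d-1$, the product over $i = 0, \ldots, n-1$ then splits into $n'$ complete periods and one partial run of length $r$:
$$\prod_{i=0}^{n-1}(1 + \zeta^i t) = \bigl(1 - (-1)^d t^d\bigr)^{n'} \prod_{i=0}^{r-1}(1 + \zeta^i t).$$
Next I would expand both sides as power series in $t$ and compare coefficients of $t^{k}$ for $k = k'd + s$. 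The left side, via the $q$-binomial theorem at $q = \zeta$, contributes $\zeta^{\binom{k}{2}} \begin{bmatrix} n \\ k \end{bmatrix}_\zeta$. The right side contributes $\binom{n'}{k'}(-1)^{k'(d+1)}$ from the binomial power (only the $t^{dk'}$ term survives) times $\zeta^{\binom{s}{2}} \begin{bmatrix} r \\ s \end{bmatrix}_\zeta$ from the partial run (only the $t^s$ term, which also correctly vanishes when $s > r$, matching $\begin{bmatrix} r \\ s \end{bmatrix}_\zeta = 0$). Dividing by $\zeta^{\binom{k}{2}}$ yields the claimed identity up to the phase factor $(-1)^{k'(d+1)} \zeta^{\binom{s}{2} - \binom{k'd + s}{2}}$.

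The hard part — and the only genuine computation — will be verifying that this phase factor equals $1$. Using the identity $\binom{k'd+s}{2} - \binom{s}{2} = \tfrac{1}{2}k'd(k'd + 2s - 1)$, the factor becomes $(-1)^{k'(d+1)} \zeta^{-k'd(k'd+2s-1)/2}$, and I would resolve it by splitting on the parity of $d$. When $d$ is odd the sign is trivial, and since $k'd + 2s - 1 \equiv k' - 1 \pmod 2$ the product $k'(k'd + 2s - 1)$ is even, so the exponent is an integer multiple of $d$ and $\zeta$ raised to it is $1$. When $d$ is even, $k'd + 2s - 1$ is odd and $\zeta^{d/2} = -1$, whence $\zeta^{-k'd(k'd+2s-1)/2} = (-1)^{k'}$, which exactly cancels the sign $(-1)^{k'(d+1)} = (-1)^{k'}$. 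In either case the phase collapses to $1$, completing the argument. As an alternative route one could induct on $n$ directly from the $q$-Pascal recurrence together with $\zeta^d = 1$, but one is still forced to track the same parity bookkeeping, so I expect the generating-function argument to be the cleaner path.
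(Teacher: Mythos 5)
Your proof is correct, but it differs from the paper in a basic way: the paper does not prove this lemma at all --- its entire ``proof'' is the citation ``See Proposition 2.1 in Guo--Zeng \cite{GuoZeng}'', where the statement appears as the $q$-Lucas theorem. What you wrote is a complete, self-contained derivation, and every step checks out: the Gauss identity $\prod_{i=0}^{N-1}(1+q^it)=\sum_{j\geq 0} q^{\binom{j}{2}}\begin{bmatrix}N\\ j\end{bmatrix}_q t^j$, the cyclotomic evaluation $\prod_{i=0}^{d-1}(1+\zeta^i t)=1-(-1)^dt^d$, the splitting into $n'$ full periods plus a partial run of length $r$, and the coefficient extraction at $t^{k'd+s}$, where the congruence constraint forces the unique pairing of the $t^{dk'}$ and $t^s$ terms (and correctly produces $0$ when $s>r$, matching $\begin{bmatrix}r\\ s\end{bmatrix}_\zeta=0$). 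I also verified your phase bookkeeping, which is indeed the only delicate point: $\binom{k'd+s}{2}-\binom{s}{2}=\tfrac12 k'd(k'd+2s-1)$ is right; for odd $d$ the product $k'(k'd+2s-1)$ is even, so the exponent is an integer multiple of $d$ and the sign $(-1)^{k'(d+1)}$ is trivial; for even $d$ the identity $\zeta^{d/2}=-1$ gives $\zeta^{-k'd(k'd+2s-1)/2}=(-1)^{k'}$, which cancels $(-1)^{k'(d+1)}=(-1)^{k'}$. In effect you have reconstructed the standard proof of the $q$-Lucas theorem (essentially the argument underlying the Guo--Zeng proposition, going back to Olive and D\'esarm\'enien), so your route is ``different'' only in that it supplies the proof the paper outsources: it buys self-containedness at the cost of a page of computation, whereas the paper's citation keeps the exposition focused on the cyclic sieving application.
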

\begin{proof} 
See Proposition 2.1 in Guo-Zeng \cite{GuoZeng}.  
\end{proof}

\begin{prop} \label{ddivides}
If $\ell$ is a proper divisor of $k + n + 1$, let $d = \frac{k + n + 1}{\ell}$.  Then there are no orbits of order $\ell$ unless $d|n$ or $d|n+1$.   
\end{prop}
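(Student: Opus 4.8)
The plan is to translate the statement about orbit sizes into a fixed-point count and then evaluate that count using the cyclic sieving phenomenon together with Lemma~\ref{rootEval}. Since the cyclic sieving phenomenon has already been established for $J([k] \times [n] \times [2])$ via the proof of Theorem~\ref{main2} in this section, and the relevant action $\langle \Psi \rangle$ has order $N = k+n+1$, the number of order ideals fixed by $\Psi^\ell$ is exactly $J([k] \times [n] \times [2]; q)$ evaluated at $q = \zeta^\ell$, where $\zeta$ is a primitive $N$th root of unity. Because $\zeta^\ell$ is a primitive $d$th root of unity for $d = (k+n+1)/\ell$, I would set $q = e^{2\pi i /d}$ and aim to show that this evaluation vanishes whenever $d \nmid n$ and $d \nmid n+1$. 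An evaluation of $0$ means $\Psi^\ell$ has no fixed points at all, hence there are no orbits whose size divides $\ell$, and in particular none of size exactly $\ell$; this is precisely the contrapositive of the proposition.

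For the evaluation I would write $n = n'd + r$ with $0 \le r \le d-1$ and record that ``$d \nmid n$ and $d \nmid n+1$'' is equivalent to $0 < r < d-1$ (note this forces $d \ge 3$, so the boundary divisor $d = 2$ is handled vacuously). Using the displayed formula
$$ J([k] \times [n] \times [2]; q) = \begin{bmatrix} k + n + 1 \\ n \end{bmatrix}_q \begin{bmatrix} k + n \\ n \end{bmatrix}_q \frac{[1]_q}{[n+1]_q}, $$
and the identity $k+n+1 = \ell d$, I would apply Lemma~\ref{rootEval} to each Gaussian binomial at $q = e^{2\pi i/d}$. The first factor becomes $\binom{\ell}{n'} \begin{bmatrix} 0 \\ r \end{bmatrix}_{q}$, which vanishes precisely because $r > 0$. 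The remaining factors must then be checked to be finite and nonzero: the second Gaussian binomial is a polynomial and causes no trouble, while the denominator $[n+1]_q$ is nonzero exactly because $r < d-1$ forces $n+1 \not\equiv 0 \pmod{d}$, whence $q^{n+1} \neq 1$. Thus the product is an honest $0$ (vanishing numerator over nonzero denominator), and the fixed-point count is $0$, as desired.

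The main obstacle, and the reason the two excluded congruence conditions appear at all, is the potential $0 \cdot \infty$ indeterminacy in the product formula: the first Gaussian binomial vanishes whenever $r > 0$ (that is, whenever $d \nmid n$), but when one additionally has $d \mid n+1$ the factor $[n+1]_q$ in the denominator also vanishes, so the rational-function formula degenerates to $0/0$ and the clean conclusion fails. This is exactly why the case $d \mid n+1$ is permitted in the statement rather than excluded; since $J([k]\times[n]\times[2];q)$ is genuinely a polynomial its value there is still well defined, but extracting it would require a limiting computation that I would avoid, as the proposition only asks for vanishing in the complementary range $0 < r < d-1$, where no indeterminacy arises. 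I would also dispatch the harmless bookkeeping that $n' \le \ell - 1$ (so that $\binom{\ell}{n'}$ and $\binom{\ell-1}{n'}$ are legitimate binomial coefficients), which follows immediately from $n < k+n+1 = \ell d$.
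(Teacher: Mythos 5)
Your proposal is correct and follows essentially the same route as the paper: evaluate the $m=2$ MacMahon formula at a primitive $d$th root of unity, apply Lemma~\ref{rootEval} to $\begin{bmatrix} k+n+1 \\ n \end{bmatrix}_q$ to get the vanishing from $d \nmid n$, and conclude via the cyclic sieving phenomenon that $\Psi^\ell$ has no fixed points. In fact you are slightly more careful than the paper's own write-up, which leaves implicit both the CSP fixed-point translation and the role of the hypothesis $d \nmid n+1$ in keeping the factor $[n+1]_q$ in the denominator nonzero.
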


\begin{proof}

Let $q := \left(e^{\frac{2 \pi i}{m+n+1}}\right)^\ell$ be a primitive $d^{\text{th}}$ root of unity.  Expanding the MacMahon formula, we have the following: 
$$\begin{bmatrix} k + n + 1 \\ n \end{bmatrix}_q \begin{bmatrix} k + n \\ n \end{bmatrix}_q  \frac{[1]_q}{[n+1]_q} = \left(\frac{[k+n+1]_q \cdots [k+2]_q}{[n]_q \cdots [1]_q} \right) \left(\frac{[k+n]_q \cdots [k+1]_q}{[n]_q \cdots [1]_q} \right) \frac{[1]_q}{[n+1]_q}. $$
Suppose $d \nmid n$ and $d \nmid n+1$; then, since $d | k + n + 1$, by Lemma ~\ref{rootEval}, it follows that 
$$\begin{bmatrix} k+n+1 \\ n \end{bmatrix}_{q = e^{\frac{2 \pi i }{d}}} = \binom{\ell}{\ell'} \begin{bmatrix} r \\ r' \end{bmatrix}_{q = e^{\frac{ 2 \pi i }{d}}}, $$
where $k + n + 1 = \ell d + r$, $n = \ell' d + r'$, and $0 \leq r, r' \leq d-1$.  Since $d | k + n + 1$ and $d \nmid n$, it should be clear that $r = 0$ and $r' > 0$.  Therefore, the expression evaluates to 0, as desired. 
\end{proof} 

\subsection{What happens in $[k] \times [n] \times [m]$ when $m \geq 3$?  }
It has been verified via Dilks's Maple code that cyclic sieving does not occur in the poset $[3] \times [3] \times [3]$.  Furthermore, the order of the Fon-Der-Flaass action is 33 for the poset $[4] \times [4] \times [4]$, so, in particular, it is not true in general that the order of the Fon-Der-Flaass action is $k+n+m-1$ for the poset $[k] \times [n] \times [m]$.  However, it is conjectured by Cameron and Fon-Der-Flaass that if $k+n+m-1$ is prime, then the order of the Fon-Der-Flaass action is divisible by $k+n+m-1$, and they have proved this in \cite{CameronFonderflaass} for all posets in which $m$ exceeds $(k-1)(n-1)$.  

\section{Proof of Theorem~\ref{main2} for the Second Infinite Family} \label{second}

In this section, we obtain the fact that the cyclic sieving phenomenon holds for all posets of the form $([n] \times [n])/S_2 \times [2]$ as a consequence of the fact that the cyclic sieving phenomenon holds for all posets of the form $[n] \times [n] \times [2]$.

Note that the Fon-Der-Flaass action on ordinary plane partitions viewed as order ideals of $[n] \times [n] \times [2]$ restricts to the Fon-Der-Flaass action on symmetric plane partitions viewed as order ideals of $([n] \times [n])/S_2 \times [2]$.  Hence the order of $\Psi$ on order ideals of $([n] \times [n])/S_2 \times [2]$ divides $2n+1$.  Since the orbit of the empty order ideal is of length $2n+1$, it follows that the order of $\Psi$ is in fact equal to $2n+1$.  

From the Bender-Knuth formula (Theorem~\ref{benderKnuth}), we see that the rank-generating function for $J(([n] \times [n])/S_2 \times [2])$ is $\begin{bmatrix} 2n+1 \\ n \end{bmatrix}_q$.  It follows by Lemma~\ref{rootEval} that if $q$ is a $(2n+1)^{\text{th}}$ root of unity, substituting $q$ into the Bender-Knuth expression yields 0 unless $q = 1$.  Since we know that substituting $q=1$ gives the total number of order ideals of $([n] \times [n])/S_2 \times [2]$, it suffices to show that all the orbits of order ideals of $([n] \times [n])/S_2 \times [2]$ are free orbits of length $2n+1$. 

Assume for the sake of contradiction that there exists an orbit of length $\frac{2n+1}{d}$, where $d > 1$.  As discussed, this orbit must also arise in the case $[n] \times [n] \times [2]$.  However, it follows from Proposition~\ref{ddivides} that $d|n$ or $d| n+1$.  Since $\gcd(n, 2n+1) = \gcd(n+1,2n+1) =1$, this contradicts the assumption $d>1$, so we may conclude that all the orbits are free orbits of length $2n+1$, as desired.  \qed

\subsection{What happens in $([n] \times [n])/S_2 \times [m]$ when $m \geq 3$?  }
It has been verified via Dilks's Maple code that cyclic sieving does not occur in the poset $([6] \times [6])/S_2 \times [4]$.  However, every poset of the form $([n] \times [n])/S_2 \times [3]$ that we tested was found to obey the cyclic sieving phenomenon, so it is tempting to conjecture that the cyclic sieving phenomenon holds for all such posets.    

\section{Proof of Theorem~\ref{main2} for the Third Infinite Family}

Remarkably, the following theorem is true.  

\begin{thm}
For all positive integers $r$, if $P := J^{n-3}([2] \times [2])$ is a minuscule poset belonging to the third infinite family, then the triple $(J(P \times [m]), J(P \times [m];q), \Psi)$ exhibits the cyclic sieving phenomenon for all positive integers $m$.  
\end{thm}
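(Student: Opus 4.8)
The plan is to exploit the exceptionally simple shape of the third-family posets; throughout I write $n$ for the rank parameter (so $P$ is the minuscule poset of the spin representation of $D_n$). First I would identify $P = J^{n-3}([2]\times[2])$ explicitly. Iterating the order-ideal functor on $[2]\times[2]$ and arguing by induction, one sees that $J^{k}([2]\times[2])$ is the \emph{double-tailed diamond}: the ranked poset carrying a single element in each of the ranks $0,1,\dots,k$ and $k+2,\dots,2k+2$, together with two incomparable elements in the middle rank $k+1$. Setting $k = n-3$, the poset $P$ has $2n-2$ elements, occupies ranks $0$ through $2n-4$, and agrees with the chain $[2n-3]$ except for the doubled rank $n-2$.

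Because $P$ is Gaussian (Lemma~\ref{minusculeRanked} and the remarks following Definition~\ref{Gaussian}), the rank-generating function can be read directly off the rank multiset. The values $r(p)+1$ range over $1,2,\dots,2n-3$ once each, with one extra occurrence of $n-1$ coming from the diamond, so collecting that extra factor separately in the Gaussian product yields
$$
J(P\times[m];q)=\frac{1-q^{m+n-1}}{1-q^{n-1}}\begin{bmatrix} m+2n-3 \\ 2n-3\end{bmatrix}_q.
$$
This is exactly the product-of-chains generating function $\begin{bmatrix} m+2n-3 \\ 2n-3\end{bmatrix}_q$ of $[2n-3]\times[m]$, corrected by a single cyclotomic ratio contributed by the diamond. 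I would stress that this correction is the whole point: the bare product of three chains fails cyclic sieving for $m\geq 3$ (as in $[3]\times[3]\times[3]$), so the diamond is genuinely responsible for CSP surviving uniformly in $m$, and the proof cannot reduce to the product-of-chains case.

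Next I would pin down the order of $\Psi$ to be $N = m+2n-3$. (This is consistent with the rest of the paper: for $m=1$ it is the Coxeter number $2n-2$ of $D_n$ predicted by Theorem~\ref{main1}, and for $n=3$, where the third family collapses into the first, it recovers the first-family value $m+3$.) To control both the order and the fixed-point data at once, I would realize order ideals of $P\times[m]$ concretely as order-reversing maps $f\colon P\to\{0,1,\dots,m\}$, which for the double-tailed diamond reduce to a weakly decreasing sequence along the bottom tail, a weakly decreasing sequence along the top tail, and two independent middle values sandwiched between them. Encoding this data as a family of bounded objects carrying a cyclic symmetry of order $N$, I would then use Theorem~\ref{CameronLemma1} (toggling in order of increasing rank) to show that $\Psi$ is conjugate in the toggle group to the corresponding rotation; exhibiting one free orbit (for instance the orbit of the empty ideal) gives $N \mid \operatorname{ord}(\Psi)$, while the rotation model gives the reverse divisibility.

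Finally, cyclic sieving follows by matching the two sides. On the polynomial side, since $J(P\times[m];q)$ is an explicit ratio of cyclotomic factors, its value at a primitive $\bigl(N/\gcd(N,d)\bigr)$-th root of unity is computed mechanically through the $q$-Lucas evaluation of Guo--Zeng (Lemma~\ref{rootEval}), by counting the excess of vanishing factors in the numerator over the denominator. On the combinatorial side, the $\Psi^{d}$-fixed ideals are the rotation-invariant objects in the model, which I would enumerate directly. The main obstacle is precisely this matching for the doubled middle rank: the two incomparable diamond elements can be interchanged by the rotation and must be tracked separately from the two tails, so the delicate step is to prove that the diamond contributes exactly the factor $\dfrac{1-q^{m+n-1}}{1-q^{n-1}}$ at \emph{every} root of unity. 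Verifying this, and thereby that the fixed-point counts agree with the evaluations for all $d$, is the crux of the argument.
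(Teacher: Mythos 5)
Your preliminaries are correct and consistent with the paper: $J^{n-3}([2]\times[2])$ is indeed the double-tailed diamond with $2n-2$ elements and doubled rank $n-2$; the Gaussian product does collapse to $\frac{1-q^{m+n-1}}{1-q^{n-1}}\begin{bmatrix} m+2n-3 \\ 2n-3\end{bmatrix}_q$; and $m+2n-3$ is the right target for the order of $\Psi$. But what you have written is a plan whose load-bearing step is absent. The entire content of the theorem lies in (i) producing an \emph{explicit} combinatorial model of $J(P\times[m])$ on which $\Psi$ (or a toggle-group conjugate of it) acts as a tractable cyclic operation, and (ii) enumerating the $\Psi^d$-fixed ideals and matching them against the root-of-unity evaluations. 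You describe (i) only as ``encoding this data as a family of bounded objects carrying a cyclic symmetry of order $N$'' --- no such objects are defined, and the claim that rank-by-rank toggling realizes a rotation on them is asserted, not proved (Theorems~\ref{CameronLemma1} and \ref{CameronLemma2} give conjugacy among different toggle orders, not conjugacy to an abstract rotation on an undefined set) --- and you explicitly defer (ii) as ``the crux.'' Deferring the crux is a gap, not a proof: the same skeleton (Gaussian polynomial, $q$-Lucas evaluation, a hoped-for rotation model) is available for $[3]\times[3]\times[3]$, where the cyclic sieving phenomenon \emph{fails}, so whether the fixed-point counts match the evaluations is a genuinely combinatorial fact that only the actual construction can settle.

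For comparison, the paper's proof (carried out in the cited REU report) supplies exactly the ingredient you omit: a bijection between order ideals of $J^{n-3}([2]\times[2])\times[m]$ and the Cameron--Fon-Der-Flaass bracket sequences that encode order ideals of $[k]\times[n]\times[2]$, together with a modified manipulation rule under which the action of $\Psi$ becomes analyzable, so that both the order of $\Psi$ and the fixed-point counts can be read off the sequences. This also undercuts your remark that the argument ``cannot reduce to the product-of-chains case'': the reduction is not to the CSP statement for three chains, but the combinatorial objects used are literally the ones from the $[k]\times[n]\times[2]$ case, reused with a different evolution rule. If you wish to complete your own route, the near-chain structure of the double-tailed diamond does make the order-reversing-map description workable, but you must actually write down the encoding, prove the conjugacy statement in the toggle group, and carry out the fixed-point enumeration at the doubled middle rank --- precisely the steps your proposal labels as the obstacle and leaves undone.
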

The proof of this result may be found in the REU report \cite{RushShi}.  It is accomplished by a bijection between order ideals of $J^{n-3}([2] \times [2]) \times [m]$ and the same Cameron Fon-Der-Flaass bracket sequences that arise in the case $[k] \times [n] \times [2]$.  To manipulate these bracket sequences, we devised a rule analogous to that in Cameron-Fon-Der-Flaass \cite{CameronFonderflaass} that differs in the details.   

\section{Proof of Theorem~\ref{main2} for the Exceptional Cases}

We verified via Dilks's code that, if $P$ is the first exceptional poset, the cyclic sieving phenomenon holds for the triple $(J(P \times [m]), J(P \times [m];q), \Psi)$ when $1 \leq m \leq 4$, and, if $P$ is the second exceptional poset, the cyclic sieving phenomenon holds for the triple $(J(P \times [m]), J(P \times [m];q), \Psi)$ when $1 \leq m \leq 3$.  For reference, in Table~\ref{exceptionalCases} we provide the data on the orbit structures corresponding to both exceptional posets for the cases $m=1$ and $m=2$.  This data is not required to establish Theorem~\ref{main1} because the proof of Theorem~\ref{mainTheorem} is uniform, but it is required to show that Theorem~\ref{main2} holds in the exceptional cases and not just in the infinite families.  

\begin{center}
\begin{table}[htp]
\begin{tabular}{|c|c|c|}
\hline
 & $P = J^2([2] \times [3])$ & $P = J^3([2] \times [3])$ \\  \hline
$m=1$ & $2 \times 12 + 1 \times 3$ & $3 \times 18 + 1 \times 2$ \\ \hline
 $m=2$ & $27 \times 13$ & $77 \times 19$ \\ \hline
\end{tabular}
\caption{In each entry, the table displays the number of Fon-Der-Flaass action orbits of each size that occur in the indicated poset of order ideals.  For instance, the poset of order ideals $J(J^3([2] \times [3]) \times [1])$ is composed of 3 orbits of order 18 and 1 orbit of order 2.}
\label{exceptionalCases}
\end{table}
\end{center}

\vspace{-0.3in}

\subsection{What happens in the exceptional cases for $m \geq 3$?}

It is tempting to propose the following conjecture.  

\begin{conj}
If $P$ is an exceptional minuscule poset, then the triple $(J(P \times [m]), J(P \times [m];q), \Psi)$ exhibits the cyclic sieving phenomenon for all positive integers $m$.  
\end{conj}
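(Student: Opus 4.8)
The plan is to follow the template that succeeded for the infinite families in Sections~\ref{second} and onward: exploit the fact that, since $P$ is Gaussian (Definition~\ref{Gaussian}), the polynomial $X(q) = J(P \times [m];q)$ is already available in the closed product form $\prod_{p \in P} \frac{1 - q^{m+r(p)+1}}{1 - q^{r(p)+1}}$, and reduce the cyclic sieving phenomenon to two tasks: (i) determining the order $n$ of $\Psi$ on $J(P \times [m])$, and (ii) matching each evaluation $X(\zeta^d)$ against the number of order ideals fixed by $\Psi^d$. Because $P$ is one of the two explicit posets $J^2([2]\times[3])$ and $J^3([2]\times[3])$, whose $16$ and $27$ elements carry known rank functions, $X(q)$ factors completely into cyclotomic pieces, so its evaluation at any root of unity is a finite computation once $n$ is known.

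The first task is to pin down the order of $\Psi$. The data in Table~\ref{exceptionalCases}, together with the behavior of the infinite families, strongly suggests that the order is $h + m - 1$, where $h$ is the Coxeter number of the associated root system ($h = 12$ for $E_6$ and $h = 18$ for $E_7$): for $m = 1$ the orbit sizes divide $h$, while for $m = 2$ every orbit is free of size $h+1$. This is consistent with the orders $k+n+1$ and $2n+1$ found for the first two families at $m=2$, whose Coxeter numbers are $k+n$ and $2n$. Establishing $\mathrm{ord}(\Psi) = h + m - 1$ uniformly in $m$ is the principal structural input; as in Section~\ref{second}, one would bound the order from below by tracking a single long orbit (e.g.\ that of the empty ideal) and from above by controlling the possible orbit sizes.

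Granting the order $n = h + m - 1$, one evaluates the product at $\zeta^d$ for $\zeta$ a primitive $n$th root of unity, using cyclotomic cancellation in the spirit of Lemma~\ref{rootEval} and Proposition~\ref{ddivides}, to show that most evaluations vanish and the nonvanishing ones record the surviving orbit sizes. The remaining, and hardest, step is to match these evaluations with the fixed-point counts of $\Psi^d$. In the second family this was done by embedding symmetric plane partitions inside ordinary plane partitions and invoking Proposition~\ref{ddivides}, but the exceptional posets admit no such ambient box model. The most natural substitute, staying within the methods of this paper, is to extend the bracket-sequence technique used for the third family: since both exceptional posets have the form $J^k([2]\times[3])$, one would seek a bijection between the order ideals of $J^k([2]\times[3]) \times [m]$ and a suitable family of bracket sequences equipped with a Fon-Der-Flaass rule, and then transport a known cyclic sieving phenomenon through it.

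The main obstacle is precisely this control of orbit structure for large $m$. The cautionary example is the product of three chains: cyclic sieving already fails for $[3]\times[3]\times[3]$, and the order of $\Psi$ on $[4]\times[4]\times[4]$ is $33$ rather than $h+m-1=11$, so the clean order formula collapses for products in general. Thus the crux of the conjecture is to show that the exceptional minuscule posets do \emph{not} inherit this pathology -- that $\mathrm{ord}(\Psi)$ remains $h+m-1$ and that, apart from the few short orbits already visible at $m=1$, the orbits stay free. I expect this to require either a genuinely new combinatorial model for the order ideals of $J^k([2]\times[3]) \times [m]$, or an algebraic lifting of the argument of Sections 2--6, perhaps by passing to piecewise-linear or birational rowmotion, whose order on minuscule products tends to be far more tractable than the combinatorial order.
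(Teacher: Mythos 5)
The statement you are addressing is not proved in the paper at all: it is posed as an open conjecture. The paper's entire treatment of the exceptional posets consists of computer verification (via Dilks's Maple code) that the cyclic sieving phenomenon holds for $J^2([2]\times[3])$ when $1 \leq m \leq 4$ and for $J^3([2]\times[3])$ when $1 \leq m \leq 3$, together with the orbit data in Table~\ref{exceptionalCases}; the authors then explicitly write that it is ``tempting to propose'' the general statement. So there is no proof in the paper against which your argument could be judged equivalent, and your proposal, by your own admission, is not a proof either: it is a program whose two load-bearing steps are left open. Concretely, (i) the order formula $\mathrm{ord}(\Psi) = h + m - 1$ is only supported by the $m = 1, 2$ data (where it does hold: $12$ and $13$ for $E_6$, $18$ and $19$ for $E_7$) and by analogy with the infinite families, and you give no mechanism to establish it for all $m$; and (ii) the fixed-point matching step has no actual construction behind it --- you gesture at extending the bracket-sequence technique of the third family, but the third-family argument in the paper rests on a specific bijection to Cameron--Fon-Der-Flaass bracket sequences whose existence for $J^k([2]\times[3]) \times [m]$ with $k = 2, 3$ is precisely what would need to be invented. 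Your own caveat about $[3]\times[3]\times[3]$ and $[4]\times[4]\times[4]$ shows why neither step can be waved through: Gaussianity of $P$ and a clean product formula for $J(P\times[m];q)$ are not sufficient, since $[3]\times[3]$ is Gaussian and minuscule yet the conjecture's analogue fails for it at $m = 3$.

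That said, your proposal is a reasonable research plan and is consistent with everything the paper knows: the heuristic $\mathrm{ord}(\Psi) = h + m - 1$ matches the tabulated data, the cyclotomic-evaluation strategy mirrors Lemma~\ref{rootEval} and Proposition~\ref{ddivides}, and your closing suggestion (a new combinatorial model, or a lifting of the algebraic machinery of sections 2--6) echoes the authors' own stated hope for a uniform resolution. But as a submission claiming to settle the statement, it has genuine gaps at exactly the points where the problem is hard, and those gaps are the reason the statement remains a conjecture in the paper.
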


\section{Acknowledgments}

This research was undertaken at the University of Minnesota, Twin Cities, under the direction of Profs. Victor Reiner, Gregg Musiker, and Pavlo Pylyavsky and with the financial support of the US National Science Foundation via grant DMS-1001933.  It is the authors' pleasure to extend their gratitude first and foremost to Prof. Reiner, not only for his extraordinary leadership of the REU (Research Experiences for Undergraduates) program hosted by the University of Minnesota, but also for his dedicated mentorship while this project was in progress and his continued support when it came time to ready the results for eventual publication.  The authors would also like to thank Jessica Striker and Nathan Williams for helpful conversations and a fruitful exchange of early drafts, Kevin Dilks for helpful conversations and the use of his Maple code for computations, and the two anonymous referees for all their constructive suggestions.  

\section{Appendix}

In this section, we choose six example root systems, one of each classical type as well as $E_6$ and $E_7$, and we present an illustration of every minuscule heap arising from a Lie algebra whose root system is among these.  The purpose is to provide an indication of all the possible shapes a minuscule heap may take on.      

\appendix
\section{The Case $A_n$}

For root systems of the form $A_n$, let $\alpha_j = \epsilon_{j+1}-\epsilon_j$ for all $1 \leq j \leq n$.  The possible minuscule weights are $\omega_1, \omega_2, \ldots, \omega_n$ (in other words, each fundamental weight may be minuscule), and the minuscule heaps arising from $A_4$ appear in Figure~\ref{figa}.    

\begin{figure}[htp]
\begin{center}
\subfigure[]{\includegraphics[scale = 0.5]{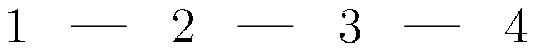} } \hspace{0.2in}
\subfigure[]{\includegraphics[scale = 0.5]{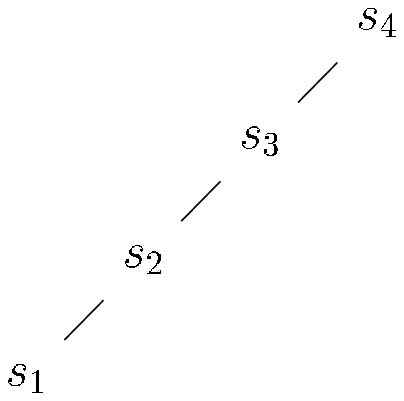} } \hspace{0.2in}
\subfigure[]{\includegraphics[scale = 0.5]{davidA4Two} } \hspace{0.2in}
\subfigure[]{\includegraphics[scale = 0.5]{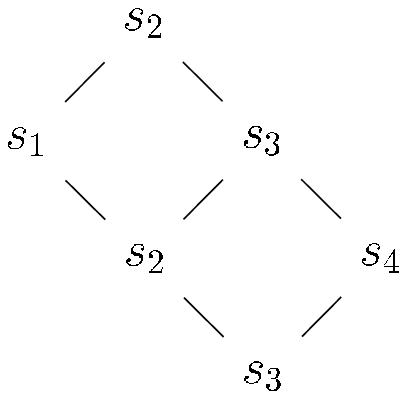} } \hspace{0.2in}
\subfigure[]{\includegraphics[scale = 0.5]{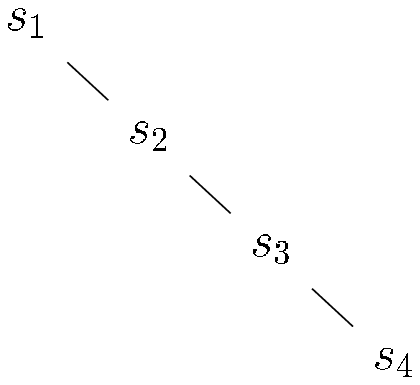} } 
\caption{Left to right: (a) the Dynkin diagram for root system $A_4$, (b) the heap $P_{w_0^J}$ for minuscule weight $\omega_1$, (c) the heap $P_{w_0^J}$ for minuscule weight $\omega_2$, (d) the heap $P_{w_0^J}$ for minuscule weight $\omega_3$, and (e) the heap $P_{w_0^J}$ for minuscule weight $\omega_4$.  }
\label{figa}
\end{center}
\vspace{0in}
\end{figure}

\section{The Case $B_n$}

For root systems of the form $B_n$, let $\alpha_1 = \epsilon_1$, and let $\alpha_j = \epsilon_{j}-\epsilon_{j-1}$ for all $2 \leq j \leq n$.  The only possible minuscule weight is $\omega_1$, and the minuscule heap arising from $B_4$ appears in Figure~\ref{figb}.  

\begin{figure}[htp]
\begin{center}
\subfigure[]{\includegraphics[scale = 0.6]{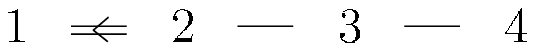} } \hspace{0.5in}
\subfigure[]{\includegraphics[scale = 0.6]{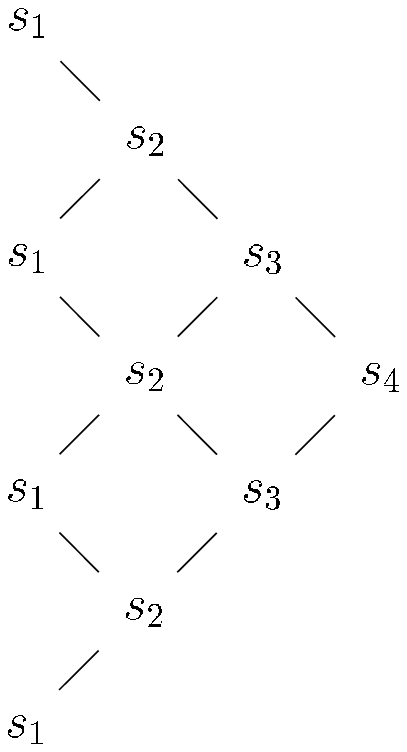} }
\caption{Left to right: (a) the Dynkin diagram for root system $B_4$ and (b) the heap $P_{w_0^J}$ for minuscule weight $\omega_1$.  }
\label{figb}
\end{center}
\vspace{0in}
\end{figure}

\section{The Case $C_n$}

For root systems of the form $C_n$, let $\alpha_1 = 2\epsilon_1$, and let $\alpha_j = \epsilon_{j}-\epsilon_{j-1}$ for all $2 \leq j \leq n$.  The only possible minuscule weight is $\omega_n$, and the minuscule heap arising from $C_5$ appears in Figure~\ref{figc}.  

\begin{figure}[htp]
\begin{center}
\subfigure[]{\includegraphics[scale = 0.6]{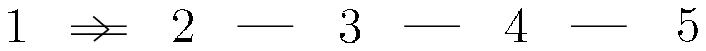} } \hspace{0.5in}
\subfigure[]{\includegraphics[scale = 0.6]{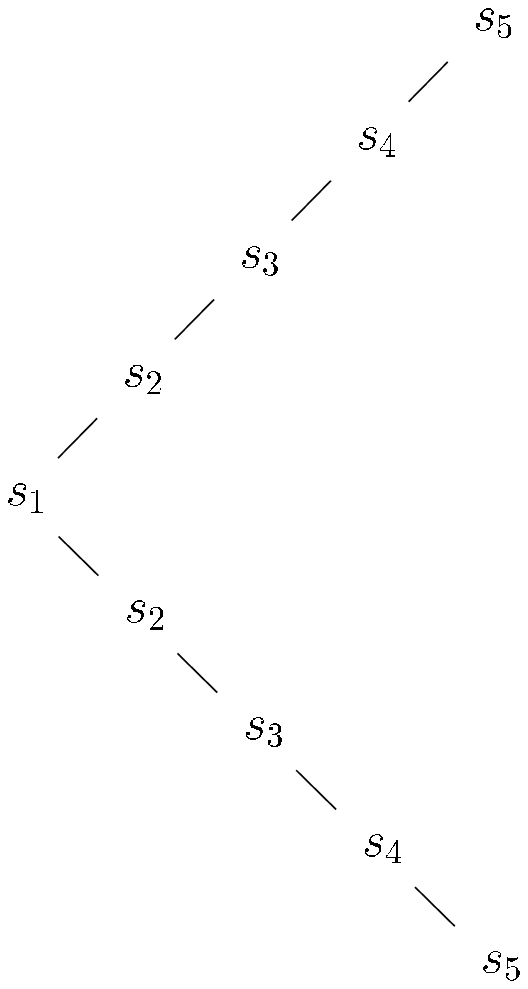} } 
\caption{Left to right: (a) the Dynkin diagram for root system $C_5$ and (b) the heap $P_{w_0^J}$ for minuscule weight $\omega_5$.  }
\label{figc}
\end{center}
\vspace{0in}
\end{figure}

\section{The Case $D_n$}

For root systems of the form $D_n$, let $\alpha_1 = \epsilon_1 + \epsilon_2$, and let $\alpha_j = \epsilon_{j}-\epsilon_{j-1}$ for all $2 \leq j \leq n$.  The only possible minuscule weights are $\omega_1$, $\omega_2$, and $\omega_n$, and the minuscule heaps arising from $D_5$ appear in Figure~\ref{figd1}.  

\begin{figure}[htp]
\begin{center}
\subfigure[]{\includegraphics[scale = 0.5]{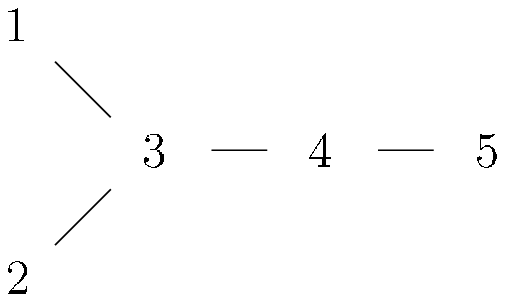} } \hspace{0.3in}
\subfigure[]{\includegraphics[scale = 0.5]{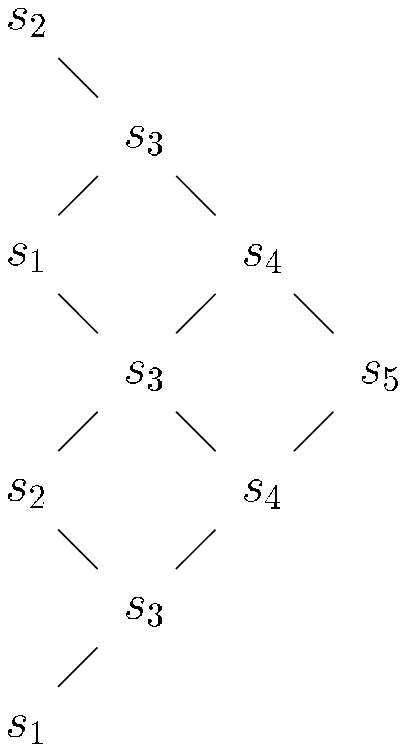} } \hspace{0.3in}
\subfigure[]{\includegraphics[scale = 0.5]{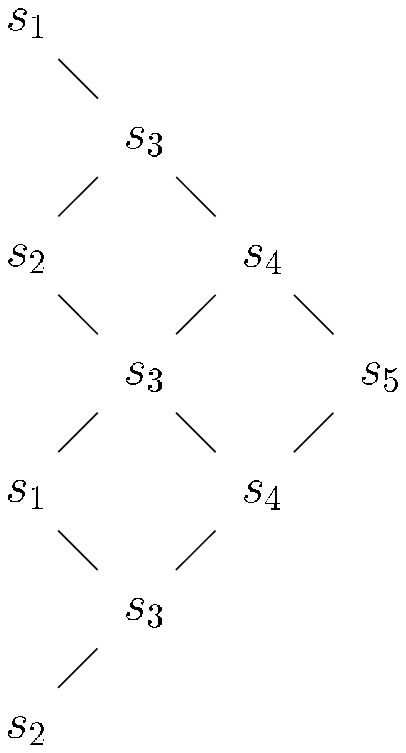} } \hspace{0.3in} 
\subfigure[]{\includegraphics[scale = 0.5]{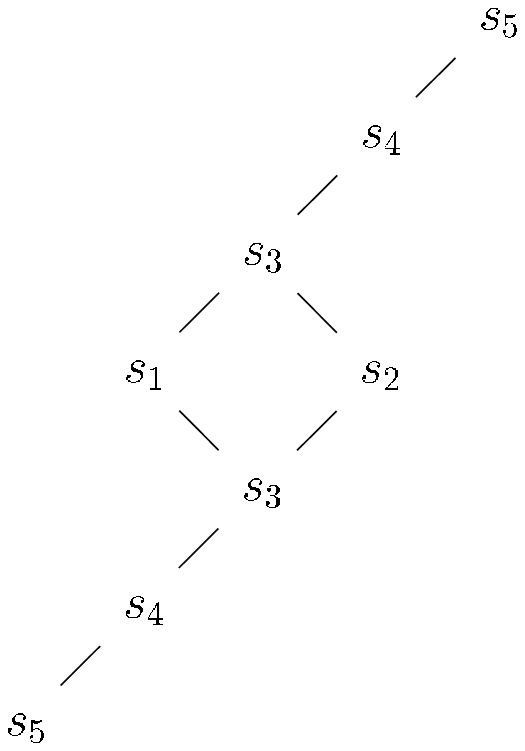} } 
\caption{Left to right: (a) the Dynkin diagram for root system $D_5$, (b) the heap $P_{w_0^J}$ for minuscule weight $\omega_1$, (c) the heap $P_{w_0^J}$ for minuscule weight $\omega_2$, and (d) the heap $P_{w_0^J}$ for minuscule weight $\omega_5$.  }
\label{figd1}
\end{center}
\vspace{0in}
\end{figure}

\section{The Exceptional Cases}
For root systems $E_6$ and $E_7$, let the simple roots be chosen to obey the relationships depicted in the Dynkin diagrams in Figures~\ref{fige6} and \ref{fige7}, respectively.  For the case $E_6$, the only possible minuscule weights are $\omega_1$ and $\omega_6$, and the corresponding heaps appear in Figure~\ref{fige6}.  For the case $E_7$, the only possible minuscule weight is $\omega_7$, and the corresponding heap appears in Figure~\ref{fige7}.    

\begin{figure}[htp]
\begin{center}
\subfigure[]{\includegraphics[scale = 0.5]{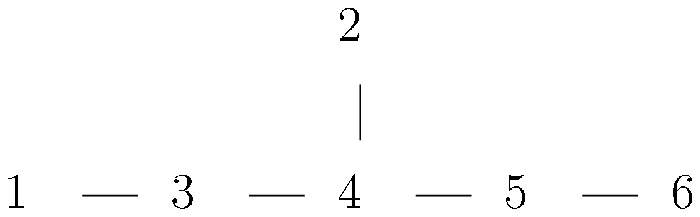} } \hspace{0.3in}
\subfigure[]{\includegraphics[scale = 0.5]{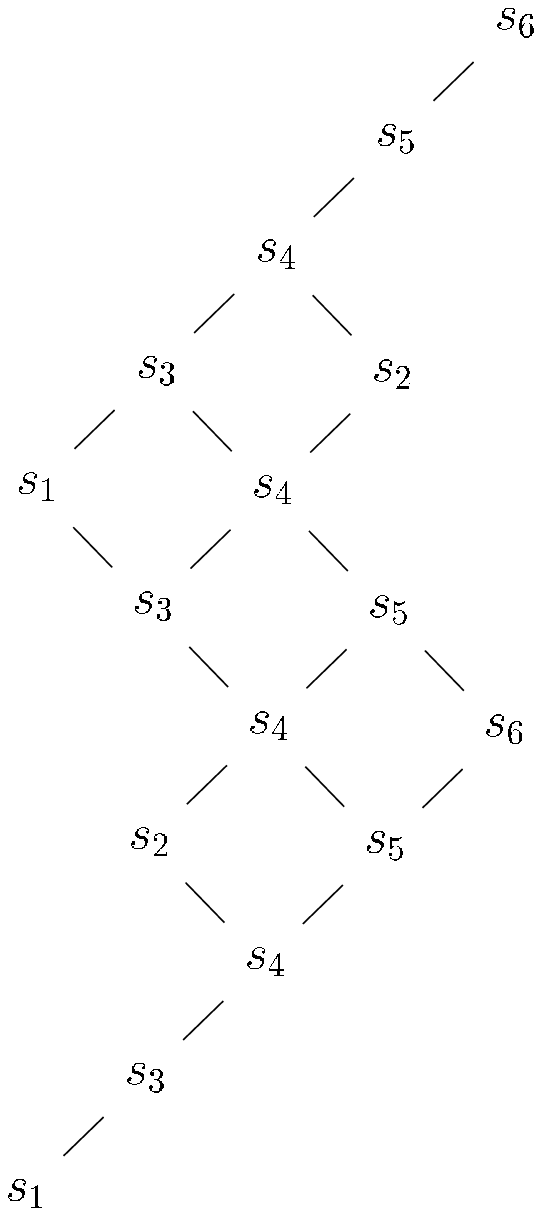} } \hspace{0.3in}
\subfigure[]{\includegraphics[scale = 0.5]{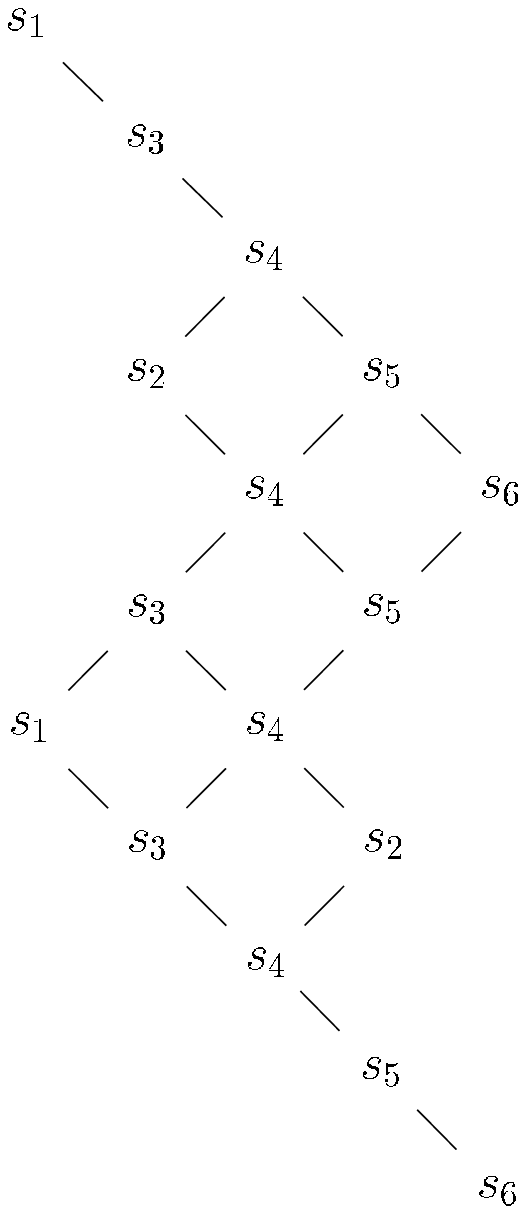} }
\caption{Left to right: (a) the Dynkin diagram for root system $E_6$, (b) the heap $P_{w_0^J}$ for minuscule weight $\omega_1$, and (c) the heap $P_{w_0^J}$ for minuscule weight $\omega_6$.  }
\label{fige6}
\end{center}
\vspace{0in}
\end{figure}

\begin{figure}[htp]
\begin{center}
\subfigure[]{\includegraphics[scale = 0.6]{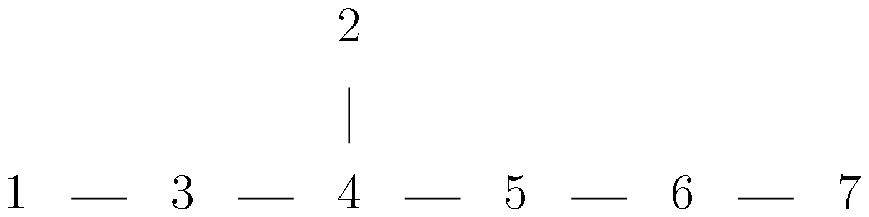} } \hspace{0.5in}
\subfigure[]{\includegraphics[scale = 0.6]{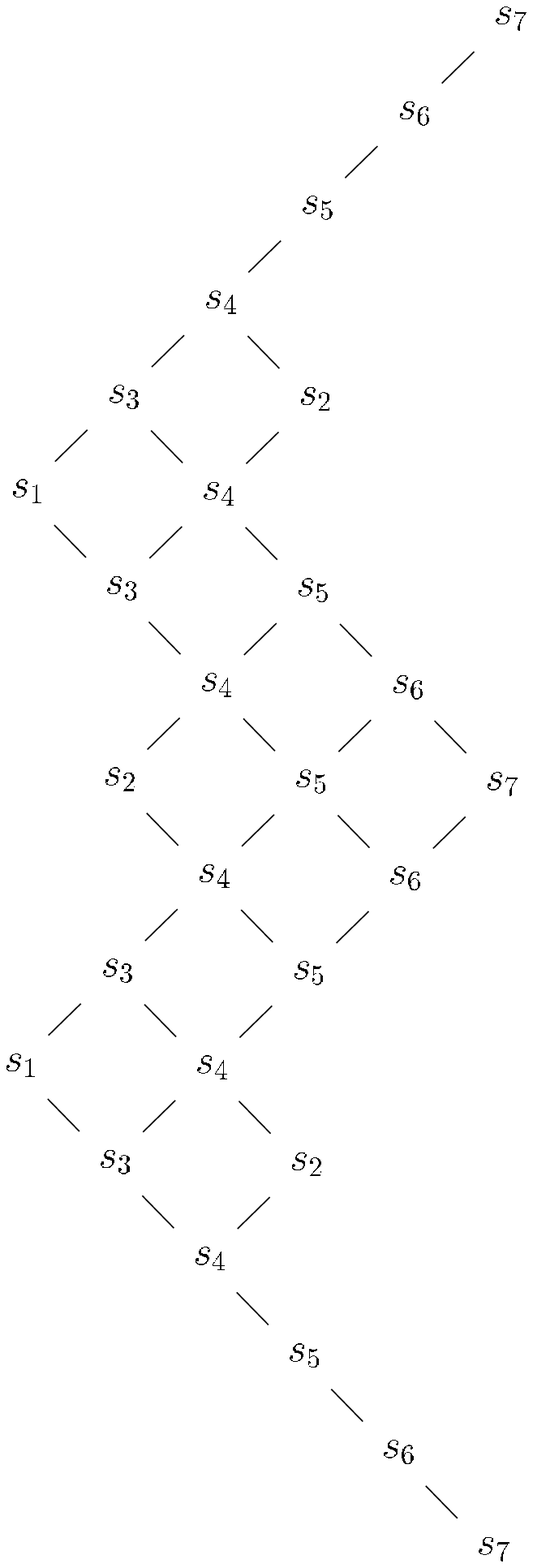} } 
\caption{Left to right: (a) the Dynkin diagram for root system $E_7$ and (b) the heap $P_{w_0^J}$ for minuscule weight $\omega_7$.  }
\label{fige7}
\end{center}
\vspace{0in}
\end{figure}

\end{document}